\newcommand{\nl}{\left|\left|} \newcommand{\nr}{\right|\right|}
\newcommand{\tl}{\textlatin} \newcommand{\lp}{\left(}
\newcommand{\rp}{\right)} \newcommand{\lb}{\left\lbrace}
\newcommand{\rb}{\right\rbrace} 
\definecolor{rred}{rgb}{0.7,0.0,0.2}
\definecolor{bblue}{rgb}{0.2,0.0,0.7}
\newcommand{\secref}[1]{Section \ref{sec:#1}}
\newtheorem{definition}{Definition}
\newtheorem{proposition}{Proposition}
\newtheorem{remark}{Remark}
\newtheorem{theorem}{Theorem}
\newtheorem{corollary}{Corollary}
\newcommand{\seclab}[1]{\label{sec:#1}}
\newcommand{\applab}[1]{\label{app:#1}}
\newcommand{\appref}[1]{App. \ref{app:#1}}
\newcommand{\eqlab}[1]{\label{eq:#1}}
\renewcommand{\eqref}[1]{(\ref{eq:#1})}
\newcommand{\eqsref}[2]{(\ref{eq:#1}) and~(\ref{eq:#2})}
\newcommand{\figref}[1]{Fig.~\ref{fig:#1}}
\newcommand{\figlab}[1]{\label{fig:#1}}
\newcommand{\propref}[1]{Proposition~\ref{proposition:#1}}
\newcommand{\proplab}[1]{\label{proposition:#1}}
\newcommand{\corref}[1]{Corollary~\ref{cor:#1}}
\newcommand{\corlab}[1]{\label{cor:#1}}
\newcommand{\defnref}[1]{Definition~\ref{definition:#1}}
\newcommand{\defnlab}[1]{\label{definition:#1}}
\newcommand{\remref}[1]{Remark~\ref{remark:#1}}
\newcommand{\remlab}[1]{\label{remark:#1}}
\newcommand{\thmref}[1]{Theorem~\ref{theorem:#1}}
\newcommand{\thmlab}[1]{\label{theorem:#1}}
\newcommand{\tablab}[1]{\label{tab:#1}}
\newcommand{\tabref}[1]{Table~\ref{tab:#1}}
\newcommand{\transv}{\mathrel{\text{\tpitchfork}}}
\newcommand{\tpitchfork}{%
  \vbox{
    \baselineskip\z@skip
    \lineskip-.52ex
    \lineskiplimit\maxdimen
    \m@th
    \ialign{##\crcr\hidewidth\smash{$-$}\hidewidth\crcr$\pitchfork$\crcr}
  }%
}
\title{Regularization and geometry of piecewise smooth systems with intersecting discontinuity sets}
\author{P. Kaklamanos and K. Uldall Kristiansen\footnote{P. Kaklamanos: School of Mathematics, University of Edinburgh, Bayes Centre, 47 Potterrow, Edinburgh, EH8 9BT, United Kingdom (P.Kaklamanos@sms.ed.ac.uk); K. Uldall Kristiansen: Department of Applied Mathematics and Computer Science, Technical University of Denmark, 2800
		Kgs. Lyngby, DK (krkri@dtu.dk)}}
\begin{document}
\maketitle 

\begin{abstract}
In this work, we study the dynamics of piecewise smooth systems on a {codimension-2 transverse intersection of two codimension-1 discontinuity sets}. The Filippov convention can be extended to such {intersections}, but this approach does not provide a unique sliding vector and, as opposed to the classical sliding vector-field on codimension-1 discontinuity manifolds, there is no agreed notion of stability in the codimension-2 context. From a modelling perspective, one may interpret this lack of determinacy as a fact that additional modelling is required; knowing the four adjacent vector-fields is not enough to define a unique forward flow. In this paper, we provide additional information to the system by performing a regularization of the piecewise smooth system, introducing two regularization functions and a small perturbation parameter. Then, based on singular perturbation theory, we define sliding and stability of sliding through a critical manifold of the singularly perturbed, regularized system. We show that this {notion of} sliding vector-field coincides with the Filippov one. The regularized system gives a parameterized surface, the \textit{canopy} \cite{jeffrey2014a}, independent of the regularization functions. This surface serves as our natural basis to derive new and simple geometric criteria on the existence, multiplicity and stability of the sliding flow, depending only on the smooth vector fields around the {intersection}. Interestingly, we are able to show that if there exist two sliding vector-fields then one is a saddle and the other is of focus/node/center type. This means that there is at most one stable sliding vector-field. We then investigate the effect of the choice of the regularization functions, and, using a blowup approach, we demonstrate the mechanisms through which sliding behavior can appear or disappear on the intersection and describe what consequences this has on the dynamics on the adjacent codimension-1 discontinuity sets. This blowup method also shows that {the PWS limit of the regularization 
may be well-defined, even in cases where the Filippov sliding vector-field is nonunique}. Finally, we show the existence of canard explosions of regularizations of PWS systems in $\mathbb R^3$ that depend on a single unfolding parameter.
\end{abstract}
%

\pagestyle{myheadings}
\thispagestyle{plain}

\section{Introduction}\seclab{Introduction}

A piecewise smooth (PWS) system \protect\cite{filippov1988differential, MakarenkovLamb12} consists of finitely many ordinary differential equations
\begin{align*}
 \dot{\textbf x} = X_i(\textbf x),\textbf x\in \mathcal Q_i\subset \mathbb R^n,
\end{align*}
where each $X_i$ is a smooth vector-field. The regions $\mathcal Q_i$ are open sets separated by a codimension-1 set $\Sigma$, called the switching manifold. 

PWS systems occur in a many applications, including problems in contact mechanics (impact, friction, gears, rocking blocks, etc), electronics (switches diodes and DC/DC converters), control engineering and many others. See \protect\cite{Bernardo08, MakarenkovLamb12} for a more complete list of applications and further references. 

Mathematically, PWS systems do not in general define a closed dynamical system. Points within one region $\mathcal Q_i$ can reach  $\Sigma$ in finite time by following $X_i$. From such a point on the switching manifold, it may not be possible to follow another vector-field without jumping in phase space. In this case, one can define a \textit{sliding vector-field} as the convex combination of the vectors, say $X_1$ and $X_2$, that appear, in the generic situation, on either side of $\Sigma$. The sliding vector-field is unique when it exists. This approach is called the Filippov convention and it enables the continuation of orbits that cannot escape $\Sigma$ by following the prescribed vectors $X_i$. The subset of $\Sigma$, where a sliding vector-field can be defined, is called the sliding region. 

A PWS system following the Filippov convention is called a Filippov system. Such systems have received some attention over the past years, see e.g. \protect\cite{jeffrey_geometry_2011,jeffrey_two-fold_2009} where generic bifurcations of these vector-fields are described. Now, even though Filippov systems do possess local forward flows, forward uniqueness can break down in a number of ways. One prominent example of such a breakdown, is the two-fold, where orbits of e.g. $X_1$ and $X_2$ have tangencies with $\Sigma$ at the same point. From such a point, several forward orbits may exist. {It is interesting from a mathematical point of view and from a modelling perspective to replace the PWS system with a more regular one for which the PWS system is an idealisation, and analyse how solutions of the regular model behave as the system approaches the PWS idealisation.} As an example, it is possible to view the PWS system as a singular limit of a smooth, regularized vector-field obtained by gluing the PWS vector-fields, on either side of the discontinuity set, together in a smooth monotonic fashion. Interestingly, when a Filippov system possesses a sliding region, then this \textit{regularized system} possesses an invariant slow manifold as a graph of the sliding region. On this slow manifold, the flow converges to the sliding flow as the regularised system approaches (pointwise) the PWS one. This result is independent of the details of the regularization. Hence, one may view this as an approach to ``derive'' the Filippov sliding vector-field. The regularization approach to PWS systems was used in recent references  \protect\cite{dieci2011a,dieci2013regularizing,guglielmi2017a,KristiansenHogan2015,krihog2,KristiansenHogan2018}. In \protect\cite{KristiansenHogan2018}, for example, it was shown, using techniques from geometric singular perturbation theory, that the PWS two-fold possesses a distinguished orbit that the regularized system follows sufficiently close to its PWS limit. This result is again independent of the details of the regularization - it only depends upon the PWS system. In this sense, one can view a Filippov system as a zero order model which can be ``corrected'' by the regularization approach and the use of singular perturbation theory. Such higher order corrections ``resolve'' ambiguities of the simpler model.  

{In this paper, we are interested in the local situation where $\Sigma$ is not a manifold but the union of two local codimension-1 manifolds $\Pi_f$ and $\Pi_g$ that intersect transversally in a codimension-2 submanifold $\Lambda=\Pi_f\transv \Pi_g$. }
Locally, $\Sigma$ then divides the PWS system into four quadrants
$\mathcal R_1,\ldots, \mathcal R_4$ near $\Lambda$, see \figref{PWSDiagram}. Such systems appear, for example, in gene regulatory networks, see e.g. \protect\cite{acary2014a,guglielmi2015a} and references therein.   Reaching $\Lambda$ by following $X_1,X_2,X_3$ or $X_4$ (compactly $X_{1-4}$) is not generic. It is only generic when following the sliding vector-field within the sliding region of $\Sigma$. 
We will in this paper be working in $\mathbb R^3$ where $\Sigma$ is $2D$ and $\Lambda$ is $1D$.

It is possible to extend the Filippov convention to $\Lambda$ on $\Sigma$ and define a sliding vector-field as a convex combination of the four adjacent vector-fields that is tangent to $\Lambda$. This approach is taken in \protect\cite{jeffrey2014a}, for instance, where the author constructs a parametrized surface from this convex combination, called canopy, and argues that the sliding vector field is defined by the point of intersection of this surface with the tangent space of the codimension-2 discontinuity. But the sliding vector-field is not necessarily unique \protect\cite{jeffrey2014a}, there can be two choices, and Filippov's approach is therefore inherently ill-posed. Moreover, in \protect\cite{jeffrey2014a}, it also says that ``There is no simple criterion for determining a priori how many sliding vectors will exist
in general (at least a general criterion is not yet known). One must solve the system and
investigate how many valid vectors there are within the convex canopy $\mathcal{F}$ that are tangent
to the discontinuity surface $\mathcal{D}$,'' see \protect\cite[p.1091]{jeffrey2014a}. To deal with the ill-posedness of the Filippov approach, \protect\cite{jeffrey2014a} defines a ``dummy system'' which introduces a slow-fast system on a blowup of $\Lambda$. This leads to a notion of stability of the sliding flow \protect\cite[p.1091, Sec. 4]{jeffrey2014a}, and in the closing remarks of the paper it is stated that the justification of the dummy system and its connection to applications, together with the the issue of (non)uniqueness of solutions, remain open problems.

In this paper, we apply the regularization approach to study PWS systems with {intersecting switching manifolds of codimension-1} as idealisations of smooth vector-fields having very rapid transitions across both $\Pi_f$ and $\Pi_g$. In this way we arrive at the canopy described by Jeffrey in \protect\cite{jeffrey2014a} through an associated layer problem of the singularly perturbed, regularized system. We then undertake a geometric analysis of this surface that allows us to derive general and explicit conditions on the existence and multiplicity of the sliding flow, {by studying quadrilateral projections similar to the ones introduced in \protect\cite{dieci2017moments}}. See \secref{para} and \secref{existence}.  Our general approach to the problem is to relate the sliding vector-field to the dynamics on the critical manifold of the singular perturbed regularized system. See \secref{definition} {and \protect\cite{KristiansenHogan2015,KristiansenHogan2018,krihog2,teixeira2012regularization}}.  This approach also gives rise to a natural definition of stability of the sliding vector-field, see \defnref{slide2}. We can then study bifurcations of the sliding vector-field using standard techniques of dynamical systems theory. The regularization approach also provides a justification of the dummy system used in \protect\cite{jeffrey2014a}, as it turns out that this system is in fact related to our layer problem for a particular choice of regularization functions (see \remref{dummy}). 

%

The paper is organized as follows: In \secref{preliminaries}, we first present some basic concepts from PWS systems and introduce a regularization of a PWS system across a single codimension-1 discontinuity set. We also demonstrate in \thmref{holy} the equivalence between sliding and reduced, slow flow along a critical manifold of the regularization. In \secref{regul}, we then introduce a (double-)regularization of a PWS system near $\Lambda$. We use this system to define sliding and stability of sliding along $\Lambda$ in \secref{definition} and show, in line with \thmref{holy}, that this definition of the sliding vector-field is equivalent to the Filippov one, see \thmref{holy2}. In the following two sections, \secref{para} and \secref{existence} we then present a thorough and novel analysis of the existence and multiplicity of sliding. In \secref{stabsec} we study the stability of sliding. Here we show that if two sliding vector-fields exists on $\Lambda$, then at most one is stable, see \thmref{uniqueStable}. We also provide some conditions on the PWS system for which the stability of sliding vector-field is independent of the details of the regularization, see \propref{stabilityExample} and \corref{stabilityExample2}. Finally, in \secref{bifurcation} we then describe the emergence and disappearance of sliding vectors and, using a blowup approach, study its consequences on the sliding dynamics along the adjacent codimension-1 sliding manifolds. Here we discuss how this approach can be used to obtain a well-defined limit of solutions as the smooth system approaches the PWS one, even in cases where the Fillipov sliding vector-field is non-unique. We conclude the paper in \secref{conclusion}.

\section{Preliminaries}\seclab{preliminaries}
In this section we set up our problem and present our PWS system in a suitable normalized form. We focus on $\mathbb R^3$ here and delay discussions of possible extensions to $\mathbb R^n$ to the conclusion section, \secref{conclusion}.  We therefore suppose that the switching manifold is the union of two $2D$ manifolds $\Pi_f, \Pi_g \subset \mathcal U$ defined by $\Pi_f=f^{-1}(0)$, $\Pi_g=g^{-1}(0)$ where $f(\textbf x)$ and $g(\textbf x)$ are two smooth functions both having $0$ as a regular value. We then suppose that these manifolds intersect transversally along $\Lambda =\Pi_f\transv \Pi_g$. We introduce local coordinates $\textbf x= (x,y,z)$ such that $f(\textbf{x})=y$, $\Pi_f =\{\textbf{x}\in \mathcal U~\vert ~ y=0\}$, $g(\textbf{x})=z$, $\Pi_g =\{\textbf{x}\in \mathcal U~\vert ~z=0\}$, and
\begin{eqnarray*}
\Lambda = \Pi_f \transv \Pi_g  = \{(x,y,z)~\big\vert ~ x\in \mathcal I,y=z=0\}:\quad \mbox{a subset of the $x$-axis},
\end{eqnarray*}
Here $\mathcal U\subset \mathbb R^3$ and $\mathcal I$ is an appropriate interval. We then consider a PWS system on $\mathcal U$ in the following form
\begin{gather}
\begin{gathered}
\dot{\textbf{x}} = X(\textbf x),\quad
X(\textbf x) 
=\left\{ \begin{array}{cc}
X_1(\textbf x)& \text{for}\quad  \textbf x\in \mathcal{Q}_1,\\
X_2(\textbf x)& \text{for}\quad \textbf x\in \mathcal{Q}_2,\\
X_3(\textbf x)& \text{for}\quad  \textbf x\in \mathcal{Q}_3,\\
X_4(\textbf x)& \text{for}\quad \textbf x\in \mathcal{Q}_4,
\end{array}\right.
\end{gathered}\eqlab{X14}
\end{gather}
where in our local coordinates, $\mathcal Q_{1-4}$ correspond to the four ``quadrants'' ($\{y\ge 0,z\ge 0\}$, $\{y\le 0, z\ge0\}$, $\{y\le 0,z\le 0\}$ and $\{y\ge 0,z\le 0\}$, respectively) 
that the $\mathbb{R}^3$ space is divided into by $\Pi_f$ and $\Pi_g$ (see \figref{PWSDiagram}).
We suppose that
\begin{eqnarray}
X_i\lp\textbf{x}\rp =  \lp
\alpha_i(\textbf{x}), \beta_i(\textbf{x}), \gamma_i(\textbf{x})
\rp^T, \quad i = 1,2,3,4,  \eqlab{SFields}
\end{eqnarray}
are smooth vector fields on $\mathcal U$. This holds, for example, if each $X_{i}$ is analytic on $\overline{\mathcal Q}_i$, after possibly restricting the local neighborhood $\mathcal U$ further. 

We further sub-divide $\Pi$ into 
\begin{align*}
 \Pi = \Pi_{1}\cup \Pi_{2}\cup \Pi_{3} \cup \Pi_{4}, 
\end{align*}
where 
\begin{align}
 \Pi_{i} =\overline{\mathcal Q}_i\cap\overline{\mathcal Q}_{i+1},\eqlab{PiiD}
 \end{align}
 See \figref{PWSDiagram}. The subscripts in \eqref{PiiD}
 are considered mod $4$ such that $\overline{\mathcal Q}_{5} =\overline{\mathcal Q}_{1}$. We adopt this convention henceforth. 
 
 Each plane $\Pi_{i}$ is a codimension-1 \textit{switching} manifold. For example, $\Pi_1$ separates the set $\mathcal{Q}_{1} =\{\textbf{x}\in \mathcal U~\vert~ y>0,\,z>0\}$ from the set $\mathcal{Q}_{2}=\{\textbf{x}\in \mathcal U~\vert~ y<0,\,z>0\}$. 
 Each switching manifold $\Pi_i$ is then sub-divided into three types of regions, crossing, sliding and folds. For example, for $\Pi_1$ we have $\Pi_1=\Pi_1^{cr}\cup \Pi_1^{sl}\cup \Pi_1^{f}$ where
\begin{itemize}
 \item $\Pi_1^{cr}\subset \Pi_1$ is the \textit{crossing region} where: 
\begin{eqnarray}
(X_1 f(x,0,z)(X_2 f(x,0,z)) =\beta_1(x,0,z) \beta_2(x,0,z) >0 \eqlab{slide_cond}.
\end{eqnarray}
 \item $\Pi_1^{sl}\subset \Pi_1$ is the \textit{sliding region} where \begin{eqnarray}
 (X_1 f(x,0,z))(X_2 f(x,0,z)) =\beta_1(x,0,z) \beta_2(x,0,z)<0 \eqlab{PiSL}.
 \end{eqnarray}
 \item $\Pi_1^{f}\subset \Pi_1$ is the \textit{fold region} where \begin{eqnarray}
 (X_1 f(x,0,z))(X_2 f(x,0,z)) =\beta_1(x,0,z) \beta_2(x,0,z)=0 \eqlab{Pif}.
 \end{eqnarray}
\end{itemize}
Here $X_{i} f=\nabla f \cdot X_{i}$ denotes the Lie-derivative of $f$ along $X_i$, $i=1,2,3,4$. Since $f(\textbf{x})=y$ in our coordinates we have that $X_i f =\beta_{i}$ by \eqref{SFields}. Similarly, $X_i g=\gamma_i$. We define the subsets of $\Pi_i$, $\Pi_i^{cr}$, $\Pi_i^{sl}$, $\Pi_i^{f}$ analogously for $i=2,3,4$.

\begin{figure}[h!]
	\centering
	\begin{subfigure}[b]{0.49\textwidth}
		\centering
		\includegraphics[scale = 0.1]{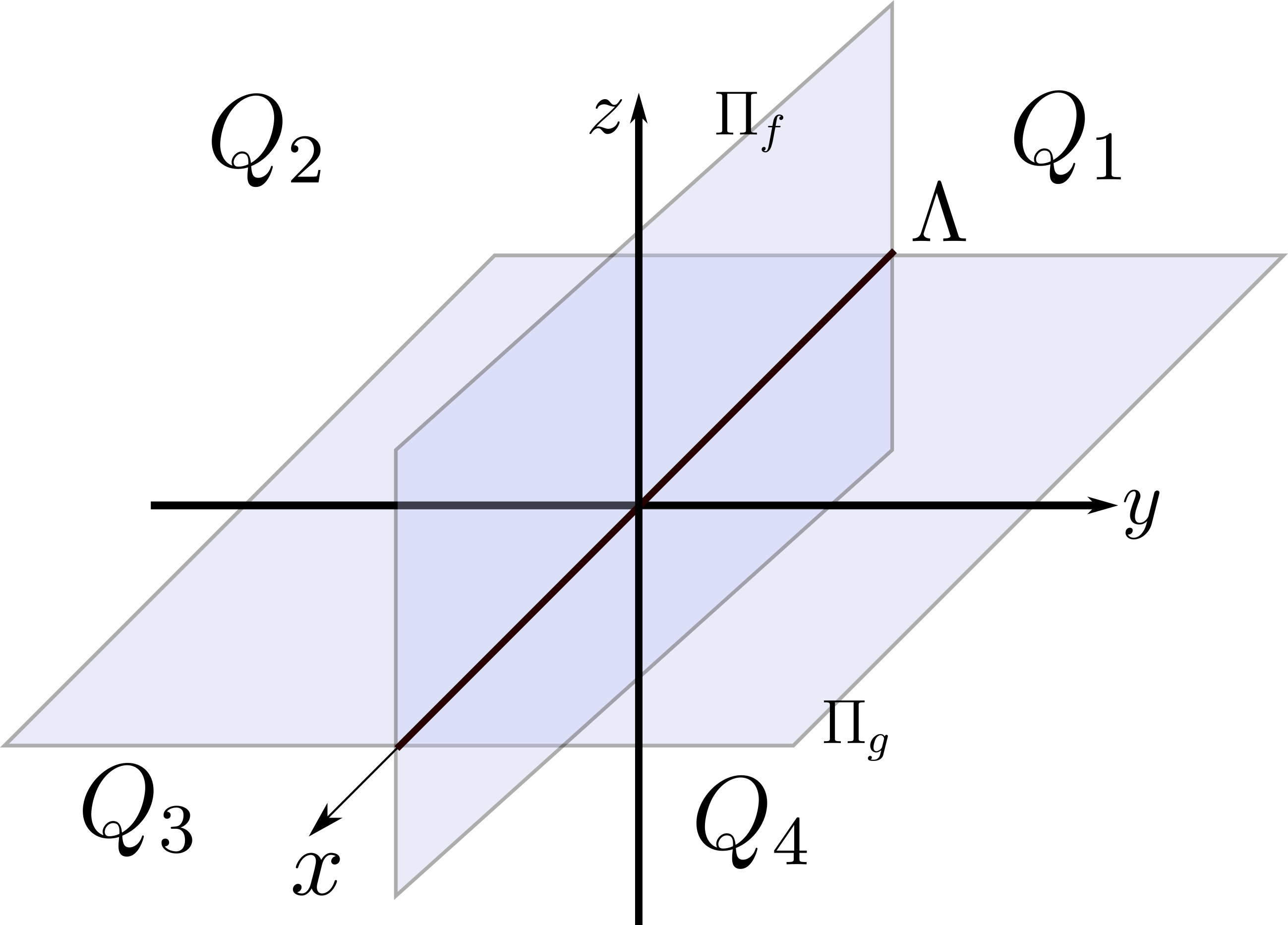}
		\caption{The switching sets in $\mathbb{R}^3$}
	\end{subfigure}
	\begin{subfigure}[b]{0.49\textwidth}
		\centering
		\includegraphics[scale = 0.1]{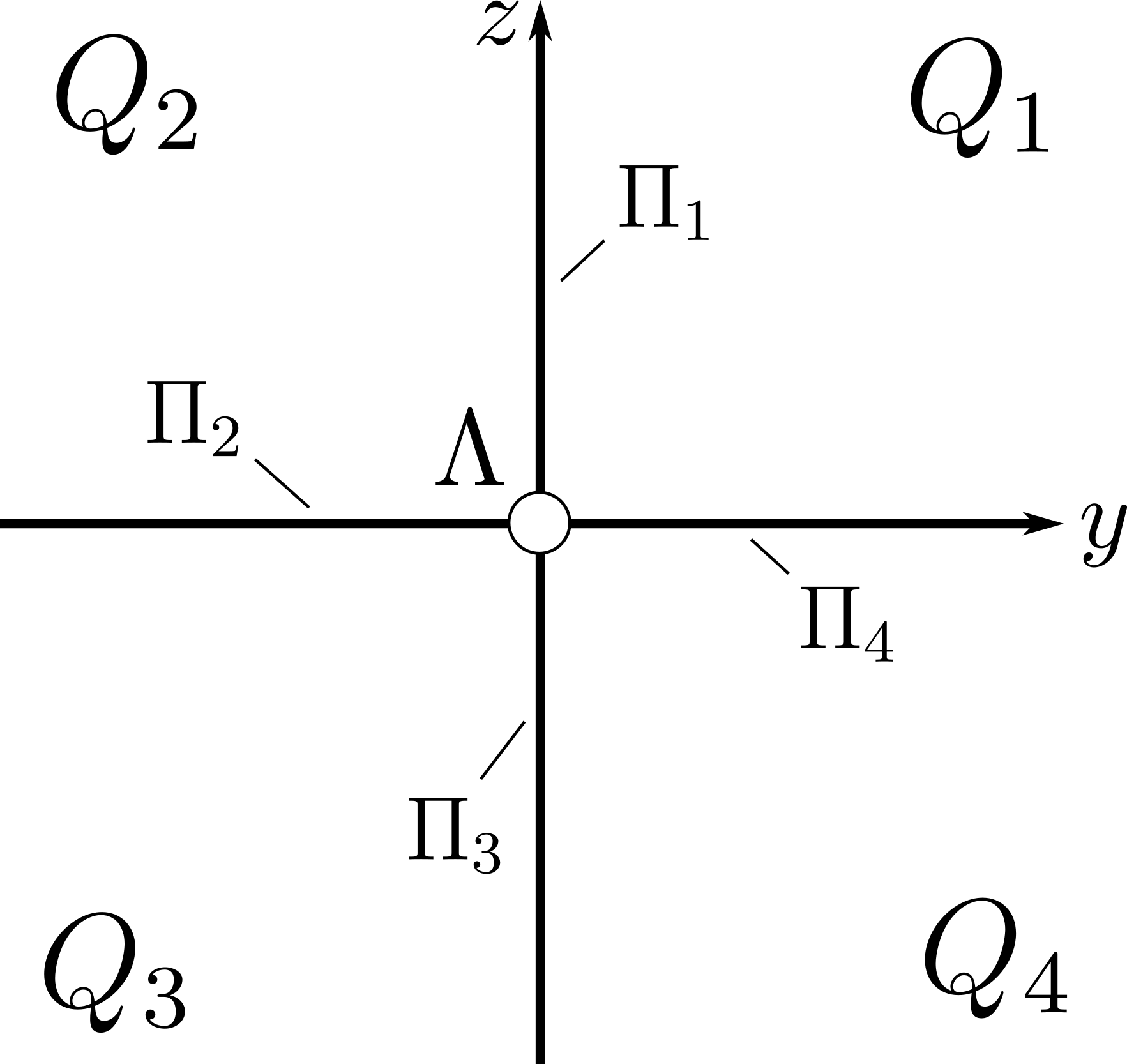}
		\caption{The switching sets in the $(y,z)-$plane}
	\end{subfigure}
	\caption{The switching sets and the four quadrants of the PWS system \eqref{X14} in $\mathbb{R}^3$ and the projection onto the $(y,z)-$plane.}
\figlab{PWSDiagram}
\end{figure}

In the sliding region, the vector fields on either side of $\Pi_i^{sl}$ point either toward or away from $\Pi_i^{sl}$. 
For $i=1$, we define the sliding vector-field by Filippov \protect\cite{filippov1988differential} as follows.
\begin{definition}\defnlab{Xslcod1}
Consider the PWS system $(X_1,X_2)$ on $\mathcal Q_1 \cup \mathcal Q_2$. Then the \textnormal{sliding vector field} $X_1^{sl}$ on $\Pi_1^{sl}$ (where $\beta_1(x,0,z)\beta_2(x,0,z)<0$) is the convex combination of 
$X_1$ and $X_2$ such that $X_1^{sl}(\textbf x)$ is tangent to $\Pi_1^{sl}$. In details,
 \begin{eqnarray}
 X_1^{sl}(\textbf x) =  
                         \sigma_1(\textbf x) X_1(\textbf x)+ (1-\sigma_1(\textbf x)) X_{2}(\textbf x)\in T_{\textbf x} \Pi_1^{sl},~ \textbf x\in \Pi_1^{sl},
 \eqlab{XSliding}
\end{eqnarray}
where $T_{\textbf x} \Pi_1^{sl}$ is the tangent space to $\Pi_1^{sl}$ at $\textbf x$ and $\sigma_1$ satisfies
\begin{eqnarray*}
\sigma_1(\textbf x) = 
\frac{\beta_2(\textbf x)}{\beta_2(\textbf x)-\beta_1(\textbf x)},~ \textbf x\in \Pi_1^{sl}.
\end{eqnarray*}
The flow of $X_1^{sl}$ is called the sliding flow. 
If $\beta_1(\textbf x)<0$ and $\beta_2(\textbf x)>0$, then the sliding flow is said to be stable, while if $\beta_1(\textbf x)>0$ and $\beta_2(\textbf x)<0$, then the sliding flow is said to be unstable.
\end{definition}

We define $X_i^{sl}$ on $\Pi_i^{sl}$ for $i=2,3,4$ analogously as the convex combination of $X_i$ and $X_{i+1}$ ($5\rightarrow 1$ if $i=4$) that is tangent to $\Pi_i^{sl}$. Notice that the sliding vector-field $X_i^{sl}$ on $\Pi_i^{sl}$ is unique.

Forward orbits of either $X_{1-4}$ on $\mathcal Q_{1-4}$, or $X_i^{sl}$ on $\Pi_i^{sl}$ can also reach $\Lambda$ in finite time. To have a well-defined forward or backward flow in our open set $\mathcal U$, we therefore need to define a \textit{sliding vector-field} on $\Lambda$. Traditionally, sliding vector fields on $\Lambda$ have been defined as the convex combinations of $X_{1-4}$ which are tangent to $\Lambda$, see for example \protect\cite{jeffrey2014a,filippov1988differential,guglielmi2015a}:

\begin{definition}
	\textbf{(Extension of the Filippov Convention on $\Lambda$)}
	Consider the PWS system $(X_1,X_2, X_3,X_4)$ on $\mathcal Q_1 \cup \mathcal Q_2\cup \mathcal Q_3 \cup \mathcal Q_4$. A \textnormal{sliding vector field} $X^{sl}$ (if it exists) is then a convex combination of $X_{1-4}$ such that $X^{sl}$ is tangent to $\Lambda$. In details,
	\begin{eqnarray}
	X^{sl}(\textbf x)&=
	\nu_1(x) X_1(\textbf x)+ \nu_2(x) X_2(\textbf x)+\nu_3(x) X_3(\textbf x)+\nu_4(x) X_4(\textbf x)\in T_{\textbf x}\Lambda , \quad \textbf x=(x,0,0)\in \Lambda,
	\eqlab{slide}
	\end{eqnarray}
	where $T_{\textbf x}\Lambda\simeq$ the $x$-axis is the tangent space to $\Lambda$ at $\textbf x$ and
	\begin{gather*}
	\sum_{i=1}^4\nu_i(x) = 1,\quad (x,0,0) \in \Lambda. \nonumber
	\end{gather*}
	\defnlab{slide2Fil}
\end{definition}

However, as stated in these references, there exists no simple criterion yet on determining a priori if and how many sliding vector fields are produced on $\Lambda$ in this way, without performing calculations and investigating whether and how many convex combinations of $X_{1-4}$ that are tangent to $\Lambda$ exist. That is, we are not yet in position to define and describe $\Lambda^{cr}$, $\Lambda^{sl}$ and $\Lambda^f$ using conditions similar to \eqref{slide_cond}, \eqref{PiSL} and \eqref{Pif}. Furthermore, the notion of stability of the sliding flow on $\Lambda$ is not as clear as in the case of codimension-1 discontinuities. In the case of codimension-1 discontinuities, according to \defnref{Xslcod1} and as \figref{co1slide} illustrates, the sliding flow is simply characterized as either stable or unstable depending on the orientation of the smooth vector fields on either side of the discontinuity. In the case of codimension-2 discontinuities, on the other hand,  since we have four smooth vector fields around the discontinuity (see e.g. \figref{proj1}), such a simple characterization is not possible, and further analysis is required. Also, it seems inaccurate to describe \figref{proj1} (b), second row, as just unstable. We see both orbits entering and leaving $\Lambda$, creating a saddle structure with stable-like and unstable-like manifolds, each being $2D$ in the full $3D$ space. 
 
To circumvent these issues we will in this paper simply view the PWS vector-field \eqref{X14} as a singular limit of a regularization of \eqref{X14}. First, we will in the following section describe the connection between the sliding vector-field in \defnref{Xslcod1} and its regularization for the case of a codimension-1 discontinuity set. We will again focus on $\Pi_1$ but $\Pi_i$ can be handled similarly. 
 \begin{figure}[h!]
 	\centering
 	\begin{subfigure}[b]{0.49\textwidth}
 		\centering
 		\includegraphics[scale = 0.091]{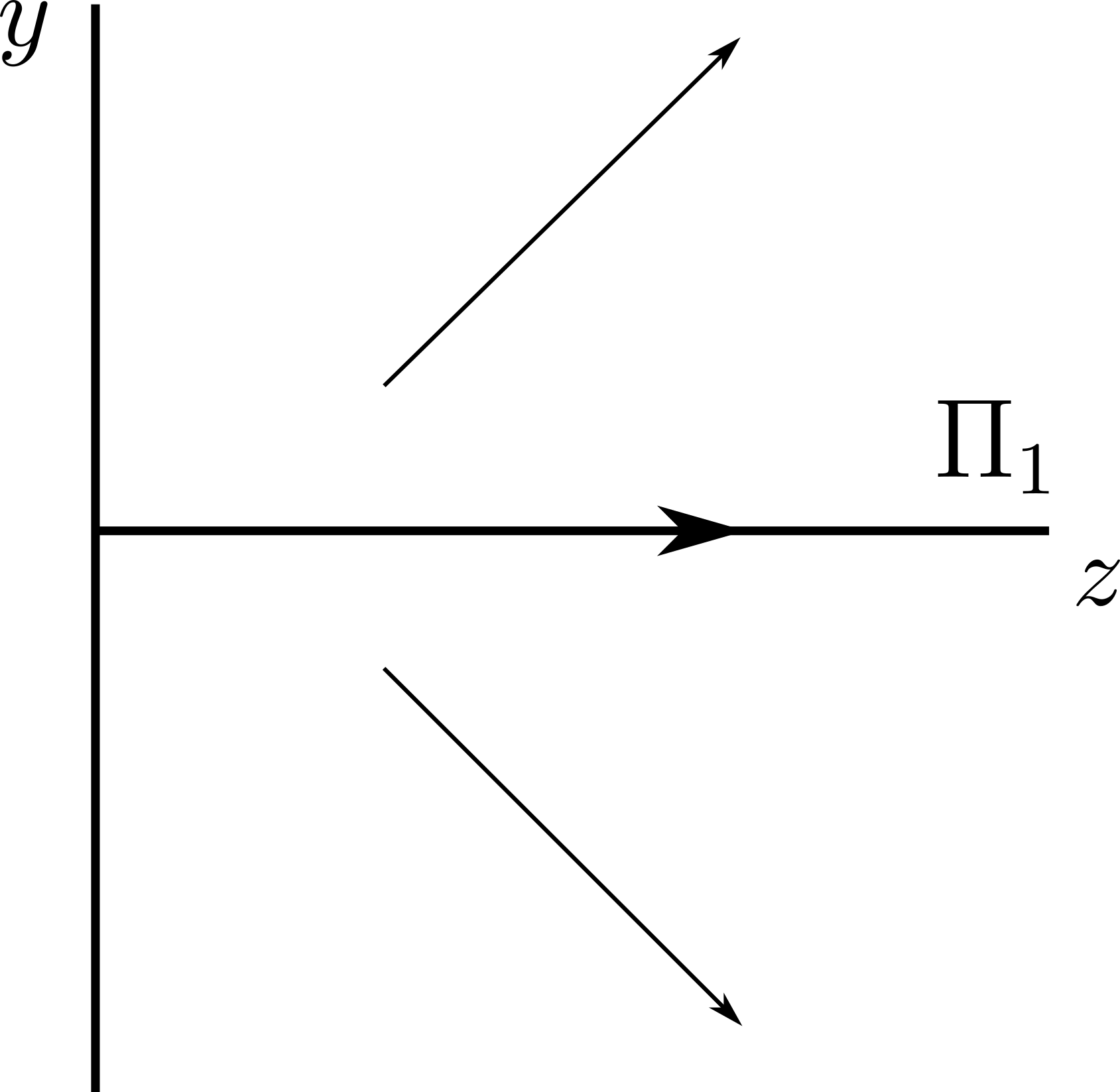}
 		\caption{unstable sliding}
 	\end{subfigure}
 	\begin{subfigure}[b]{0.49\textwidth}
 		\centering
 		\includegraphics[scale = 0.091]{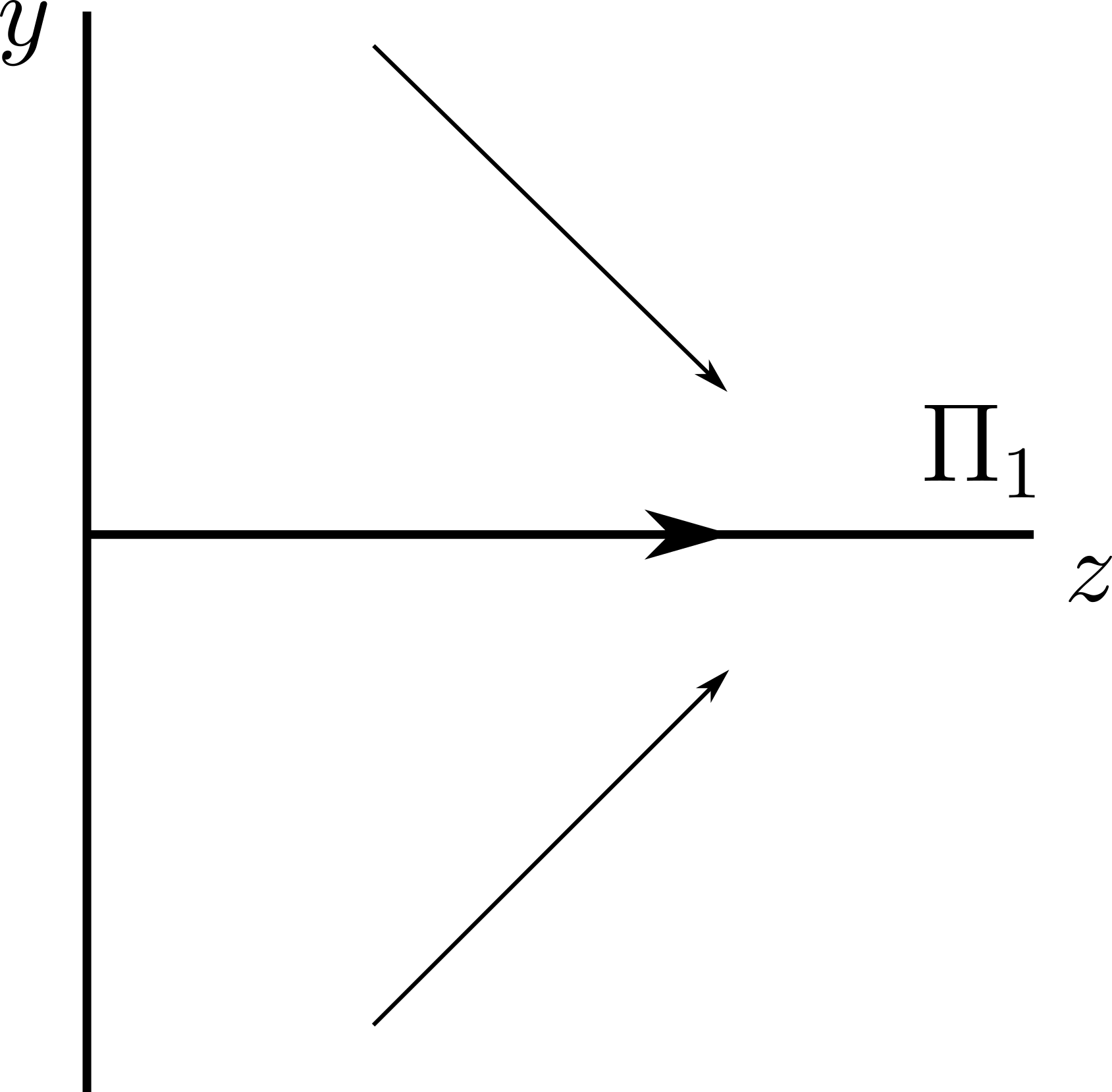}
 		\caption{stable sliding}
 	\end{subfigure}
 	\caption{In the case of sliding on codimension-1 discontinuities, the sliding flow is characterized as simply stable or unstable, depending on the orientation of the smooth vector fields on either side of the discontinuity. If both vector fields point away from the discontinuity (left), then the sliding flow is unstable, while if both vector fields point towards the discontinuity (right), then the sliding flow is stable.}
 	\figlab{co1slide}
 \end{figure}

\subsection{Regularization of the PWS system $(X_1,X_2)$ across the codimension-1 discontinuity set $\Pi_1$}
We define a regularization function as follows.
\begin{definition}\defnlab{phiFuncs}
	A regularization function is a smooth ($C^{k\ge 1}$) function $\phi:\,\mathbb R\rightarrow [-1,1]$ which is strictly increasing $\phi'(s)>0$ for all $s$: $\phi(s)\in (-1,1)$, and asymptotic:
	\begin{align*}
	\phi(s)\rightarrow \pm 1\quad \mbox{for}\quad s\rightarrow \pm \infty.
	\end{align*}
	Moreover, the two functions $\phi_+:[0,\infty)\rightarrow [-1,1]$ and $\phi_-:(-\infty,0] \rightarrow [-1,1]$ defined as
	\begin{align}
	\phi_\pm(r) = \begin{cases}
	\pm 1 ~\mbox{for $r=0$},\\
	\phi(r^{-1}) ~\mbox{for $r\gtrless 0$},\end{cases}
	 \eqlab{phipm}
	\end{align}
	are also smooth functions.
\end{definition}

This class of regularization functions include the non-analytic Sotomayor and Teixeira regularization functions \protect\cite{Sotomayor96} that satisfy
\begin{align*}
\phi(s)=\left\{\begin{array}{cc}
1 & \text{for}\quad s\ge 1,\\
\in (-1,1)& \text{for}\quad s\in (-1,1),\\
-1 & \text{for}\quad s\le -1.\\
\end{array}\right.
\end{align*}
Such functions were used in \protect\cite{KristiansenHogan2015,krihog2,reves_regularization_2014}. But the set of functions in \defnref{phiFuncs} also include more natural regularization functions such as $\lp2/ \pi \rp\text{arctan}(s)$ and $\tanh(s)$. 
\begin{remark}\remlab{newRemark}
{
The condition \eqref{phipm} is a technical one that enables the use of dynamical systems theory to study \eqref{Regul} for $\varepsilon\ll 1$ (using local invariant manifolds).  See e.g. \protect\cite{KristiansenHogan2018,kriBlowup}. In this manuscript, we will use \eqref{phipm} in \secref{bifurcation}, see also \appref{aa} for further details. }
\end{remark}

We then define the following regularization of the PWS system $(X_1,X_2)$ on $\mathcal Q_1\cup \mathcal Q_2$:
\begin{definition} A regularization of the PWS system $(X_1,X_2)$ on $\mathcal Q_1\cup \mathcal Q_2$ is a smooth vector field:
	\begin{eqnarray}
	X_\varepsilon= \frac{\lp 1+\psi(\varepsilon^{-1}y)\rp}{2}X_1+\frac{\lp 1-\psi(\varepsilon^{-1}y)\rp}{2}X_2, \eqlab{Regul}
	\end{eqnarray}
	for $0<\varepsilon\ll 1$, where the function $\psi$ satisfies \defnref{phiFuncs}.
\end{definition}

Using \eqref{SFields}, the regularized system \eqref{Regul} gives the system of differential equations:
\begin{eqnarray}
\begin{aligned}
\dot{x} &= \frac{\lp 1+\psi(\varepsilon^{-1}y)\rp}{2}\alpha_1+\frac{\lp 1-\psi(\varepsilon^{-1}y)\rp}{2}\alpha_2,\\
\dot{y} &= \frac{\lp 1+\psi(\varepsilon^{-1}y)\rp}{2}\beta_1+\frac{\lp 1-\psi(\varepsilon^{-1}y)\rp}{2}\beta_2,\\
\dot{z} &= \frac{\lp 1+\psi(\varepsilon^{-1}y)\rp}{2}\gamma_1+\frac{\lp 1-\psi(\varepsilon^{-1}y)\rp}{2}\gamma_2.
\end{aligned}
\eqlab{Reg1}
\end{eqnarray}
Notice that 
 \begin{align*}
  X_\varepsilon(\textbf x) \rightarrow \begin{cases}
                                        X_1(\textbf x)~\mbox{for $\textbf x\in \mathcal Q_1$},\\
                                        X_2(\textbf x)~\mbox{for $\textbf x\in \mathcal Q_2$},
                                       \end{cases},
 \end{align*}
pointwise for $\varepsilon\rightarrow 0$. However, the system is singular for $y=\varepsilon=0$. It will therefore be useful to work with two separate time scales. We will say that $t$ in \eqref{Reg1} is the \text{slow time} whereas $\tau=t\varepsilon^{-1}$ will be referred to as the \text{fast time}. Furthermore, dynamics is hidden within $y=\mathcal{O}(\varepsilon)$. We therefore introduce $\hat y$ by:
\begin{align}
 \hat y = \varepsilon^{-1} y. \eqlab{yhat}
\end{align}
Inserting \eqref{yhat} into \eqref{Reg1} gives:
\begin{align}
\begin{aligned}
\dot{x} &= \frac{\lp 1+\psi(\hat{y})\rp}{2}\alpha_1+\frac{\lp 1-\psi(\hat{y})\rp}{2}\alpha_2,\\
\varepsilon\dot{\hat{y}} &= \frac{\lp 1+\psi(\hat{y})\rp}{2}\beta_1+\frac{\lp 1-\psi(\hat{y})\rp}{2}\beta_2,\\
\dot{z} &= \frac{\lp 1+\psi(\hat{y})\rp}{2}\gamma_1+\frac{\lp 1-\psi(\hat{y})\rp}{2}\gamma_2.
\end{aligned}
\eqlab{SFS}
\end{align}
This is a slow-fast system \protect\cite{kuehn2015}, in the slow formulation with both $x$ and $z$ being slow variables and $\hat y$ being fast. The fast system with respect to the fast time $\tau$ is 
\begin{eqnarray}
\begin{aligned}
{x}' &= \varepsilon\lp\frac{\lp 1+\psi(\hat{y})\rp}{2}\alpha_1+\frac{\lp 1-\psi(\hat{y})\rp}{2}\alpha_2\rp,\\
{\hat{y}'} &= \frac{\lp 1+\psi(\hat{y})\rp}{2}\beta_1+\frac{\lp 1-\psi(\hat{y})\rp}{2}\beta_2,\\
{z'} &= \varepsilon\lp\frac{\lp 1+\psi(\hat{y})\rp}{2}\gamma_1+\frac{\lp 1-\psi(\hat{y})\rp}{2}\gamma_2\rp.
\end{aligned}\eqlab{SFF}
\end{eqnarray}
The limiting systems \eqref{SFF}$_{\varepsilon=0}$: 
\begin{align}
\begin{aligned}
{x}' &= 0,\\
{\hat{y}'} &= \frac{\lp 1+\psi(\hat{y})\rp}{2}\beta_1+\frac{\lp 1-\psi(\hat{y})\rp}{2}\beta_2,\\
{z'} &= 0,
\end{aligned}\eqlab{layer}
\end{align}
is called the \textit{layer problem}, while \eqref{SFS}$_{\varepsilon=0}$:
\begin{eqnarray}
\begin{aligned}
\dot{x} &= \frac{\lp 1+\psi(\hat{y})\rp}{2}\alpha_1+\frac{\lp 1-\psi(\hat{y})\rp}{2}\alpha_2,\\
0 &= \frac{\lp 1+\psi(\hat{y})\rp}{2}\beta_1+\frac{\lp 1-\psi(\hat{y})\rp}{2}\beta_2,\\
\dot{z} &= \frac{\lp 1+\psi(\hat{y})\rp}{2}\gamma_1+\frac{\lp 1-\psi(\hat{y})\rp}{2}\gamma_2,
\end{aligned}
\eqlab{reduced}
\end{eqnarray}
is called the \textit{reduced problem}. In \eqref{layer} and \eqref{reduced} 
\begin{align}
 \alpha_i = \alpha_i(x,0,z),\,\beta_i=\beta_i(x,0,z),\,\gamma_i=\gamma_i(x,0,z).\eqlab{later}
\end{align}
Notice that $x$ and $z$ are constant in \eqref{layer} whereas $\hat y$ is slaved in \eqref{reduced}.
Let $\psi_* = \psi_* \lp x,z\rp$ be defined as:
\begin{align}
\psi_* = \lp \beta_2+\beta_1\rp / \lp \beta_2-\beta_1\rp,\eqlab{psiStar}
\end{align}
for $\beta_2\ne\beta_1$. Clearly, $\psi_*(x,z)\in (-1,1)$ if and only if $(x,0,z)\in \Pi_{1}^{sl}$ and $\beta_1(x,0,z)\beta_2(x,0,z)<0$. The \textit{critical manifold $C_0$} of the slow-fast system \eqref{layer} is then defined as the following graph over $\Pi_{1}^{sl}$:
\begin{align}
C_0 = \lb \lp x, \hat{y}, z\rp ~\lvert~  ~\hat{y} = \psi^{-1}\lp \psi_*(x,z) \rp,~ (x,0,z)\in \Pi_1^{sl}\rb. \eqlab{C01}
\end{align}
Notice, that $C_0$ is the set of equilibria of \eqref{layer}. 
Now, we have the following important result.
\begin{theorem}\thmlab{holy}
 \protect\cite{Llibre09,KristiansenHogan2015}
Consider a stable (unstable) sliding vector-field $X_1^{sl}$ on $\Pi_1^{sl}$. Then $C_0$ \eqref{C01} is a normally hyperbolic and attracting (repelling, respectively) critical manifold of \eqref{layer}. Furthermore, let $\Phi:C_0\to\Pi_1^{sl}$ be the diffeomorphism defined by $\Phi (x,\hat y,z)=(x,0,z)$. Then the pull-back of $X_1^{sl}$, $\Phi_* X_1^{sl}$, coincides with the reduced vector-field, see \eqref{reduced}, on $C_0$.
\end{theorem}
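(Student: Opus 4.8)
\noindent\emph{Proof strategy.} The plan is to verify the two assertions by direct computation from the layer equation \eqref{layer}, the reduced equations \eqref{reduced}, and the Filippov formula \eqref{XSliding}. First I would observe that $C_0$ in \eqref{C01} is exactly the set of zeros of the right-hand side of \eqref{layer}: solving $\tfrac12(1+\psi(\hat y))\beta_1+\tfrac12(1-\psi(\hat y))\beta_2=0$ for $\psi(\hat y)$ yields $\psi(\hat y)=\psi_*(x,z)$ with $\psi_*$ as in \eqref{psiStar}, and since, by \defnref{phiFuncs}, $\psi$ is a strictly increasing $C^k$ bijection of $\mathbb R$ onto $(-1,1)$, this equation has a unique solution $\hat y=\psi^{-1}(\psi_*(x,z))$ precisely when $\psi_*(x,z)\in(-1,1)$, i.e.\ when $(x,0,z)\in\Pi_1^{sl}$. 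Smoothness of the graph $C_0$ over $\Pi_1^{sl}$ then follows from the inverse function theorem ($\psi'>0$ makes $\psi^{-1}$ of class $C^k$ on $(-1,1)$) together with smoothness of $\psi_*$ where $\beta_2\neq\beta_1$, which holds on $\Pi_1^{sl}$ because there $\beta_1\beta_2<0$.

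Next I would linearize \eqref{layer} in the fast variable: the $\hat y$-derivative of its right-hand side is $\tfrac12\psi'(\hat y)(\beta_1-\beta_2)$. Along $C_0$ this quantity is nonzero, since $\psi'>0$ everywhere and $\beta_1\neq\beta_2$ on $\Pi_1^{sl}$, so $C_0$ is normally hyperbolic; moreover its sign is the sign of $\beta_1-\beta_2$. For a stable sliding vector-field, $\beta_1<0<\beta_2$, so $\beta_1-\beta_2<0$ and $C_0$ is attracting; for an unstable one, $\beta_1>0>\beta_2$, so $\beta_1-\beta_2>0$ and $C_0$ is repelling. This gives the first claim.

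To identify the reduced flow, I would substitute $\psi(\hat y)=\psi_*$ into the $x$- and $z$-equations of \eqref{reduced}. From \eqref{psiStar} one computes $\tfrac12(1+\psi_*)=\beta_2/(\beta_2-\beta_1)$ and $\tfrac12(1-\psi_*)=-\beta_1/(\beta_2-\beta_1)$, which are exactly the Filippov weights $\sigma_1$ and $1-\sigma_1$ of \defnref{Xslcod1}; hence the reduced vector-field on $C_0$ has $(x,z)$-components $\sigma_1\alpha_1+(1-\sigma_1)\alpha_2$ and $\sigma_1\gamma_1+(1-\sigma_1)\gamma_2$, with the $\hat y$-component slaved to the graph. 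On the other side, $X_1^{sl}=\sigma_1X_1+(1-\sigma_1)X_2$ has $y$-component $\sigma_1\beta_1+(1-\sigma_1)\beta_2=0$ --- this vanishing is precisely the tangency to $\Pi_1^{sl}\subset\{y=0\}$ built into \eqref{XSliding} --- so in the chart $(x,z)$ on $\Pi_1^{sl}$ it reads $(\sigma_1\alpha_1+(1-\sigma_1)\alpha_2,\ \sigma_1\gamma_1+(1-\sigma_1)\gamma_2)$. Since $\Phi(x,\hat y,z)=(x,0,z)$ is, in these charts, the identity on the $(x,z)$-coordinates, $\Phi$ conjugates the reduced vector-field on $C_0$ to $X_1^{sl}$ on $\Pi_1^{sl}$, which is the asserted identity.

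I do not expect a genuine obstacle here: the argument is algebraic manipulation of convex-combination coefficients, and the only points requiring care are the invertibility and $C^k$-smoothness of $\psi$ near $C_0$ (supplied by \defnref{phiFuncs}) and keeping track of the sign of $\beta_1$ in order to read off ``attracting'' versus ``repelling''. If anything is delicate it is purely notational --- making sure the direction of the diffeomorphism $\Phi$ and the meaning of $\Phi_*X_1^{sl}$ are used consistently --- rather than mathematical.
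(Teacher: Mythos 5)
Your proposal is correct and follows essentially the same route as the paper's own proof: linearize the layer problem in $\hat y$ to get the single nonzero eigenvalue $\tfrac12\psi'(\hat y)(\beta_1-\beta_2)$, whose sign is read off from the stability convention of \defnref{Xslcod1}, and then identify $\tfrac12(1+\psi_*)=\beta_2/(\beta_2-\beta_1)=\sigma_1$ so that the reduced equations reproduce the Filippov combination \eqref{XSliding}. You simply supply a few more details (smoothness of the graph $C_0$ and the explicit tangency remark) than the paper's terser version.
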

\begin{proof}
 The proof is straightforward but we include some details here because the result is crucial to the approach of the paper. For the hyperbolicity and the stability we simply linearize \eqref{layer} about a point $(x,\hat y,z)\in C_0$. If $X_1^{sl}$ is stable (unstable) then we find a single non-zero and negative (positive) eigenvalue. To realise that the reduced problem coincides with $X_{sl}$ we define $\sigma_1 = {\lp 1+\psi_*\rp}/{2}$ so that
\begin{eqnarray*}
\sigma_1 = \frac{\beta_2}{\beta_2-\beta_1},
\end{eqnarray*}
and realise from \eqref{reduced} that
\begin{align*}
\dot{x} &= \sigma_1 \alpha_1+\lp 1-\sigma_1\rp \alpha_2,\\
\dot{z} &= \sigma_1 \gamma_1+\lp 1-\sigma_1\rp \gamma_2,
\end{align*}
which coincides with $X_1^{sl}$  \eqref{XSliding}.
\end{proof}

The converse statement is also true, i.e. a reduced vector-field on a critical manifold also gives sliding of the PWS system. Similarly, if $(x,0,z)\in \Pi_1^{cr}$ then $\psi_*$ in \eqref{psiStar} is $\notin (-1,1)$ and therefore there is no equilibrium of \eqref{layer}. Hence $\hat y'\gtrless 0$. 

\section{{Regularization of the PWS system at the intersection of the discontinuities}}\seclab{regul} 
We now define a regularization of the PWS system \eqref{X14} in a neighborhood of $\Lambda$ by {regularizing across both codimension-1 discontinuity sets $\Pi_f$ and $\Pi_g$ at the same time}:
\begin{definition} A regularization of the PWS system \eqref{X14} is a smooth vector field:
\begin{eqnarray}
\begin{aligned}
X_\varepsilon(x,y,z)= &\frac{1}{2}\lp \frac{X_1}{2}{\lp 1+\phi\lp \varepsilon^{-1}z\rp\rp}+\frac{X_4}{2}{\lp 1-\phi\lp \varepsilon^{-1}z\rp\rp}\rp{\lp 1+\psi\lp \varepsilon^{-1}y\rp\rp}\\
&+\frac{1}{2}\lp \frac{X_2}{2}{\lp 1+\phi\lp \varepsilon^{-1}z\rp\rp}+\frac{X_3}{2}{\lp 1-\phi\lp \varepsilon^{-1}z\rp\rp}\rp{\lp 1-\psi\lp \varepsilon^{-1}y\rp\rp},
\end{aligned}\eqlab{reg3a}
\end{eqnarray}
for $0<\varepsilon\ll 1$, where the functions $\phi, \psi$ both belong to the class of functions defined in \defnref{phiFuncs}.
\end{definition}

Notice that 
\begin{align}
X_\varepsilon(\textbf x) \rightarrow \begin{cases}
X_1(\textbf x)~\mbox{for $\textbf x\in \mathcal Q_1$},\\
X_2(\textbf x)~\mbox{for $\textbf x\in \mathcal Q_2$},\\
X_3(\textbf x)~\mbox{for $\textbf x\in \mathcal Q_3$},\\
X_4(\textbf x)~\mbox{for $\textbf x\in \mathcal Q_4$},\\
\end{cases} 
\eqlab{singlim}
\end{align}
pointwise for $\varepsilon\rightarrow 0$. 
For simplicity, we will henceforth assume the following:
\begin{itemize}
 \item[(A)] the coordinate functions $\alpha_i,\,\beta_i$ and $\gamma_i$ only depend on $x$ (and not on $y$ and $z$), and we will generally suppress the dependence on $x$ in our notation.
\end{itemize}
All of our result extend to the more general case but the notation just gets slightly more involved. 

From the right hand side of \eqref{reg3a} we define the function $F_x:(-1,1)^2\to\mathbb{R}^3$ as:
\begin{align}
F_x(\psi,\phi) = &\frac{1}{2}\lp \frac{X_1}{2}{\lp 1+\phi\rp}+\frac{X_4}{2}{\lp 1-\phi\rp}\rp{\lp 1+\psi\rp}+\frac{1}{2}\lp \frac{X_2}{2}{\lp 1+\phi\rp}+\frac{X_3}{2}{\lp 1-\phi\rp}\rp{\lp 1-\psi\rp}, \eqlab{Fx}
\end{align}
for any $x\in \mathcal I$ such that 
\begin{align*}
X_\varepsilon(x,y,z) = F_x\lp\psi\lp\varepsilon^{-1}y\rp,\phi\lp\varepsilon^{-1}z\rp\rp,
\end{align*}
using that $X_i$ only depends upon $x$ by assumption (A).
Furthermore, we will use the function $\tilde{F}_x:{(-1,1)}^2\to \mathbb{R}^2$ in order to refer to the $yz-$components of $F_x$:
\begin{align}
\tilde{F}_x(\psi,\phi) = &\frac{1}{2}\lp \frac{\tilde{X}_1}{2}{\lp 1+\phi\rp}+\frac{\tilde{X}_4}{2}{\lp 1-\phi\rp}\rp{\lp 1+\psi\rp}+\frac{1}{2}\lp \frac{\tilde{X}_2}{2}{\lp 1+\phi\rp}+\frac{\tilde{X}_3}{2}{\lp 1-\phi\rp}\rp{\lp 1-\psi\rp}, \eqlab{FxTil}
\end{align}
where $\tilde{X}_i$ are the projections of the smooth vector fields $X_i$ onto the $yz-$plane:
\begin{eqnarray}
\tilde{X}_i =  \begin{pmatrix}
\beta_i\\ \gamma_i
\end{pmatrix}, i = 1,2,3,4  \eqlab{PrFields}.
\end{eqnarray} 

Using \eqref{SFields} the regularized system is written as:
\begin{eqnarray}
\begin{aligned}
\dot{x}= &\frac{1}{2}\lp \frac{\alpha_1}{2}{\lp 1+\phi\lp \varepsilon^{-1}z\rp\rp}+\frac{\alpha_4}{2}{\lp 1-\phi\lp \varepsilon^{-1}z\rp\rp}\rp{\lp 1+\psi\lp \varepsilon^{-1}y\rp\rp}\\
&+\frac{1}{2}\lp \frac{\alpha_2}{2}{\lp 1+\phi\lp \varepsilon^{-1}z\rp\rp}+\frac{\alpha_3}{2}{\lp 1-\phi\lp \varepsilon^{-1}z\rp\rp}\rp{\lp 1-\psi\lp \varepsilon^{-1}y\rp\rp},\\
\dot{y}= &\frac{1}{2}\lp \frac{\beta_1}{2}{\lp 1+\phi\lp \varepsilon^{-1}z\rp\rp}+\frac{\beta_4}{2}{\lp 1-\phi\lp \varepsilon^{-1}z\rp\rp}\rp{\lp 1+\psi\lp \varepsilon^{-1}y\rp\rp}\\
&+\frac{1}{2}\lp \frac{\beta_2}{2}{\lp 1+\phi\lp \varepsilon^{-1}z\rp\rp}+\frac{\beta_3}{2}{\lp 1-\phi\lp \varepsilon^{-1}z\rp\rp}\rp{\lp 1-\psi\lp \varepsilon^{-1}y\rp\rp},\\
\dot{z}= &\frac{1}{2}\lp \frac{\gamma_1}{2}{\lp 1+\phi\lp \varepsilon^{-1}z\rp\rp}+\frac{\gamma_4}{2}{\lp 1-\phi\lp \varepsilon^{-1}z\rp\rp}\rp{\lp 1+\psi\lp \varepsilon^{-1}y\rp\rp}\\
&+\frac{1}{2}\lp \frac{\gamma_2}{2}{\lp 1+\phi\lp \varepsilon^{-1}z\rp\rp}+\frac{\gamma_3}{2}{\lp 1-\phi\lp \varepsilon^{-1}z\rp\rp}\rp{\lp 1-\psi\lp \varepsilon^{-1}y\rp\rp}.
\end{aligned}
\eqlab{RegSysS}
\end{eqnarray}
The above system is singular for $y=\varepsilon=0$ or $z=\varepsilon=0$. As \eqref{Reg1}, it will therefore again be useful to work with two separate time scales. The time $t$ in \eqref{RegSysS} is the \text{slow time} whereas $\tau=t\varepsilon^{-1}$ will be referred to as the \text{fast time}. We then introduce the variables:
\begin{align}
\hat y = \varepsilon^{-1} y  , \quad
\hat z = \varepsilon^{-1} z. 
\eqlab{varhat}
\end{align}
Inserting  equations \eqref{varhat} into \eqref{RegSysS} gives:
 \begin{eqnarray}
 \begin{aligned}
 \dot {x}= &\frac{1}{2}\lp \frac{\alpha_1}{2}{\lp 1+\phi\lp \hat z\rp\rp}+\frac{\alpha_4}{2}{\lp 1-\phi\lp \hat z\rp\rp}\rp{\lp 1+\psi\lp \hat y\rp\rp}\\
 &+\frac{1}{2}\lp \frac{\alpha_2}{2}{\lp 1+\phi\lp \hat z\rp\rp}+\frac{\alpha_3}{2}{\lp 1-\phi\lp \hat z\rp\rp}\rp{\lp 1-\psi\lp \hat y\rp\rp},\\
 \varepsilon \dot{\hat {y}}= &\frac{1}{2}\lp \frac{\beta_1}{2}{\lp 1+\phi\lp \hat z\rp\rp}+\frac{\beta_4}{2}{\lp 1-\phi\lp \hat z\rp\rp}\rp{\lp 1+\psi\lp \hat y\rp\rp}\\
 &+\frac{1}{2}\lp \frac{\beta_2}{2}{\lp 1+\phi\lp \hat z\rp\rp}+\frac{\beta_3}{2}{\lp 1-\phi\lp \hat z\rp\rp}\rp{\lp 1-\psi\lp \hat y\rp\rp},\\
 \varepsilon \dot{\hat {z}}= &\frac{1}{2}\lp \frac{\gamma_1}{2}{\lp 1+\phi\lp \hat z\rp\rp}+\frac{\gamma_4}{2}{\lp 1-\phi\lp \hat z\rp\rp}\rp{\lp 1+\psi\lp \hat y\rp\rp}\\
 &+\frac{1}{2}\lp \frac{\gamma_2}{2}{\lp 1+\phi\lp \hat z\rp\rp}+\frac{\gamma_3}{2}{\lp 1-\phi\lp \hat z\rp\rp}\rp{\lp 1-\psi\lp \hat y\rp\rp},
 \end{aligned}\eqlab{SFSNew}
 \end{eqnarray}
which is a slow-fast system, in the slow formulation with $x$ being the slow variable and $\hat y$ and $\hat{z}$ being fast. The fast system with respect to the fast time $\tau$ is: 
\begin{eqnarray}
\begin{aligned}
{x}'= &\varepsilon\bigg(\frac{1}{2}\lp \frac{\alpha_1}{2}{\lp 1+\phi\lp \hat z\rp\rp}+\frac{\alpha_4}{2}{\lp 1-\phi\lp \hat z\rp\rp}\rp{\lp 1+\psi\lp \hat y\rp\rp}\\
&+\frac{1}{2}\lp \frac{\alpha_2}{2}{\lp 1+\phi\lp \hat z\rp\rp}+\frac{\alpha_3}{2}{\lp 1-\phi\lp \hat z\rp\rp}\rp{\lp 1-\psi\lp \hat y\rp\rp}\bigg),\\
\hat {y}'= &\frac{1}{2}\lp \frac{\beta_1}{2}{\lp 1+\phi\lp \hat z\rp\rp}+\frac{\beta_4}{2}{\lp 1-\phi\lp \hat z\rp\rp}\rp{\lp 1+\psi\lp \hat y\rp\rp}\\
&+\frac{1}{2}\lp \frac{\beta_2}{2}{\lp 1+\phi\lp \hat z\rp\rp}+\frac{\beta_3}{2}{\lp 1-\phi\lp \hat z\rp\rp}\rp{\lp 1-\psi\lp \hat y\rp\rp},\\
\hat {z}'= &\frac{1}{2}\lp \frac{\gamma_1}{2}{\lp 1+\phi\lp \hat z\rp\rp}+\frac{\gamma_4}{2}{\lp 1-\phi\lp \hat z\rp\rp}\rp{\lp 1+\psi\lp \hat y\rp\rp}\\
&+\frac{1}{2}\lp \frac{\gamma_2}{2}{\lp 1+\phi\lp \hat z\rp\rp}+\frac{\gamma_3}{2}{\lp 1-\phi\lp \hat z\rp\rp}\rp{\lp 1-\psi\lp \hat y\rp\rp}.
\end{aligned}\eqlab{SFFNew}
\end{eqnarray}

\begin{remark} \remlab{dummy}
We note that $(\hat y,\hat z)={\lambda}$ in the dummy system in \protect\cite[Definition 4.1]{jeffrey2014a} for $m=2$ when $\phi=\psi =1$.  
\end{remark}

Setting $\varepsilon=0$ in \eqref{SFSNew} gives the reduced problem:
\begin{eqnarray}
\begin{aligned}
\dot {x}= &\frac{1}{2}\lp \frac{\alpha_1}{2}{\lp 1+\phi\lp \hat z\rp\rp}+\frac{\alpha_4}{2}{\lp 1-\phi\lp \hat z\rp\rp}\rp{\lp 1+\psi\lp \hat y\rp\rp}\\
&+\frac{1}{2}\lp \frac{\alpha_2}{2}{\lp 1+\phi\lp \hat z\rp\rp}+\frac{\alpha_3}{2}{\lp 1-\phi\lp \hat z\rp\rp}\rp{\lp 1-\psi\lp \hat y\rp\rp},\\
0= &\frac{1}{2}\lp \frac{\beta_1}{2}{\lp 1+\phi\lp \hat z\rp\rp}+\frac{\beta_4}{2}{\lp 1-\phi\lp \hat z\rp\rp}\rp{\lp 1+\psi\lp \hat y\rp\rp}\\
&+\frac{1}{2}\lp \frac{\beta_2}{2}{\lp 1+\phi\lp \hat z\rp\rp}+\frac{\beta_3}{2}{\lp 1-\phi\lp \hat z\rp\rp}\rp{\lp 1-\psi\lp \hat y\rp\rp},\\
0= &\frac{1}{2}\lp \frac{\gamma_1}{2}{\lp 1+\phi\lp \hat z\rp\rp}+\frac{\gamma_4}{2}{\lp 1-\phi\lp \hat z\rp\rp}\rp{\lp 1+\psi\lp \hat y\rp\rp}\\
&+\frac{1}{2}\lp \frac{\gamma_2}{2}{\lp 1+\phi\lp \hat z\rp\rp}+\frac{\gamma_3}{2}{\lp 1-\phi\lp \hat z\rp\rp}\rp{\lp 1-\psi\lp \hat y\rp\rp},
\end{aligned}\eqlab{redprob}
\end{eqnarray}
and setting $\varepsilon=0$ in \eqref{SFFNew} gives the layer problem:
\begin{eqnarray}
\begin{aligned}
x'=&0,\\
{\hat{y}}'=&\frac{1}{2}\lp \frac{\beta_1}{2}{\lp 1+\phi\lp\hat{z}\rp\rp}+\frac{\beta_4}{2}{\lp 1-\phi\lp \hat{z}\rp\rp}\rp{\lp 1+\psi\lp \hat{y}\rp\rp}+\frac{1}{2}\lp \frac{\beta_2}{2}{\lp 1+\phi\lp \hat{z}\rp\rp}+\frac{\beta_3}{2}{\lp 1-\phi\lp \hat{z}\rp\rp}\rp{\lp 1-\psi\lp \hat{y}\rp\rp},\\
{\hat{z}}'=&\frac{1}{2}\lp \frac{\gamma_1}{2}{\lp 1+\phi\lp\hat{z}\rp\rp}+\frac{\gamma_4}{2}{\lp 1-\phi\lp \hat{z}\rp\rp}\rp{\lp 1+\psi\lp \hat{y}\rp\rp}+\frac{1}{2}\lp \frac{\gamma_2}{2}{\lp 1+\phi\lp \hat{z}\rp\rp}+\frac{\gamma_3}{2}{\lp 1-\phi\lp \hat{z}\rp\rp}\rp{\lp 1-\psi\lp \hat{y}\rp\rp}.\\
\end{aligned} 
\eqlab{layprob}
\end{eqnarray}
By assumption (A) all $\alpha_i=\alpha_i(x)$, $\beta_i=\beta_i(x)$ and $\gamma_i=\gamma_i(x)$ (as opposed to $\alpha_i(x,0,0),\,\beta_i(x,0,0),\,\gamma_i(x,0,0)$, recall \eqref{later}). Notice that the above layer problem can be written as:
\begin{align}
\begin{pmatrix}
\hat{y}' \\ \hat{z}'
\end{pmatrix} &= \tilde{F}_x(\psi\lp \hat{y}\rp,\phi\lp \hat{z}\rp),
\eqlab{genprob}
\end{align}
and $x'=0$ 
using \eqref{FxTil}.
The critical manifold $C_0$, as the set of equilibria of \eqref{layprob}, can therefore be written in the following form
\begin{align}
C_0 =\lb(x,\hat y,\hat z)~\big\vert~ \tilde{F}_x\lp\psi\lp \hat{y}\rp, \phi\lp \hat{z}\rp\rp = \lp  0,0\rp\rb.
\eqlab{C0}
\end{align}
Generically, $C_0$ is $1D$. Furthermore, it is normally hyperbolic if the eigenvalues of the Jacobian matrix $\textbf{J} = \textbf{J}(\textbf{x})$ of the fast subsystem have non-zero real part. It is attracting (repelling) if both real parts are negative (at least one real part is positive). Finally, it is of saddle type if the eigenvalues are non-zero and of opposite sign.  

For $(x,\hat{y}_*,\hat{z}_*)\in C_0$ the Jacobian matrix of the fast subsystem \eqref{genprob} can be expressed as:
\begin{eqnarray}
{\bf J} = {\textbf{D}{\tilde{F}_x}}{\bf P},
\eqlab{Jac}
\end{eqnarray}
where ${\textbf{D}{\tilde{F}_x}}$ is the Jacobian matrix of $\tilde{F}_x\lp\psi,\phi\rp$, evaluated at $\psi_* = \psi(\hat y_*), \phi_*=\phi(\hat z_*)$,
and where $\textbf{P} = \tl{diag}\lp \psi'_*, \phi'_*\rp$, with $\psi'_*= \psi'(\hat{y}_*)$, $\phi'_*= \phi'(\hat{z}_*)$.
\section{Definition of the sliding flow on $\Lambda$: Extending the Filippov Convention}\seclab{definition}
Analogously to the correspondence between the sliding vector field on a codimension-1 discontinuity set and the reduced problem on a normally hyperbolic critical manifold, recall \thmref{holy}, we will use the reduced problem \eqref{redprob} on $C_0$, obtained from \eqref{SFSNew}$_{\varepsilon=0}$,
to define the sliding vector-field on $\Lambda$. 
For this, let
\begin{align}
\begin{aligned}
\sigma_\psi(x) = \frac{ 1+\psi_*(x)}{2}, \quad
\sigma_\phi(x) = \frac{ 1+\phi_*(x)}{2},
\end{aligned}\eqlab{sigmaphis}
\end{align} 
where $(\psi_*(x), \phi_*(x))\in (-1,1)^2$ are such that $\tilde{F}_x\lp \psi_*(x), \phi_*(x) \rp = \lp 0,0\rp$. 
Then, from \eqref{redprob} follows that the dynamics on $C_0$ is:
\begin{eqnarray}
\dot{x}&=
\lp\sigma_\psi{\sigma_\phi}\rp\alpha_1 + \lp \lp 1-{\sigma_\psi}\rp\sigma_\phi\rp\alpha_2+\lp \lp 1-{\sigma_\psi}\rp\lp 1-{\sigma_\phi}\rp\rp \alpha_3+\lp {\sigma_\psi}\lp 1-{\sigma_\phi}\rp\rp\alpha_4, \eqlab{c0dyn}
\end{eqnarray}
and the coefficients $\lp\sigma_\phi,\sigma_\psi\rp\in\lp 0,1\rp^2$ can be calculated explicitly based on ${\tilde{X}}_{1-4}$. 
\begin{proposition}
Consider \begin{gather}
\begin{gathered}
\textnormal{A} = \det \lp \tilde{X}_1 ~ \tilde{X}_2\rp
~,~\textnormal{B} = \det \lp \tilde{X}_4 ~ \tilde{X}_2 \rp+\det \lp \tilde{X}_1 ~ \tilde{X}_3 \rp
~,~{\Gamma} = \det \lp \tilde{X}_4 ~ \tilde{X}_3 \rp,	\\
\Delta = \textnormal{B}^2 -4\textnormal{A}\Gamma,
\end{gathered}\eqlab{delta}
\end{gather}
and let $\lp\sigma_{\psi},\sigma_{\phi}\rp\in (0,1)^2$ be so that $\psi_*\in (-1,1)$ and $\phi_*\in (-1,1)$ in \eqref{sigmaphis} satisfy $\tilde F_x(\psi_*,\phi_*) = (0,0)$. Then $(x,\hat y_*,\hat z_*)\in C_0$ where $\hat y_*=\psi^{-1}(\psi_*),\,\hat z_*=\phi^{-1}(\phi_*)$. Furthermore, if $\Delta\ge 0$ and $A+\Gamma-B\ne 0$, then the pair $\lp \sigma_\psi,\sigma_\phi \rp$ is given by either of the following expressions
\begin{align*}
 \lp\sigma_{\psi}^+,\sigma_{\phi}^+\rp,\quad \lp\sigma_{\psi}^-,\sigma_{\phi}^-\rp,
\end{align*}	
where
\begin{align}
\sigma_{\phi}^{(\pm)} = \frac{2\Gamma - \textnormal{B}\pm\sqrt\Delta}{2\lp \textnormal{A}+\Gamma -\textnormal{B}\rp},\qquad \sigma_\psi^{(\pm)} = \frac{\beta_2\sigma_\phi^{(\pm)}+\beta_3\lp 1-\sigma_\phi^{(\pm)}\rp}{\lp \beta_2-\beta_1\rp\sigma_\phi^{(\pm)}+\lp \beta_3-\beta_4\rp\lp 1-\sigma_\phi^{(\pm)}\rp}.
\eqlab{sigmas}
\end{align}
If $\textnormal{A}+\Gamma-\textnormal{B} = 0$, then
\begin{align}
\sigma_\phi = \frac{{\Gamma}}{2\Gamma-\textnormal{B}},\qquad \sigma_\psi= \frac{\beta_2\sigma_\phi+\beta_3\lp 1-\sigma_\phi\rp}{\lp \beta_2-\beta_1\rp\sigma_\phi+\lp \beta_3-\beta_4\rp\lp 1-\sigma_\phi\rp}. \eqlab{sigmassame}
\end{align}
\proplab{sigmas}
\end{proposition}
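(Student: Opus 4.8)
The plan is to solve the two scalar equations $\tilde F_x(\psi_*,\phi_*)=(0,0)$ for the pair $(\sigma_\psi,\sigma_\phi)$ by reducing them to a single quadratic equation. Writing $\sigma_\psi=(1+\psi_*)/2$, $\sigma_\phi=(1+\phi_*)/2$ and expanding \eqref{FxTil}, the equilibrium condition becomes the bilinear relation
\[
\sigma_\psi\sigma_\phi\,\tilde X_1+(1-\sigma_\psi)\sigma_\phi\,\tilde X_2+(1-\sigma_\psi)(1-\sigma_\phi)\,\tilde X_3+\sigma_\psi(1-\sigma_\phi)\,\tilde X_4=(0,0).
\]
Collecting the terms that are affine in $\sigma_\psi$, this reads $\sigma_\psi\,u(\sigma_\phi)+(1-\sigma_\psi)\,v(\sigma_\phi)=(0,0)$, where $u(t):=t\tilde X_1+(1-t)\tilde X_4$ and $v(t):=t\tilde X_2+(1-t)\tilde X_3$ are vectors in $\mathbb R^2$.

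First I would observe that this identity says the vector $(\sigma_\psi,1-\sigma_\psi)^T$ lies in the kernel of the $2\times2$ matrix $\big(u(\sigma_\phi)\ \ v(\sigma_\phi)\big)$; since its two entries sum to $1\neq0$, the kernel must be nontrivial, i.e.\ $\det\big(u(\sigma_\phi)\ \ v(\sigma_\phi)\big)=0$. Expanding this determinant by bilinearity of $\det$ and using the abbreviations in \eqref{delta} one gets
\[
\det\big(u(t)\ \ v(t)\big)=\textnormal{A}\,t^2+\textnormal{B}\,t(1-t)+\Gamma\,(1-t)^2=(\textnormal{A}+\Gamma-\textnormal{B})\,t^2+(\textnormal{B}-2\Gamma)\,t+\Gamma .
\]
This is the equation that $\sigma_\phi$ must satisfy. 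When $\textnormal{A}+\Gamma-\textnormal{B}\neq0$ it is a genuine quadratic, and the quadratic formula together with the identity $(\textnormal{B}-2\Gamma)^2-4(\textnormal{A}+\Gamma-\textnormal{B})\Gamma=\textnormal{B}^2-4\textnormal{A}\Gamma=\Delta$ yields the two roots $\sigma_\phi^{(\pm)}$ of \eqref{sigmas}, which are real precisely when $\Delta\geq0$. When $\textnormal{A}+\Gamma-\textnormal{B}=0$ the equation degenerates to the affine one $(\textnormal{B}-2\Gamma)\sigma_\phi+\Gamma=0$, with unique solution $\sigma_\phi=\Gamma/(2\Gamma-\textnormal{B})$, which is \eqref{sigmassame}.

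Next, with $\sigma_\phi$ determined, I recover $\sigma_\psi$ from the first (that is, the $\beta$-) component of $\sigma_\psi u(\sigma_\phi)+(1-\sigma_\psi)v(\sigma_\phi)=(0,0)$, i.e.\ from $\sigma_\psi\big(\beta_1\sigma_\phi+\beta_4(1-\sigma_\phi)\big)+(1-\sigma_\psi)\big(\beta_2\sigma_\phi+\beta_3(1-\sigma_\phi)\big)=0$; solving this scalar affine equation for $\sigma_\psi$ produces exactly the expression in \eqref{sigmas} (and likewise in \eqref{sigmassame}). The second ($\gamma$-) component is then automatically satisfied, since we arranged $u(\sigma_\phi)$ and $v(\sigma_\phi)$ to be linearly dependent, so a combination of them vanishing in one coordinate vanishes in both. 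Finally $\psi_*=2\sigma_\psi-1\in(-1,1)$ and $\phi_*=2\sigma_\phi-1\in(-1,1)$ lie in the range of the strictly increasing functions $\psi,\phi$ of \defnref{phiFuncs}, so $\hat y_*=\psi^{-1}(\psi_*)$ and $\hat z_*=\phi^{-1}(\phi_*)$ are well defined and $(x,\hat y_*,\hat z_*)\in C_0$ by the characterization \eqref{C0}.

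I expect the only point requiring care to be the back-substitution step: one must check that the denominator $(\beta_2-\beta_1)\sigma_\phi+(\beta_3-\beta_4)(1-\sigma_\phi)$ appearing in \eqref{sigmas}--\eqref{sigmassame} does not vanish. Under the standing hypothesis that $\psi_*,\phi_*\in(-1,1)$ solve $\tilde F_x=(0,0)$, the vectors $u(\sigma_\phi)$ and $v(\sigma_\phi)$ are both nonzero (else $\sigma_\psi\in\{0,1\}$, forcing $\psi_*\in\{-1,1\}$) and linearly dependent but distinct, so at least one of their two coordinates gives a nonzero denominator; in the nongeneric case that the first coordinate fails, one simply repeats the computation with the $\gamma$-components in place of the $\beta$-components. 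Everything else is a routine bilinear-algebra computation.
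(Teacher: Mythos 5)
Your proof is correct and follows essentially the same route as the paper's: eliminate $\sigma_\psi$ from the two bilinear equilibrium equations to obtain the quadratic $\textnormal{A}\sigma_\phi^2+\textnormal{B}\sigma_\phi(1-\sigma_\phi)+\Gamma(1-\sigma_\phi)^2=0$, i.e. $(\textnormal{A}+\Gamma-\textnormal{B})\sigma_\phi^2+(\textnormal{B}-2\Gamma)\sigma_\phi+\Gamma=0$, solve it (separating the degenerate case $\textnormal{A}+\Gamma-\textnormal{B}=0$), and back-substitute for $\sigma_\psi$ from the $\beta$-component. The only difference is in packaging, and it is a welcome one: you carry out the elimination via the vanishing of $\det\lp u(\sigma_\phi)~v(\sigma_\phi)\rp$ forced by the nontrivial kernel vector $(\sigma_\psi,1-\sigma_\psi)^T$, which avoids the divisions used in the paper's relation \eqref{pairing}, and you explicitly flag the possible vanishing of the back-substitution denominator (falling back on the $\gamma$-components when needed), a point the paper's proof passes over in silence.
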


\begin{proof}
	For $\sigma_\psi = \lp 1+\psi_*\rp/2$, $\sigma_\phi = \lp 1+\phi_*\rp/2$,	where $(\psi_*,\phi_*)$ is a solution of the algebraic equations of \eqref{redprob}, the  algebraic equations  of \eqref{redprob} are written as
	\begin{align}
	\begin{aligned}
	\lp {\beta_1}\sigma_\phi+{\beta_4}{\lp 1-\sigma_\phi \rp}\rp{\sigma_\psi}+\lp {\beta_2}\sigma_\phi+{\beta_3}{\lp 1-\sigma_\phi \rp}\rp{\lp 1-\sigma_\psi\rp}&=0,\\
	\lp {\gamma_1}\sigma_\phi+{\gamma_4}{\lp 1-\sigma_\phi \rp}\rp{\sigma_\psi}+\lp {\gamma_2}\sigma_\phi+{\gamma_3}{\lp 1-\sigma_\phi \rp}\rp{\lp 1-\sigma_\psi\rp}&=0,
	\end{aligned}
	\eqlab{finds}
	\end{align}
	from which follows that
	\begin{align}
	-\frac{\sigma_\psi}{\lp 1-\sigma_\psi\rp}&=
	\frac{\lp {\beta_2}\sigma_\phi+{\beta_3}{\lp 1-\sigma_\phi \rp}\rp}{\lp {\beta_1}\sigma_\phi+{\beta_4}{\lp 1-\sigma_\phi \rp}\rp}
	= 
	\frac{\lp {\gamma_2}\sigma_\phi+{\gamma_3}{\lp 1-\sigma_\phi \rp}\rp}{\lp {\gamma_1}\sigma_\phi+{\gamma_4}{\lp 1-\sigma_\phi \rp}\rp} . \eqlab{pairing}
	\end{align}
	We can therefore eliminate $\sigma_\psi$ and obtain the following quadratic equation for $\sigma_\phi$
	\begin{gather*}
	\lp \beta_1\gamma_2-\beta_2\gamma_1\rp\sigma_\phi^2+\lp \beta_4\gamma_2-\beta_2\gamma_4+\beta_1\gamma_3-\beta_3\gamma_1\rp\lp 1-\sigma_\phi\rp\sigma_\phi+\lp \beta_4\gamma_3-\beta_3\gamma_4\rp\lp 1-\sigma_\phi\rp^2=0,
	\end{gather*}
	and can be therefore written as (see \eqref{PrFields})
	\begin{align*}
	\underbrace{\det\lp \tilde{X}_1~ \tilde{X}_2\rp}_{\tl{A}}\sigma_\phi^2+\underbrace{\lb \det\lp \tilde{X}_4~ \tilde{X}_2\rp +\det\lp \tilde{X}_1~ \tilde{X}_3\rp\rb}_{\tl{B}}\lp 1-\sigma_\phi\rp\sigma_\phi+\underbrace{\det\lp \tilde{X}_4~  \tilde{X}_3\rp}_{\Gamma}\lp 1-\sigma_\phi\rp^2=0,
	\end{align*}
	where $\tilde{X}_i$ are column vectors. We then obtain the quadratic equation
	\begin{align}
	\lp \tl{A}+{\Gamma}-\tl{B} \rp\sigma_\phi^2+\lp \tl{B}-2\Gamma\rp\sigma_\phi+\Gamma=0.
	\eqlab{quadratic}
	\end{align}
	Expressions \eqref{sigmas} and \eqref{sigmassame} follow from equations \eqref{pairing} and \eqref{quadratic}. 
\end{proof}

The quantities $\tl{A}+\Gamma-\tl{B}$ and $\tl{B}-2\Gamma$ are sums of oriented areas of parallelograms that are formed by the vectors $\tilde{X}_i$. \figref{proj1} illustrates some examples of PWS vector fields for which $\tl{A}+\Gamma-\tl{B} = 0$.

We now propose the following alternative definition of a sliding vector-field.
\begin{definition}
	\textbf{(Sliding flow as the dynamics on the critical manifold of the regularized system)}
	Consider the PWS system $(X_1,X_2, X_3,X_4)$ on $\mathcal Q_1 \cup \mathcal Q_2\cup \mathcal Q_3 \cup \mathcal Q_4$. The sliding region $\Lambda^{sl}\subset\Lambda$ is then defined as:
	\begin{align*}
	\Lambda^{sl} = \lb (x,0,0)\in \Lambda ~\big\lvert~ \exists ~\lp \sigma_\psi(x), \sigma_\phi(x)\rp\in(0,1)^2 ~\rb.
	\end{align*}
	where $\sigma_\psi(x), \sigma_\phi(x)$ are given by \propref{sigmas}, and the \textnormal{sliding vector field} $X^{sl}$ on $\Lambda^{sl}$ is defined by the reduced vector-field on the critical manifold $C_0$ of the slow-fast regularized system \eqref{reg3a}. In details,
	\begin{gather}
	X_{sl}(\textbf x)=\begin{pmatrix}
	\lp\sigma_\psi{\sigma_\phi}\rp\alpha_1 + \lp \lp 1-{\sigma_\psi}\rp\sigma_\phi\rp\alpha_2+\lp \lp 1-{\sigma_\psi}\rp\lp 1-{\sigma_\phi}\rp\rp \alpha_3+\lp {\sigma_\psi}\lp 1-{\sigma_\phi}\rp\rp\alpha_4 \\
	0\\
	0
	\end{pmatrix},\quad 
\textbf{x}=(x,0,0) \in \Lambda^{sl}.\eqlab{Xsl2}
	\end{gather}
The stability of the sliding flow is defined by the stability of the corresponding equilibrium point $(x,\psi^{-1}(\psi_*(x)),\phi^{-1}(\phi_*(x)))$ (recall \eqref{sigmaphis})  of \eqref{layprob}: Let $\lambda_1$ and $\lambda_2$ be the eigenvalues of the Jacobian $\textbf J$ in \eqref{Jac} and suppose that $\text{Re}\,\lambda_i\ne 0$ for $i=1,2$. Then the sliding flow is said to be attracting (repelling) if $\text{Re}\,\lambda_i <0$ ($\text{Re}\,\lambda_i>0$), for $i=1,2$, and of saddle type $\lambda_1\lambda_2<0$.
	\defnlab{slide2}
\end{definition}

Similarly to the case of codimension-1 discontinuities, we make the following important observation.

\begin{theorem}\thmlab{holy2}
A vector-field is a sliding vector-field of \defnref{slide2Fil} if and only if it is a sliding vector-field of \defnref{slide2}.
\thmlab{slideEquiv}
\end{theorem}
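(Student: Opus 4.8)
The plan is to show that the two notions of sliding vector-field coincide by identifying their common algebraic core: both reduce to finding a convex combination of $X_{1-4}$ that is tangent to $\Lambda$, i.e. whose $y$- and $z$-components vanish. First I would take a sliding vector-field $X^{sl}$ in the sense of \defnref{slide2Fil}, so $X^{sl}=\sum_{i=1}^4\nu_i X_i$ with $\nu_i\ge 0$, $\sum\nu_i=1$, and $\tilde X^{sl}=(0,0)$. The key observation is that the barycentric weights produced by the critical manifold in \defnref{slide2}, namely $(\sigma_\psi\sigma_\phi,\,(1-\sigma_\psi)\sigma_\phi,\,(1-\sigma_\psi)(1-\sigma_\phi),\,\sigma_\psi(1-\sigma_\phi))$ with $(\sigma_\psi,\sigma_\phi)\in(0,1)^2$, are exactly the quadruples of nonnegative reals summing to $1$ that admit a \emph{product structure}. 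So the two definitions are not a priori identical; what has to be proved is that every admissible Filippov quadruple can be rewritten in product form (and that, conversely, every product-form quadruple is a valid convex combination, which is immediate since each factor lies in $(0,1)$, giving positivity).

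The heart of the argument is therefore the following: given $(\nu_1,\nu_2,\nu_3,\nu_4)$ nonnegative, summing to one, and satisfying the two linear tangency equations $\sum\nu_i\beta_i=0$, $\sum\nu_i\gamma_i=0$, I would show one can always factor it as above. Set $\sigma_\phi:=\nu_1+\nu_2$ (the ``$z>0$ side'' total weight) and, provided $\sigma_\phi\in(0,1)$, set $\sigma_\psi:=\nu_1/(\nu_1+\nu_2)=\nu_1/\sigma_\phi$. One must then check the cross relation $\nu_1\nu_3=\nu_2\nu_4$, which is what makes the factorization $\nu_3=(1-\sigma_\psi)(1-\sigma_\phi)$, $\nu_4=\sigma_\psi(1-\sigma_\phi)$ consistent. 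This cross relation is not automatic from $\sum\nu_i=1$ alone; it must come from the tangency equations. Concretely, eliminating between $\sum\nu_i\tilde X_i=0$ (two equations, four unknowns, one normalization) leaves a one-parameter family, and I would show that along this family the quantity $\nu_1\nu_3-\nu_2\nu_4$ is forced to vanish because of the determinant structure: rearranging $\sum\nu_i\tilde X_i=0$ gives $\nu_1\tilde X_1+\nu_2\tilde X_2=-(\nu_3\tilde X_3+\nu_4\tilde X_4)$, and taking determinants against suitable $\tilde X_j$ reproduces precisely the quadratic \eqref{quadratic} whose roots are the $\sigma_\phi^{(\pm)}$ of \propref{sigmas}. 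In other words, the Filippov tangency condition, once normalized, is \emph{equivalent} to $\sigma_\phi$ solving \eqref{quadratic} together with $\sigma_\psi$ given by the pairing \eqref{pairing}; this is exactly the content of the proof of \propref{sigmas} read backwards, so I would cite that computation rather than redo it. The degenerate sub-cases — $\mathrm{A}+\Gamma-\mathrm B=0$ (linear instead of quadratic, handled by \eqref{sigmassame}), denominators in \eqref{pairing} vanishing, or $\sigma_\phi\in\{0,1\}$ forcing some $\nu_i=0$ — need to be swept up separately; when $\sigma_\phi\in\{0,1\}$ the sliding vector is in fact a sliding vector of one of the adjacent codimension-1 manifolds $\Pi_i^{sl}$ and is excluded from the interior sliding region in both definitions, so consistency is maintained.

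For the reverse inclusion, starting from \defnref{slide2}: given $(\sigma_\psi,\sigma_\phi)\in(0,1)^2$ with $\tilde F_x(\psi_*,\phi_*)=(0,0)$, the weights $\nu_1=\sigma_\psi\sigma_\phi$ etc. are manifestly nonnegative and sum to $1$ (expand $(\sigma_\psi+(1-\sigma_\psi))(\sigma_\phi+(1-\sigma_\phi))=1$), and the condition $\tilde F_x=(0,0)$ is literally $\sum\nu_i\tilde X_i=(0,0)$ by \eqref{FxTil}, i.e. $X^{sl}=\sum\nu_i X_i\in T_{\textbf x}\Lambda$. Hence $X^{sl}$ is a Filippov sliding vector-field, and its $x$-component \eqref{Xsl2} agrees with \eqref{slide} term by term. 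Finally I would note that the correspondence $(\sigma_\psi,\sigma_\phi)\leftrightarrow(\nu_1,\ldots,\nu_4)$ is a bijection between admissible product-form quadruples and solutions of the Filippov system restricted to the open region, so that ``number of sliding vector-fields'' matches on both sides — this also explains why \defnref{slide2} legitimately \emph{extends} Filippov rather than replacing it.

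The main obstacle, as indicated, is the cross relation $\nu_1\nu_3=\nu_2\nu_4$: it is the one place where the argument is not a triviality, and one has to see clearly that it is precisely the compatibility condition that the two linear tangency equations impose on a quadruple once the normalization is fixed — equivalently, that the map $(\sigma_\psi,\sigma_\phi)\mapsto(\nu_i)$ has image exactly the tangent affine line inside the simplex. I expect this to follow cleanly by reusing the determinant manipulations in the proof of \propref{sigmas}, but care is needed with the various genericity hypotheses ($\mathrm A+\Gamma-\mathrm B\ne0$ versus $=0$, and nonvanishing of the denominators appearing in \eqref{pairing} and in $\sigma_\psi^{(\pm)}$) so that no admissible Filippov combination is missed and no spurious one is introduced.
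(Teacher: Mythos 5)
You have correctly isolated the crux that the paper's own one\--line proof glosses over: \defnref{slide2} produces only weight quadruples with the product structure $\lp\nu_1,\nu_2,\nu_3,\nu_4\rp=\lp\sigma_\psi\sigma_\phi,\,(1-\sigma_\psi)\sigma_\phi,\,(1-\sigma_\psi)(1-\sigma_\phi),\,\sigma_\psi(1-\sigma_\phi)\rp$, i.e.\ those satisfying $\nu_1\nu_3=\nu_2\nu_4$, whereas \defnref{slide2Fil} as literally written admits any convex combination tangent to $\Lambda$. Your forward inclusion is fine and is exactly what the paper does via \eqref{nus}: product weights lie in $(0,1)$, sum to one, and $\tilde F_x(\psi_*,\phi_*)=(0,0)$ is literally the tangency condition.

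The gap is in your resolution of the reverse inclusion, namely the claim that ``along this family the quantity $\nu_1\nu_3-\nu_2\nu_4$ is forced to vanish'', equivalently that the image of $(\sigma_\psi,\sigma_\phi)\mapsto(\nu_i)$ is the whole tangent affine line inside the simplex. This is false. The constraints $\sum_i\nu_i=1$, $\sum_i\nu_i\beta_i=0$, $\sum_i\nu_i\gamma_i=0$ are three linear equations in four unknowns, so their solution set in the simplex is generically a segment, and $\nu_1\nu_3-\nu_2\nu_4$ restricted to that segment is a nonconstant quadratic vanishing at no more than two points. Concretely, for $\tilde X_1=(1,1)$, $\tilde X_2=(-1,1)$, $\tilde X_3=(-1,-1)$, $\tilde X_4=(1,-1)$ the tangent family is $\nu=\lp t,\tfrac12-t,t,\tfrac12-t\rp$ with $t\in\lp 0,\tfrac12\rp$, and $\nu_1\nu_3-\nu_2\nu_4=t-\tfrac14$ vanishes only at $t=\tfrac14$. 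Accordingly, taking determinants of $\nu_1\tilde X_1+\nu_2\tilde X_2=-\lp\nu_3\tilde X_3+\nu_4\tilde X_4\rp$ cannot reproduce \eqref{quadratic} without first assuming the factorization: the derivation of \eqref{pairing} and \eqref{quadratic} in \propref{sigmas} inserts the product form at the outset. What the paper's proof actually establishes (and all that is true) is a bijection between points of $C_0$ and the \emph{bilinear} convex combinations of the canopy of \protect\cite{jeffrey2014a} that are tangent to $\Lambda$; if \defnref{slide2Fil} is read literally, the ``only if'' direction fails, since generically one obtains a one\--parameter family of Filippov sliding vectors of which at most two lie on the canopy. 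Your bookkeeping of the degenerate subcases ($\textnormal{A}+\Gamma-\textnormal{B}=0$, vanishing denominators, $\sigma_\phi\in\{0,1\}$) is sensible but does not repair this central step.
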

\begin{proof}
We write \eqref{Xsl2} as \eqref{slide} by setting
\begin{align}
	{\nu_1(x)} = \sigma_\psi{\sigma_\phi},\quad
	{\nu_2(x)} = \lp 1-{\sigma_\psi}\rp\sigma_\phi,\quad
	{\nu_3(x)} = \lp 1-{\sigma_\psi}\rp\lp 1-{\sigma_\phi}\rp,\quad
	{\nu_4(x)} = {\sigma_\psi}\lp 1-{\sigma_\phi}\rp,
	\eqlab{nus}
	\end{align}
	where $(\sigma_\psi,\sigma_\phi)$ are given in \propref{sigmas}.
	Clearly $\sum_{i}\nu_i(x) = 1$ and $X^{sl}$ is tangent to $\Lambda$ by construction. We can similarly write \eqref{slide} as \eqref{Xsl2} going the other way using \propref{sigmas}. 
\end{proof}

Although \defnref{slide2Fil} and \defnref{slide2} are equivalent in terms the sliding vector field, the concept of its stability is only defined in \defnref{slide2}. 

The sliding flow is expressed in the form \eqref{Xsl2} \textit{under the assumption} that $\nu_{1-4}\in(0,1)$ given by \eqref{nus} exist, see also \protect\cite[Theorem 1]{guglielmi2017a}. In \propref{sigmas}, we give the expressions of the coefficients $(\sigma_\psi,\sigma_\phi)$ of the sliding vector fields in closed form, however we are not yet in a position to know a priory if and how many sliding vector fields exist on $\Lambda$, without calculating  $\sigma_\psi^{(\pm)}$ and $\sigma_\phi^{(\pm)}$. In principle, (see also \protect\cite{jeffrey2014a}) there could exist zero, one or two pairs of coefficients $(\sigma_\psi,\sigma_\phi)$ defining the critical manifold $C_0$, and therefore zero, one or two sliding vector fields defined on $\Lambda$. 

In the following, we will apply geometric approach, using the canopy in \protect\cite{jeffrey2014a}, to derive simple criteria that determine the existence and multiplicity of the sliding flow on $\Lambda$, based only on the smooth vector fields $\tilde{X}_{1-4}$.

\section{The parametric surface induced by the regularization}\seclab{para} The parametrization $F_x:(-1,1)^2\to\mathbb{R}^3$ that is given by \eqref{Fx} and that is induced by the regularized system \eqref{reg3a} defines a surface $\mathcal{S} = F_x(\lp-1,1\rp^2)\subset \mathbb R^3$, as illustrated in 	\figref{projes}.  The boundaries of the smooth surface $\mathcal{S}$ are the straight segments that connect the endpoints of consequent $X_i$, i.e. $\psi\mapsto F(\psi,1)$ is a straight line connecting $X_1$ to $X_2$, $\phi\mapsto F(-1,\phi)$ is a straight line connecting $X_2$ to $X_3$, $\psi\mapsto F(\psi,-1)$ is a straight line connecting $X_3$ to $X_4$, $\phi\mapsto F(1,\phi)$ is a straight line connecting $X_4$ to $X_1$ (see \figref{projes} (a)). Recall that this parametrization is related to the regularized system by:
\begin{eqnarray*}
{X}_\varepsilon\lp x,\varepsilon\hat{y}, \varepsilon\hat{z}\rp = F_x\lp \psi(\hat{y}), \phi(\hat{z})\rp
\end{eqnarray*}
and the subscript $x$ is to denote that every point on $\Lambda$ defines a different surface, since all $X_i$ depend on $x$. Since $F_x$ is bilinear, the surface $\mathcal{S}$ is a doubly ruled surface, and in  case it is a regular surface, it corresponds to a bounded hyperbolic paraboloid. This surface is called canopy in \protect\cite{jeffrey2014a}. 

\begin{figure}[h!]
	~
	\centering
	\begin{subfigure}[b]{0.4\textwidth}
		\centering
		\includegraphics[scale = 0.19]{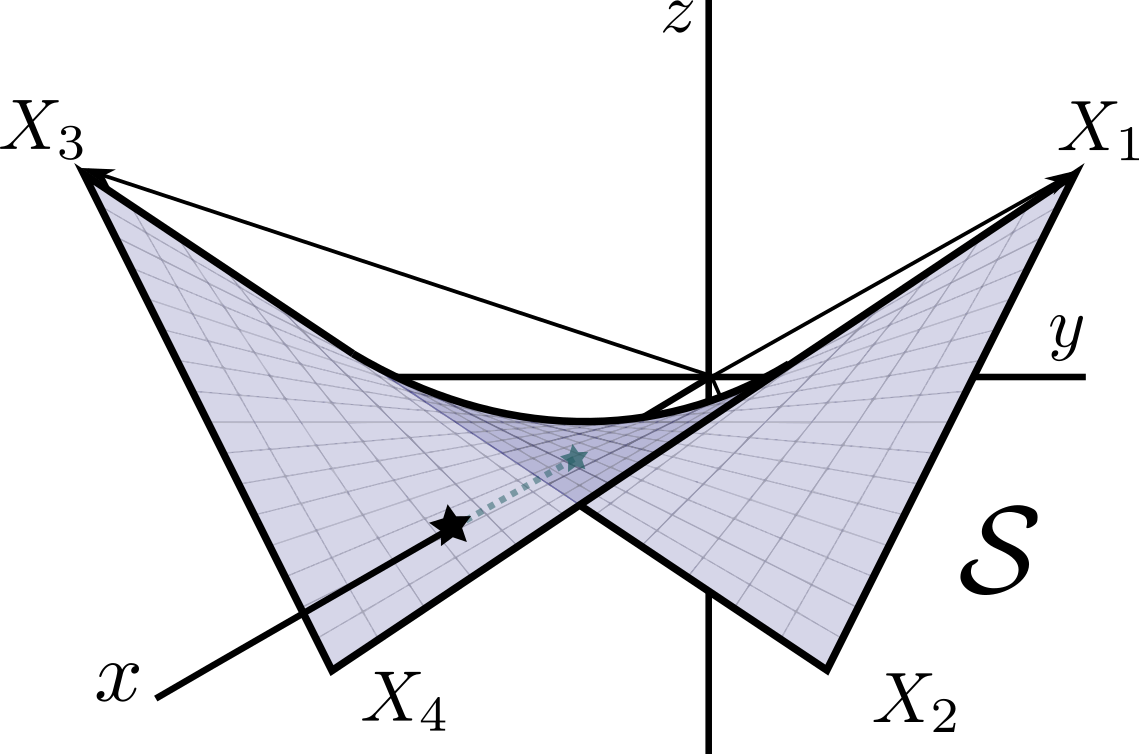}
		\caption{ The parametric surface $\mathcal{S}$ in $\mathbb{R}^3$}
	\end{subfigure}
	~
	\begin{subfigure}[b]{0.5\textwidth}
		\centering
		\includegraphics[scale = 0.19]{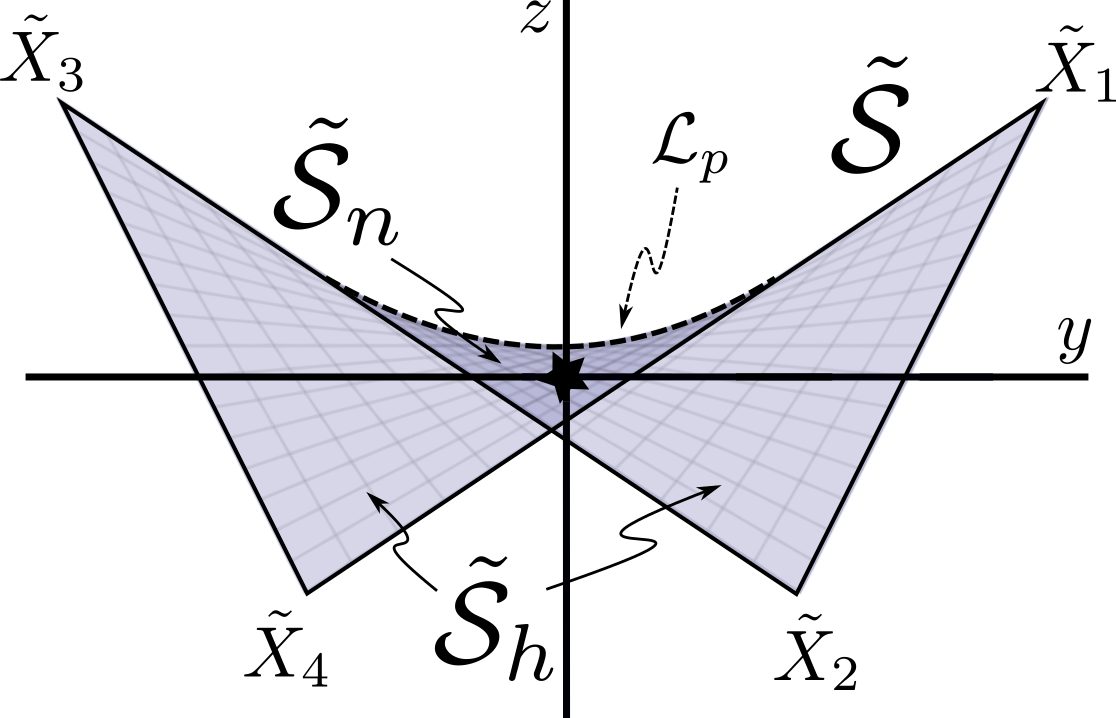}
		\caption{The projection of $\mathcal{S}$ onto the $yz-$plane.}
	\end{subfigure}
	\caption{The parametric surface $\mathcal{S}$ intersects with $\Lambda$ if and only if the origin of the $yz-$plane is contained in $\tilde{\mathcal{S}}$. As illustrated, if the origin is located inside the dark gray region (i.e. the nonhomeomorphic region $\tilde{\mathcal{S}}_n$), then $\mathcal{S}$ intersects with $\Lambda$ at exactly two points. On the other hand, if the origin were located inside the light gray region (i.e. the homeomorphic region $\tilde{\mathcal{S}}_h$), then $\mathcal{S}$ would intersect with $\Lambda$ at only one point. Notice that the edges connecting the end-points $X_i$ in (a) do not belong to the surface $\mathcal S$; they correspond to either $\psi=\pm 1$ or $\phi=\pm 1$. However, when projecting $\mathcal S$ onto the $yz$-plane as seen in (b), the edges connecting $\tilde X_1$ with $\tilde X_4$ and $\tilde X_2$ with $\tilde X_3$ each become divided into two parts; one part, which is the boundary of the subset $\tilde{\mathcal{S}}_n$, see \defnref{tildeSnDefn}, and therefore belongs to $\tilde{\mathcal{S}}$, and another part which is not part of $\tilde{\mathcal{S}}$. }
	\figlab{projes}	
\end{figure}

\begin{proposition}\proplab{jeffrey2014aProp}
	(See also \protect\cite{jeffrey2014a}) A sliding vector field exists at $\textbf{x}=(x,0,0)\in \Lambda$ if and only if $T_\textbf{x}\Lambda\simeq$ the $x$-axis intersects $\mathcal{S}$ at some $F_x(\lp \psi_*,\phi_*\rp)$. The magnitude of the sliding vector is equal to the $x-$component of $F_x\lp \psi_*,\phi_*\rp$.
\end{proposition}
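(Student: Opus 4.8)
The plan is to unwind the definitions, since the statement is essentially a geometric reformulation of \defnref{slide2} through \propref{sigmas}. First I would record the two relevant coordinate splittings: $T_{\textbf x}\Lambda$, regarded as a linear subspace of $\mathbb R^3$, is the $x$-axis $\{(c,0,0)\,:\,c\in\mathbb R\}$, while $\mathcal S=F_x((-1,1)^2)$ with $F_x=(F_x^{(1)},\tilde F_x)$ according to \eqref{Fx} and \eqref{FxTil}, where $F_x^{(1)}$ is the first (the $x$-) component and $\tilde F_x$ collects the $(y,z)$-components. Consequently a point $F_x(\psi_*,\phi_*)\in\mathcal S$ lies on $T_{\textbf x}\Lambda$ if and only if $\tilde F_x(\psi_*,\phi_*)=(0,0)$ for some $(\psi_*,\phi_*)\in(-1,1)^2$.

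Second I would recall that, by \defnref{slide2} together with \propref{sigmas}, a sliding vector field exists at $\textbf x=(x,0,0)$ (i.e.\ $\textbf x\in\Lambda^{sl}$) precisely when there exists $(\sigma_\psi(x),\sigma_\phi(x))\in(0,1)^2$ solving the algebraic part of the reduced problem, which is exactly $\tilde F_x(\psi_*,\phi_*)=(0,0)$ with $\psi_*=2\sigma_\psi-1$ and $\phi_*=2\sigma_\phi-1$ (recall \eqref{sigmaphis}). Under the affine bijection $(\sigma_\psi,\sigma_\phi)\leftrightarrow(\psi_*,\phi_*)$ carrying $(0,1)^2$ onto $(-1,1)^2$, this is precisely the condition isolated in the previous paragraph. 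Hence a sliding vector field exists at $\textbf x$ if and only if $T_{\textbf x}\Lambda$ meets $\mathcal S$ at some $F_x(\psi_*,\phi_*)$. By \thmref{holy2} this is moreover the same sliding vector field as the one produced by the Filippov convention of \defnref{slide2Fil}, so the assertion is independent of which of the two definitions one adopts.

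Third, for the magnitude claim: at an intersection point $F_x(\psi_*,\phi_*)$ one has $\tilde F_x(\psi_*,\phi_*)=(0,0)$, and the surviving component $F_x^{(1)}(\psi_*,\phi_*)$, written with $\sigma_\psi=(1+\psi_*)/2$ and $\sigma_\phi=(1+\phi_*)/2$, is $\sigma_\psi\sigma_\phi\,\alpha_1+(1-\sigma_\psi)\sigma_\phi\,\alpha_2+(1-\sigma_\psi)(1-\sigma_\phi)\,\alpha_3+\sigma_\psi(1-\sigma_\phi)\,\alpha_4$, which is exactly the $x$-component of $X^{sl}(\textbf x)$ appearing in \eqref{c0dyn}--\eqref{Xsl2}; this is the elementary algebraic identity already used to pass from \eqref{redprob} to \eqref{c0dyn}. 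Since $X^{sl}(\textbf x)\in T_{\textbf x}\Lambda$ is parallel to the $x$-axis, this component completely determines $X^{sl}(\textbf x)$, and it is (up to sign) its magnitude.

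The point I expect to require the most care is the bookkeeping at the boundary of the parameter square and the possible non-injectivity of $F_x$. The edges of $\mathcal S$ (where $\psi=\pm1$ or $\phi=\pm1$) are excluded from $\mathcal S$, which matches the open constraint $(\sigma_\psi,\sigma_\phi)\in(0,1)^2$ in \defnref{slide2}, so no spurious intersection on those edges is counted; and although $F_x$ need not be injective on $(-1,1)^2$ (the ruled surface $\mathcal S$ may be degenerate or self-intersecting, cf.\ \secref{para}), the equivalence only asserts the existence of an intersection point, and the magnitude is read off from whichever parameters $(\psi_*,\phi_*)$ realize that point — so neither subtlety affects the statement.
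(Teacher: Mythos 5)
Your proposal is correct and follows essentially the same route as the paper's own proof: both reduce the intersection condition $F_x(\psi_*,\phi_*)\in T_{\textbf x}\Lambda$ to $\tilde F_x(\psi_*,\phi_*)=(0,0)$, identify this with the existence condition in \defnref{slide2} via the affine correspondence \eqref{sigmaphis}, and read off the sliding vector's magnitude from the surviving $x$-component as in \eqref{Xsl2}. Your extra remarks on the excluded boundary edges and the possible non-injectivity of $F_x$ are a welcome clarification but do not change the argument.
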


\begin{proof}
		Consider $\lp \psi_*,\phi_*\rp$ for which $\mathcal{S}$ intersects with $\Lambda$ at $F_x\lp \psi_*,\phi_*\rp$. Then $\lp \psi_*,\phi_*\rp$ are such that $\tilde{F}_x\lp \psi_*,\phi_*\rp=(0,0)$, and we have:
	\begin{align*}
	F_x(\psi_*,\phi_*) = \begin{pmatrix}
	\nu_1\alpha_1+\nu_2\alpha_2+\nu_3\alpha_3+\nu_4\alpha_4\\
	0\\
	0
	\end{pmatrix},
	\end{align*}
	 where the $x-$component of $F_x(\psi_*,\phi_*)$ gives the sliding vector field on $\Lambda$ (see \defnref{slide2} and \thmref{slideEquiv}). On the other hand, consider $\lp \psi_*,\phi_*\rp\in (-1,1)^2$ that via \eqref{sigmaphis} and \defnref{slide2} give a sliding vector field on $\Lambda$. Then again we have $\tilde{F}_x\lp \psi_*,\phi\rp=(0,0)$, and therefore the point ${F}_x\lp \psi_*,\phi_*\rp$ in $\mathcal{S}$ lies on $\Lambda$.
\end{proof}

\subsection{The projection of $\mathcal{S}$: Existence and multiplicity of the sliding flow} 
Let $\tilde{\mathcal{S}}$ be the projection of ${\mathcal{S}}$ onto the $yz$-plane. This region is given $\tilde{F}_x((-1,1)^2)$ using the parametrization $\tilde{F}_x$ in \eqref{FxTil}. Then by \propref{jeffrey2014aProp}, ${\mathcal{S}}$ intersects with $\Lambda$ if and only if the origin of the $yz$-plane is contained within $\tilde {\mathcal{S}}$, and the multiplicity of the sliding vector field depends on where the origin is located in $\tilde {\mathcal{S}}$.

For example, \figref{projes} (a) illustrates a case where $\mathcal{S}$ intersects $\Lambda$ twice. These two points project to the same point in $\tilde{\mathcal{S}}$, as shown in \figref{projes} (b). The collection of all such points in $\mathcal{S}$, i.e. where the projection $\mathcal{S}\to\tilde{\mathcal{S}}$ is two-to-one, makes out the dark shaded region in \figref{projes} (b). Following the definition below we will refer to this region as the nonhomeomorphic region. In contrast, the light gray areas in \figref{projes} (b) all lift (by the preimage of $\mathcal S\rightarrow \tilde {\mathcal S}$) to single points on the set ${\mathcal{S}}$. This set will be called the homeomorphic region. 

\begin{definition}\defnlab{tildeSnDefn}
	The subset $\tilde{\mathcal{S}}_n\subseteq\tilde{\mathcal{S}}$ of points $(y,z)\in \tilde{\mathcal S}$ for which the cardinality $\#F^{-1}(y,z)$ is $2$, will be called the \textnormal{nonhomeomorphic} region of $\tilde{\mathcal S}$ (dark gray area of $\tilde{\mathcal{S}}$ in \figref{projes}). In case $\tilde{\mathcal{S}}_n\neq\emptyset$, the curved line $\mathcal{L}_p$ that is both a boundary of $\tilde{\mathcal{S}}_n$ and of  $\mathcal{\tilde{S}}$ will be called the \textnormal{parabolic line}. The subset $\tilde{\mathcal{S}}_h\subseteq\tilde{\mathcal{S}}$ of points $(y,z)\in \tilde{\mathcal S}\backslash \mathcal L_p$ for which the cardinality $\#F^{-1}(y,z)$ is $1$, will be called the \textnormal{homeomorphic} region of $\tilde{\mathcal{S}}$ (light gray area of $\tilde{\mathcal{S}}$ in \figref{projes}). 
%
%
%
%
\end{definition}

Notice that the restriction $\tilde{F}_{x,n}=\tilde{F}_{x}\vert_{\tilde{F}_x^{-1}\lp \tilde{S}_n\rp}:\tilde F_x^{-1}\lp \tilde{S}_n\rp\to\tilde{S}_n$ is two-to-one whereas $\tilde{F}_{x,h}=\tilde{F}_{x}\vert_{\tilde{F}_x^{-1}\lp \tilde{S}_h\rp}:\tilde{F}_x^{-1}\lp \tilde{S}_h\rp\to\tilde{S}_h$ is a homeomorphism. Notice also that $\tilde{\mathcal S} = \tilde{\mathcal S}_n \sqcup \mathcal L_p\sqcup \tilde{\mathcal S}_h$ and that only $\tilde S_n$ is open in general. 
Based on the above, we state the following corollary concerning the existence and multiplicity of the sliding flow on $\Lambda$.

\begin{corollary} \textnormal{(Existence and multiplicity of the sliding flow on $\Lambda$)}
	If the origin of the $yz$-plane is contained in $\tilde{\mathcal{S}}_h\sqcup \mathcal L_p$, then there exists a unique sliding vector field on $\Lambda$. If the origin of the $yz$-plane is contained in $\tilde{\mathcal{S}}_n$, then there exists a pair of sliding vector fields. If the origin of the $yz$-plane is not contained in $\tilde{\mathcal{S}}$, then there exists no sliding vector field on $\Lambda$.
	\corlab{cor1}
\end{corollary}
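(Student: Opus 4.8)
The plan is to read off the corollary directly from \propref{jeffrey2014aProp} together with \defnref{tildeSnDefn}. By \propref{jeffrey2014aProp}, a sliding vector field exists at $\textbf x=(x,0,0)\in\Lambda$ precisely when the $x$-axis meets $\mathcal S$, i.e.\ when $\tilde F_x(\psi_*,\phi_*)=(0,0)$ for some $(\psi_*,\phi_*)\in(-1,1)^2$; equivalently, when the origin $(0,0)$ of the $yz$-plane lies in $\tilde{\mathcal S}=\tilde F_x((-1,1)^2)$. Moreover, each such solution pair $(\psi_*,\phi_*)$ corresponds via \eqref{sigmaphis} and \eqref{nus} to a distinct sliding vector field (the $x$-component of the corresponding point $F_x(\psi_*,\phi_*)$), and conversely. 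Hence the number of sliding vector fields on $\Lambda$ equals the cardinality $\#\tilde F_x^{-1}(0,0)$, and the proof reduces to a bookkeeping of this cardinality against the decomposition $\tilde{\mathcal S}=\tilde{\mathcal S}_n\sqcup\mathcal L_p\sqcup\tilde{\mathcal S}_h$ of \defnref{tildeSnDefn}.

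First I would treat the case $(0,0)\notin\tilde{\mathcal S}$: then $\tilde F_x^{-1}(0,0)=\emptyset$, so by \propref{jeffrey2014aProp} there is no sliding vector field. Next, if $(0,0)\in\tilde{\mathcal S}_n$, then by definition $\#\tilde F_x^{-1}(0,0)=2$, giving a pair of sliding vector fields (two distinct pairs $(\sigma_\psi,\sigma_\phi)$, hence two distinct reduced vector fields in the sense of \defnref{slide2}). Finally, if $(0,0)\in\tilde{\mathcal S}_h\sqcup\mathcal L_p$, then $\#\tilde F_x^{-1}(0,0)=1$: on $\tilde{\mathcal S}_h$ this is immediate from the definition, and on $\mathcal L_p$ one uses that $\tilde F_x^{-1}(\mathcal L_p)$ consists of the single fold point where the two preimage branches of $\tilde F_{x,n}$ merge (the parabolic line being the fold locus of the quadratic map $\tilde F_x$), so the preimage is again a single point. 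In either subcase this yields a unique sliding vector field.

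The only place where any genuine argument (rather than unwinding definitions) is needed is the claim that over $\mathcal L_p$ the preimage is a single point rather than, say, an arc; this is where I expect the main, though minor, obstacle. It follows from the structure of $\tilde F_x$ as a (bi-)linear-in-$(\psi,\phi)$, hence degree-two, map: the equation $\tilde F_x(\psi_*,\phi_*)=(0,0)$ reduces — exactly as in the proof of \propref{sigmas} — to the single quadratic \eqref{quadratic} in $\sigma_\phi$, with $\sigma_\psi$ then determined rationally by \eqref{pairing}. A point $(y,z)$ of $\tilde{\mathcal S}$ lies in $\tilde{\mathcal S}_n$ when the corresponding quadratic has two roots in $(0,1)$ mapping to interior $(\psi_*,\phi_*)\in(-1,1)^2$, in $\tilde{\mathcal S}_h$ when it has one such admissible root, and on $\mathcal L_p$ when it has a double root (discriminant zero, $\Delta=0$ in the notation of \propref{sigmas}) — in all cases the root set, hence the fibre $\tilde F_x^{-1}(y,z)$, has the asserted cardinality. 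Specializing $(y,z)=(0,0)$ then gives the three cases of the corollary, and no further computation is required.
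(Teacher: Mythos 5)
Your proof is correct and follows essentially the same route as the paper, which states \corref{cor1} without proof as an immediate consequence of \propref{jeffrey2014aProp} and \defnref{tildeSnDefn}; your unwinding of the fibre cardinality $\#\tilde F_x^{-1}(0,0)$ over the decomposition $\tilde{\mathcal S}=\tilde{\mathcal S}_n\sqcup\mathcal L_p\sqcup\tilde{\mathcal S}_h$ is exactly the intended argument. Your extra care over the $\mathcal L_p$ case (the double root of the quadratic \eqref{quadratic} when $\Delta=0$) is a genuine point the paper leaves implicit, and you handle it correctly.
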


In order to be able to derive geometric criteria on the existence and multiplicity of the sliding flow, it is essential to distinguish among the various possible shapes of $\tilde{\mathcal{S}}$.

\subsection{Distinguishing the projections}
Since $\tilde{\mathcal{S}}$ is the projection of a bounded hyperbolic paraboloid, the three possible shapes of $\tilde{\mathcal{S}}$, depending on the rotation of $\mathcal{S}$, are the ones illustrated in the first row of \figref{int0}. The second row contains the quadrilaterals that we obtain by connecting the endpoints of subsequent $\tilde{X}_{i}$ and that are associated with the above projections. Using these quadrilaterals, we will be able to distinguish among these projections based on $\tilde{X}_{1-4}$.

\begin{figure}[h!]
	\centering
	\begin{subfigure}[b]{0.3\textwidth}
		\centering
		\includegraphics[scale = 0.16]{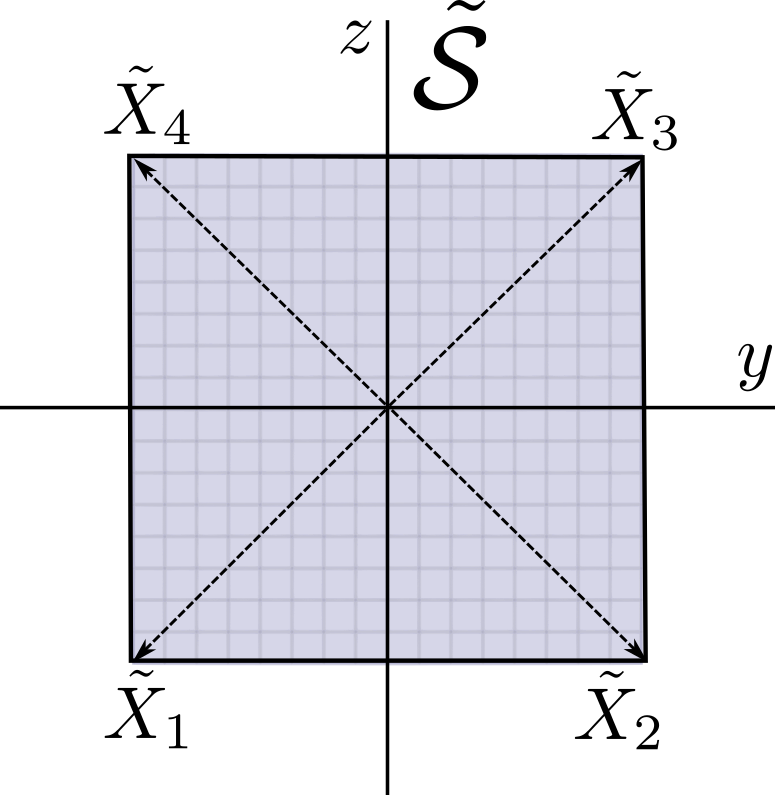}
	\end{subfigure}
	~
	\begin{subfigure}[b]{0.3\textwidth}
		\centering
		\includegraphics[scale = 0.16]{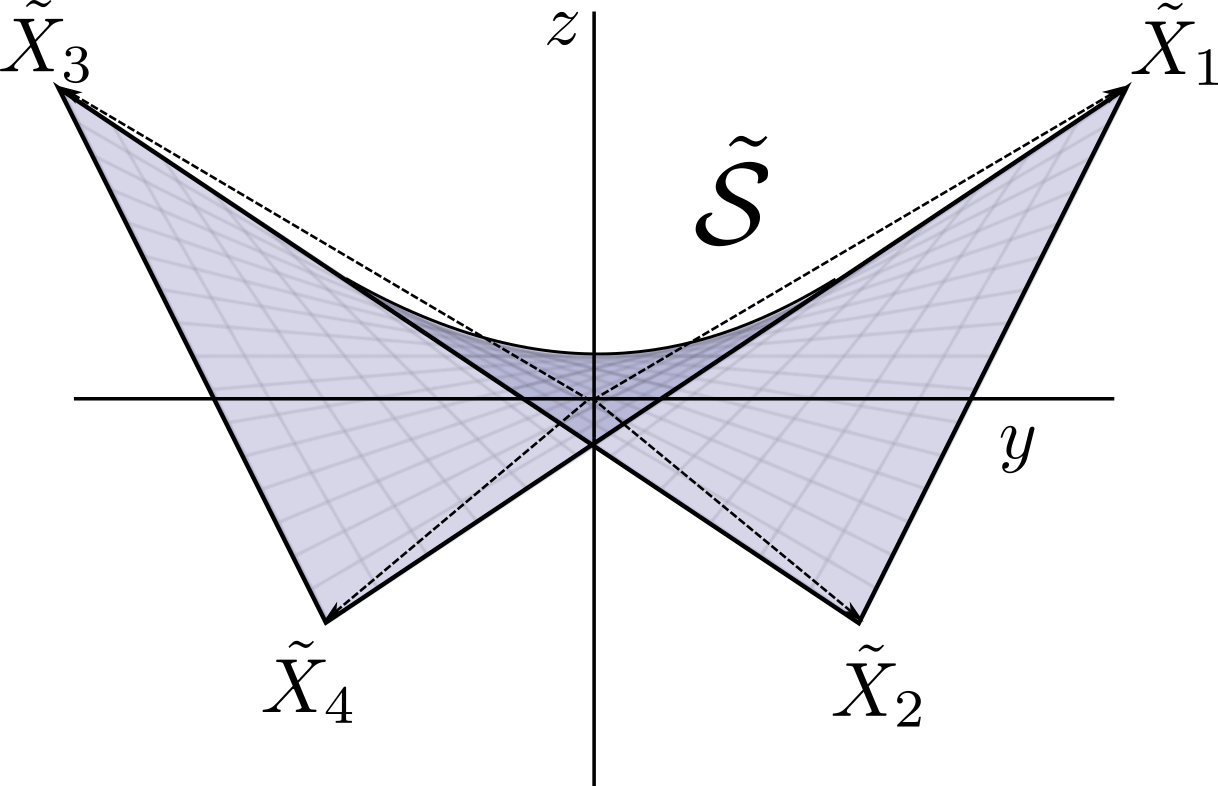}
	\end{subfigure}
	~
	\begin{subfigure}[b]{0.3\textwidth}
		\centering
		\includegraphics[scale = 0.16]{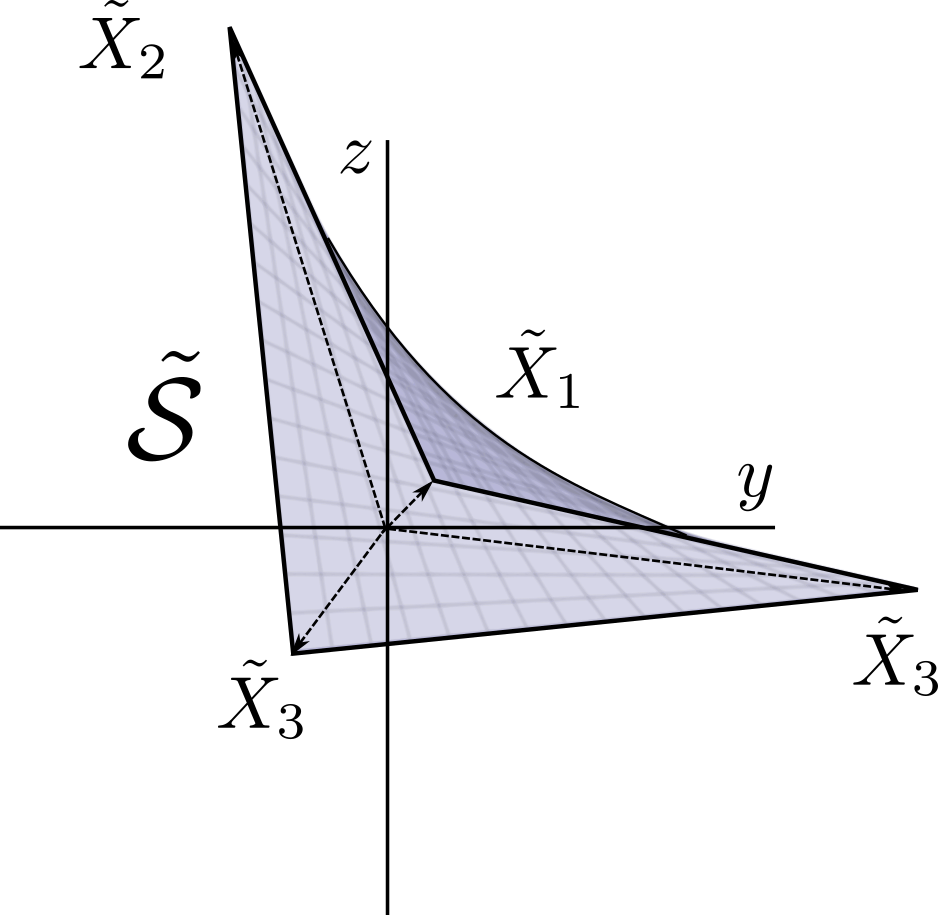}
	\end{subfigure}\\
	\begin{subfigure}[b]{0.3\textwidth}
		\centering
			\includegraphics[scale = 0.16]{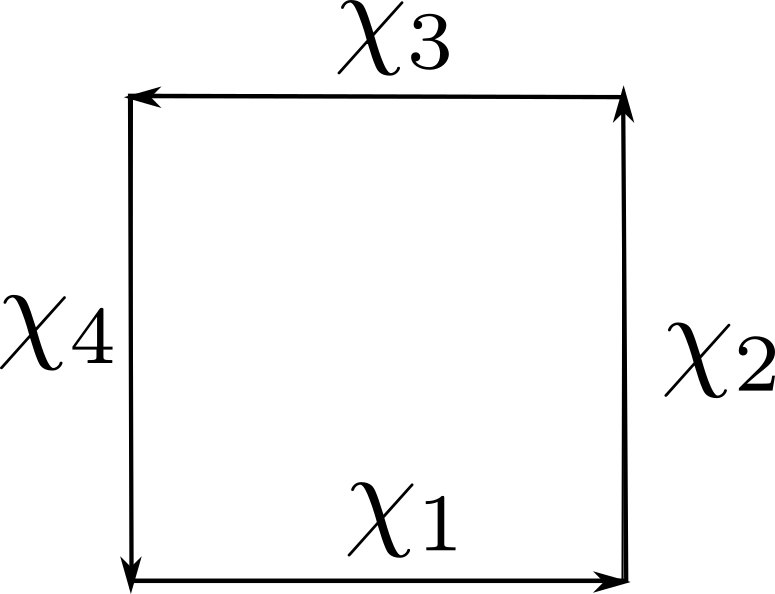}
		\caption{Convex}
	\end{subfigure}
	~
	\begin{subfigure}[b]{0.3\textwidth}
		\centering
		\includegraphics[scale = 0.16]{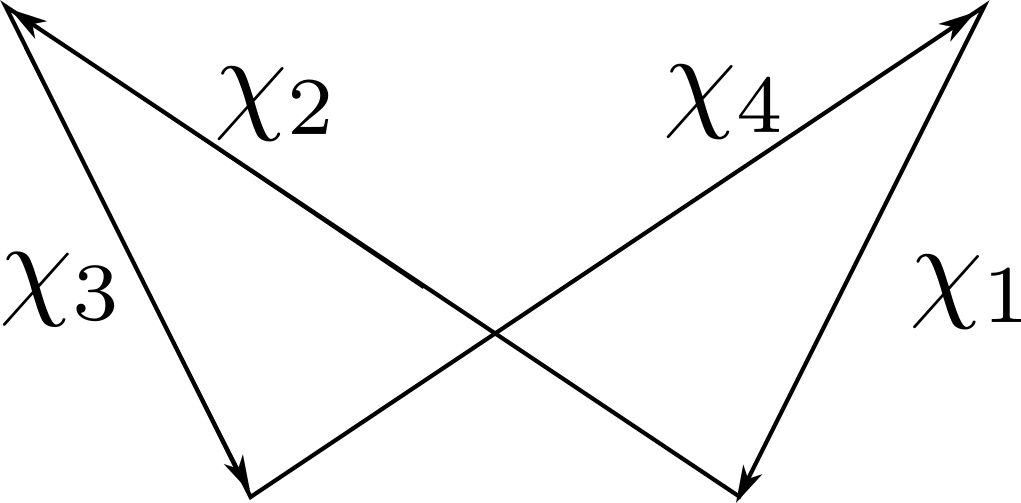}
		\caption{Crossed}
	\end{subfigure}
	~
	\begin{subfigure}[b]{0.3\textwidth}
		\centering
		\includegraphics[scale = 0.16]{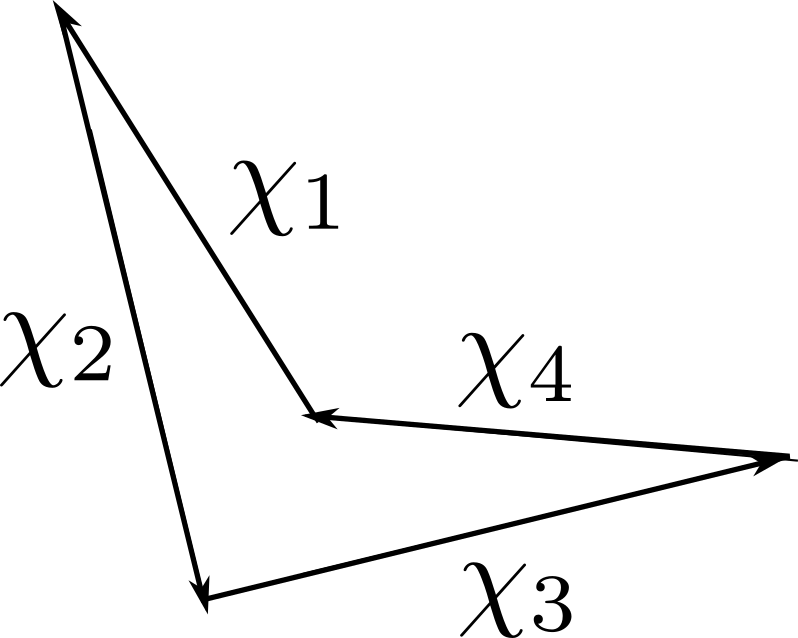}
		\caption{Concave}
	\end{subfigure}
	\caption{The three possible shapes of the projection of $\mathcal{S}$ onto the $yz-$plane (first row) and the three possible quadrilaterals that are obtained by connecting the endpoints of subsequent $\tilde{X}_{1-4}$ according to 	\defnref{DistCas} (second row).}
	\figlab{int0}
\end{figure}

\begin{definition}
	Define the \textnormal{difference vectors} $\chi_i$ and the \textnormal{difference determinants} $\delta_i$ as:
	\begin{eqnarray*}
	\chi_i = 
	\tilde{X}_{i+1}-\tilde{X}_i
	,\quad
	\delta_i = 
	\det\lp \chi_i~\chi_{i+1}\rp, ~~i = 1,2,3,4.
	\end{eqnarray*}
The geometric shape formed by connecting the endpoints of  consequent $\tilde{X}_i$ (i.e. $\tilde{X}_1$ with $\tilde{X}_2$, $\tilde{X}_2$ with $\tilde{X}_3$, $\tilde{X}_3$ with $\tilde{X}_4$, $\tilde{X}_4$ with $\tilde{X}_1$) will be called the \textnormal{projected quadrilateral}. 
	\begin{enumerate}
		\item If the difference determinants $\delta_{1-4}$ are all of the same sign, then the projected quadrilateral will be called a \textnormal{convex quadrilateral} and $\tilde{\mathcal{S}}$ will be called a \textnormal{convex projection}. 
		\item If two of the difference determinants  $\delta_{1-4}$ are positive and the other two are negative, then the projected quadrilateral will be called a \textnormal{crossed quadrilateral} and $\tilde{\mathcal{S}}$ will be called a \textnormal{crossed projection}.
		\item If three of the difference determinants  $\delta_{1-4}$ are of the same sign and the remaining one is of opposite sign, then the projected quadrilateral will be called a \textnormal{concave quadrilateral} and $\tilde{\mathcal{S}}$ will be called a \textnormal{concave projection}.
	\end{enumerate}
	\defnlab{DistCas}
\end{definition}
{The observation that the origin of the $yz$-plane must be contained in such a quadrilateral in order for a sliding vector field to exist was also made in \protect\cite{dieci2017moments}, for the case of ``generally attracting'' intersection of switching manifolds.}
\section{Criteria on the Existence and Multiplicity of the Sliding Vector Field on $\Lambda$}\seclab{existence}
Here we will describe geometrically inspired conditions on the existence and multiplicity of the sliding flow, for the different cases of the quadrilateral projections described in \defnref{DistCas}. An important conclusion of this section is that the existence and multiplicity of the sliding vector field depend only on the shape of the projection, {i.e. only on $\tilde{X}_{1-4}$ and not on the choice of regularization,} and for any fixed projection the same conditions hold for all symmetric transformations (rotation, reflexion, time reversal).
\subsection{The Convex Cases}
For the convex cases, it always holds that $\tilde{\mathcal{S}}_n = \emptyset$ and $\tilde{\mathcal{S}} = \tilde{\mathcal{S}}_h$, hence there could exist either zero or one sliding vector field on $\Lambda$.	 
\begin{proposition}
	Assume that $\tilde{\mathcal{S}}$ is a convex projection, according to \defnref{DistCas}. A unique sliding vector field is defined on the codimension-2 discontinuity $\Lambda$ of the PWS system \eqref{X14} if and only if:
	\begin{eqnarray*}
	\tl{Condition 1: }\det\lp \tilde{X}_1 ~ \tilde{X}_2\rp\det\lp \tilde{X}_3 ~ \tilde{X}_4\rp > 0,\\
	\tl{Condition 2: } \det\lp \tilde{X}_2 ~ \tilde{X}_3\rp\det\lp \tilde{X}_4 ~ \tilde{X}_1\rp > 0.
	\end{eqnarray*}
	\proplab{propdef}
\end{proposition}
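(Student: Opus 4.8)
The plan is to reduce the statement to an elementary fact about convex quadrilaterals in the plane. By \propref{jeffrey2014aProp} (see also \corref{cor1}) a sliding vector field exists at $\textbf{x}=(x,0,0)\in\Lambda$ precisely when the origin of the $yz$-plane lies in $\tilde{\mathcal S}=\tilde F_x((-1,1)^2)$. For a convex projection (\defnref{DistCas}) we have $\tilde{\mathcal S}_n=\emptyset$, so $\tilde{\mathcal S}$ is the filled convex quadrilateral $Q$ spanned by $\tilde X_1,\tilde X_2,\tilde X_3,\tilde X_4$ in cyclic order (see \figref{int0}), and there is at most one sliding vector field. Since a point of $\partial Q$ is the $\tilde F_x$-image only of a point with $\psi_*=\pm1$ or $\phi_*=\pm1$, which is excluded in \defnref{slide2}, the proposition is equivalent to the planar statement: the origin lies in $\operatorname{int}Q$ if and only if Conditions 1 and 2 hold.

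Put $d_i:=\det(\tilde X_i,\tilde X_{i+1})$ for $i=1,2,3,4$ (indices mod $4$), so that Conditions 1 and 2 read exactly $d_1d_3>0$ and $d_2d_4>0$. The sign of $d_i$ records on which side of the directed edge $\tilde X_i\to\tilde X_{i+1}$ the origin lies, and the interior of a convex polygon is precisely the set of points lying strictly on one fixed side of every directed edge; hence $0\in\operatorname{int}Q$ if and only if $d_1,d_2,d_3,d_4$ all have the same strict sign, which gives the ``only if'' direction at once. For ``if'', assume $d_1d_3>0$ and $d_2d_4>0$, so that $(\operatorname{sgn}d_1,\dots,\operatorname{sgn}d_4)$ is one of $(+,+,+,+)$, $(-,-,-,-)$, $(+,-,+,-)$, $(-,+,-,+)$; it remains to exclude the last three.

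By the symmetries invoked in the proposition --- cyclic relabeling, reflection and time reversal each preserve convexity and Conditions 1, 2 --- I may assume the vertices are numbered counterclockwise, so $0\in\operatorname{int}Q$ iff $d_1,\dots,d_4>0$. Let $\ell_i$ be the line through $\tilde X_i,\tilde X_{i+1}$ and $H_i$ the closed half-plane bounded by $\ell_i$ containing $Q$. The pattern $(+,-,+,-)$ would put the origin strictly inside $H_1\cap H_3$ but strictly outside both $H_2$ and $H_4$. However $H_1\cap H_3$ is a convex wedge (a strip if $\ell_1\parallel\ell_3$) containing $Q$, and the chords $\overline{\tilde X_2\tilde X_3}\subset\ell_2$ and $\overline{\tilde X_4\tilde X_1}\subset\ell_4$ join its two bounding rays; by convexity of $Q$ the lines $\ell_2,\ell_4$ meet at a point outside this wedge, so they do not cross inside it and cut it into three consecutive convex pieces with $Q$ in the middle. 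Hence each of $\ell_2,\ell_4$ keeps $Q$ (and one flanking piece) on its $H$-side and only the other flanking piece on its exterior side, so no point of the wedge lies exterior to both $H_2$ and $H_4$: this rules out $(+,-,+,-)$ and, by the same argument, $(-,+,-,+)$. An analogous argument with the opposite wedge $H_1^{c}\cap H_3^{c}$ rules out $(-,-,-,-)$ --- the origin cannot lie exterior to all four edge-lines of a bounded convex quadrilateral. Therefore the pattern is $(+,+,+,+)$, i.e.\ $0\in\operatorname{int}Q$, and the proposition follows.

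\textbf{Main obstacle.} The genuine content is this planar lemma, and within it the exclusion of the alternating sign pattern for a convex quadrilateral, which rests on the fact that the lines carrying one pair of opposite edges meet outside the ``wedge'' cut out by the other pair --- a classical property of convex (complete) quadrilaterals. Some care is also needed for the borderline configurations --- opposite edges parallel (the wedge degenerates to a strip, but the three-piece picture still applies) and three concurrent edge-lines (non-generic, with some $d_i=0$) --- and for the boundary case $0\in\partial Q$ noted above; the remaining steps (the reduction via \propref{jeffrey2014aProp}, the side-of-edge reading of the determinants, and the ``only if'' direction) are routine.
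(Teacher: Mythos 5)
Your proof is correct and follows the same basic strategy as the paper's: read each $\det\lp \tilde X_i~\tilde X_{i+1}\rp$ as recording the side of the corresponding edge-line on which the origin lies, and check which side-patterns are compatible with Conditions 1 and 2. The difference is one of rigor rather than of route. The paper's written argument only observes that moving the origin out of the quadrilateral across \emph{one} edge-line flips \emph{one} determinant and hence breaks a condition; since each condition is a product of the determinants of an \emph{opposite} pair, this by itself only excludes the regions adjacent to $Q$, and the paper delegates the remaining exterior regions to the sign table in \figref{convexfig}. You correctly identify that the actual mathematical content of the ``if'' direction is the exclusion of the sign patterns $(+,-,+,-)$, $(-,+,-,+)$ and $(-,-,-,-)$, and you supply a genuine argument for it: the two opposite edge-lines $\ell_2,\ell_4$ meet the wedge $H_1\cap H_3$ only in the disjoint chords $\overline{\tilde X_2\tilde X_3}$ and $\overline{\tilde X_4\tilde X_1}$, so they split the wedge into three pieces with $Q$ in the middle and no point of the wedge is exterior to both $H_2$ and $H_4$; and no point is exterior to all four supporting half-planes of a bounded convex polygon (this last fact is most cleanly seen from the outward normals positively spanning $\mathbb R^2$, which is slightly different from your ``analogous wedge'' phrasing but the claim is true). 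Your handling of the degenerate sub-cases (parallel opposite edges, some $d_i=0$, origin on $\partial Q$, and the reduction of orientation to the counterclockwise case by reflection) is also sound. In short: same geometric approach, but your write-up actually proves the step the paper leaves to a figure.
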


\begin{figure}[h!]
	\centering
	\includegraphics[scale=0.13]{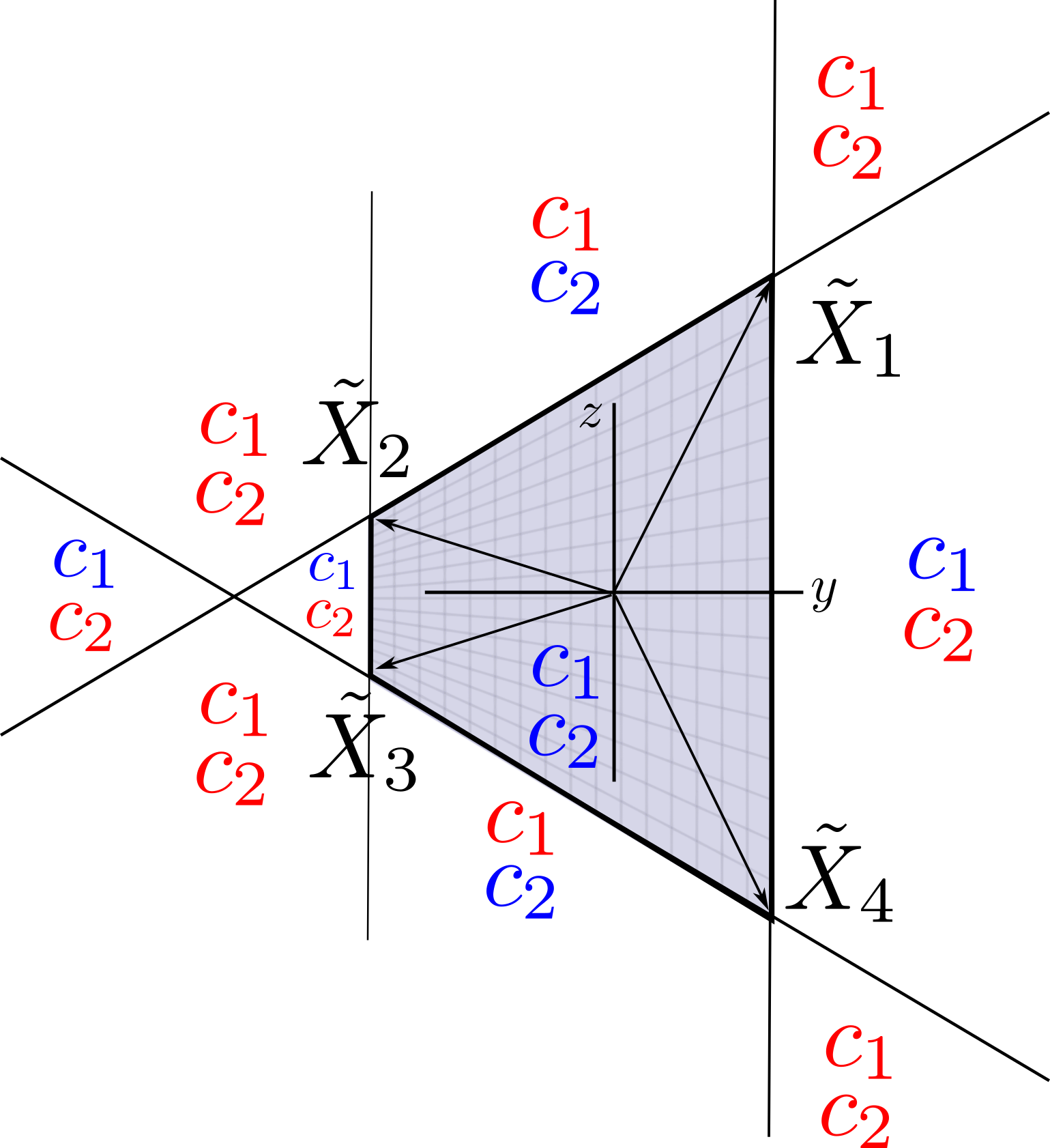}
	\caption{Conditions 1 \& 2 of \propref{propdef} hold only in the case for which the origin is located inside the convex quadrilateral. If the origin were located in any other region outside $\tilde{\mathcal{S}}$, then at least one of the two conditions would be violated, and the potentially violated conditions are indicated by red color.} 
	\figlab{convexfig}
\end{figure}

\begin{proof}
	The proof is based on geometric arguments referring to \figref{convexfig}. The lines connecting subsequent endpoints of $\tilde{X}_i$ separate the $yz-$plane into distinct regions (\figref{convexfig}). If the origin is contained inside the convex quadrilateral obtained by connecting subsequent endpoints $\tilde{X}_i$ with straight segments, then Conditions 1 and 2 are satisfied, as can be easily verified using the right-hand rule. If the origin were ``moved'' to another region (with the quadrilateral shape being fixed), it would have to cross one of the lines connecting subsequent endpoints of $\tilde{X}_i$, thus one of the determinants $\det\lp \tilde{X}_i~\tilde{X}_{i+1}\rp$  would change its sign and one of the two conditions would be violated. In  \figref{convexfig}, ``$c_1$'' is used to denote Condition 1 and ``$c_2$'' is used to denote Condition 2. In every region, the blue font-color is used to indicate that the  respective condition is satisfied and the red font-color is used to indicate that the respective condition is violated, in case the origin is contained in that region.
\end{proof}

\subsection{The Crossed Cases}	
Essentially, the crossed projections reduce to the cases where $\chi_1$ is either an edge or a diagonal (\figref{cross-proof}). All cases are then obtained by symmetry (rotation, reflection, time reversal).

Generically $\tilde{\mathcal{S}}_h\neq \emptyset$, $\tilde{\mathcal{S}}_n\neq \emptyset$,  and a unique sliding vector field exists on $\Lambda$ if the origin of the $yz-$plane is contained in $\tilde{\mathcal{S}}_h$. Two different sets of conditions describe the cases where the vector $\chi_1$ is either an edge or a diagonal of the quadrilateral.
\begin{proposition}
	Assume that $\tilde{\mathcal{S}}$ is a crossed projection, according to \defnref{DistCas}. In the case where $\chi_1$ is an edge, if:
	\begin{eqnarray*}
	\tl{($\chi_1$-edge) } 
	\begin{cases} \tl{Condition 1: }\det\lp \tilde{X}_1 ~ \tilde{X}_2\rp\det\lp \tilde{X}_3 ~ \tilde{X}_4\rp < 0,\\
	\tl{Condition 2: } \det\lp \tilde{X}_2 ~ \tilde{X}_3\rp\det\lp \tilde{X}_4 ~ \tilde{X}_1\rp > 0,
	\end{cases}
	\end{eqnarray*}
	then a unique sliding vector field is defined on the codimension-2 discontinuity $\Lambda$ of the PWS system \eqref{X14}. In the case where $\chi_1$ is a diagonal, if:
	\begin{eqnarray*}
	\tl{($\chi_1$-diagonal)} 
	\begin{cases} \tl{Condition 3: }\det\lp \tilde{X}_1 ~ \tilde{X}_2\rp\det\lp \tilde{X}_3 ~ \tilde{X}_4\rp > 0,\\
	\tl{Condition 4: } \det\lp \tilde{X}_2 ~ \tilde{X}_3\rp\det\lp \tilde{X}_4 ~ \tilde{X}_1\rp < 0,
	\end{cases}
	\end{eqnarray*}
	then a unique sliding vector field is defined on the codimension-2 discontinuity $\Lambda$ of the PWS system \eqref{X14}. 
	\proplab{crossed}
\end{proposition}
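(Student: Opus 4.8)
The plan is to reason geometrically, as in the proof of \propref{propdef}, but now keeping track of the two-to-one region $\tilde{\mathcal S}_n$, which is generically nonempty in the crossed case. First I would exploit the symmetries of the configuration to cut down the number of cases: cyclic relabelling $i\mapsto i+1$, a linear reflection of the $yz$-plane, and time reversal $t\mapsto -t$ (which sends $\tilde X_i\mapsto -\tilde X_i$, hence fixes every product $\det(\tilde X_i\ \tilde X_{i+1})\det(\tilde X_j\ \tilde X_{j+1})$ and preserves the crossed structure of \defnref{DistCas}) all send crossed projections to crossed projections and merely permute the four determinants $\det(\tilde X_i\ \tilde X_{i+1})$ among themselves. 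Modulo these symmetries, a crossed projection is one of the two representative configurations shown in \figref{cross-proof}: either $\chi_1=\tilde X_2-\tilde X_1$ is an edge of the convex hull of $\{\tilde X_1,\ldots,\tilde X_4\}$, or it is a diagonal of that hull. It therefore suffices to establish the stated implication in these two cases.

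Next I would pin down $\tilde{\mathcal S}$, $\tilde{\mathcal S}_h$ and $\tilde{\mathcal S}_n$ explicitly. Since $\tilde F_x$ is bilinear, the boundary of $\tilde{\mathcal S}=\tilde F_x((-1,1)^2)$ consists of arcs of the four segments $[\tilde X_i,\tilde X_{i+1}]$ together with the parabolic line $\mathcal L_p$, the image of the fold locus $\{\det\textbf{D}\tilde F_x=0\}$ (recall \defnref{tildeSnDefn}). In a crossed projection the polygonal contour $\tilde X_1\tilde X_2\tilde X_3\tilde X_4$ self-intersects in a point $q$; a direct look at the preimages under $\tilde F_x$ (equivalently, at the two branches $\sigma_\phi^{(\pm)}$, $\sigma_\psi^{(\pm)}$ of \propref{sigmas}) shows that $\tilde{\mathcal S}_n$ is a curvilinear region bounded by $\mathcal L_p$ and by pieces of the segments adjacent to $q$, while $\tilde{\mathcal S}_h$ is the complementary one-to-one region. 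The key structural fact to extract is that the open planar region $R$ cut out by the lines $\ell_i$ through $\tilde X_i,\tilde X_{i+1}$ corresponding to the sign pattern in the statement lies entirely inside $\tilde{\mathcal S}_h$, and in particular is disjoint from $\tilde{\mathcal S}_n$ and from the exterior of $\tilde{\mathcal S}$.

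Granting this, the remaining step is the same determinant bookkeeping as in \propref{propdef}: by the right-hand rule, $\det(\tilde X_i\ \tilde X_{i+1})$ is positive exactly when the origin lies on one prescribed side of $\ell_i$, so each displayed condition places the origin on a consistent pair of sides of the $\ell_i$'s. In the $\chi_1$-edge configuration the orientations of the four segments around $q$ make this pair of constraints precisely Conditions 1 and 2; in the $\chi_1$-diagonal configuration the relative orientation of the pair $\{\ell_1,\ell_3\}$ against $\{\ell_2,\ell_4\}$ is reversed, so the signs flip and one gets Conditions 3 and 4. In both cases the origin then lies in $R\subseteq\tilde{\mathcal S}_h$, so by \propref{jeffrey2014aProp} and \corref{cor1} there is a unique sliding vector field on $\Lambda$.

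The main obstacle is exactly the structural claim in the second paragraph: correctly situating $\tilde{\mathcal S}_n$ and the parabolic line $\mathcal L_p$ relative to the four lines $\ell_i$, so that one can be sure the sign-conditions region $R$ is contained in the one-to-one part $\tilde{\mathcal S}_h$ rather than overlapping the fold region $\tilde{\mathcal S}_n$ (where there would be \emph{two} sliding vectors) or spilling outside $\tilde{\mathcal S}$ (where there would be none). This is also what makes the statement only an implication, not an equivalence as in the convex case: $R$ is in general a proper subset of $\tilde{\mathcal S}_h$, the rest of $\tilde{\mathcal S}_h$ being bounded by the curve $\mathcal L_p$ instead of by the straight lines $\ell_i$, so the determinant inequalities cannot capture all of it.
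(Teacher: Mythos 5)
Your proposal is correct and follows essentially the same route as the paper, whose proof of \propref{crossed} simply repeats the line-arrangement/right-hand-rule argument of \propref{propdef} with reference to \figref{cross-proof}. You are in fact somewhat more explicit than the paper on the one point it delegates entirely to the figure, namely that the cell cut out by the two sign conditions lies in $\tilde{\mathcal{S}}_h$ and does not meet $\tilde{\mathcal{S}}_n$ or the exterior of $\tilde{\mathcal{S}}$ (which holds because the boundary between $\tilde{\mathcal{S}}_h$ and $\tilde{\mathcal{S}}_n$ consists of pieces of the straight segments, the parabolic line only bounding $\tilde{\mathcal{S}}_n$ against the exterior).
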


\begin{figure}[h!]
	~
	\centering
	\begin{subfigure}[b]{0.4\textwidth}
		\centering
		\includegraphics[scale = 0.18]{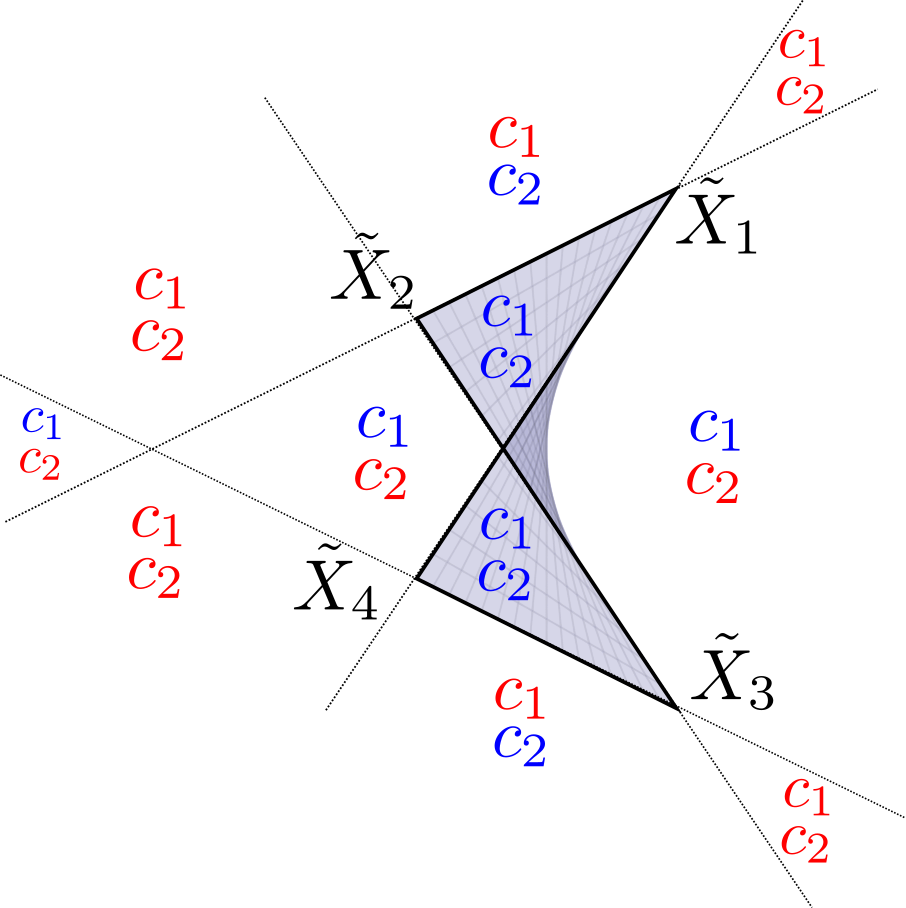}
		\caption{$\chi_1$-edge}
	\end{subfigure}
	~
	\begin{subfigure}[b]{0.5\textwidth}
		\centering
		\includegraphics[scale = 0.18]{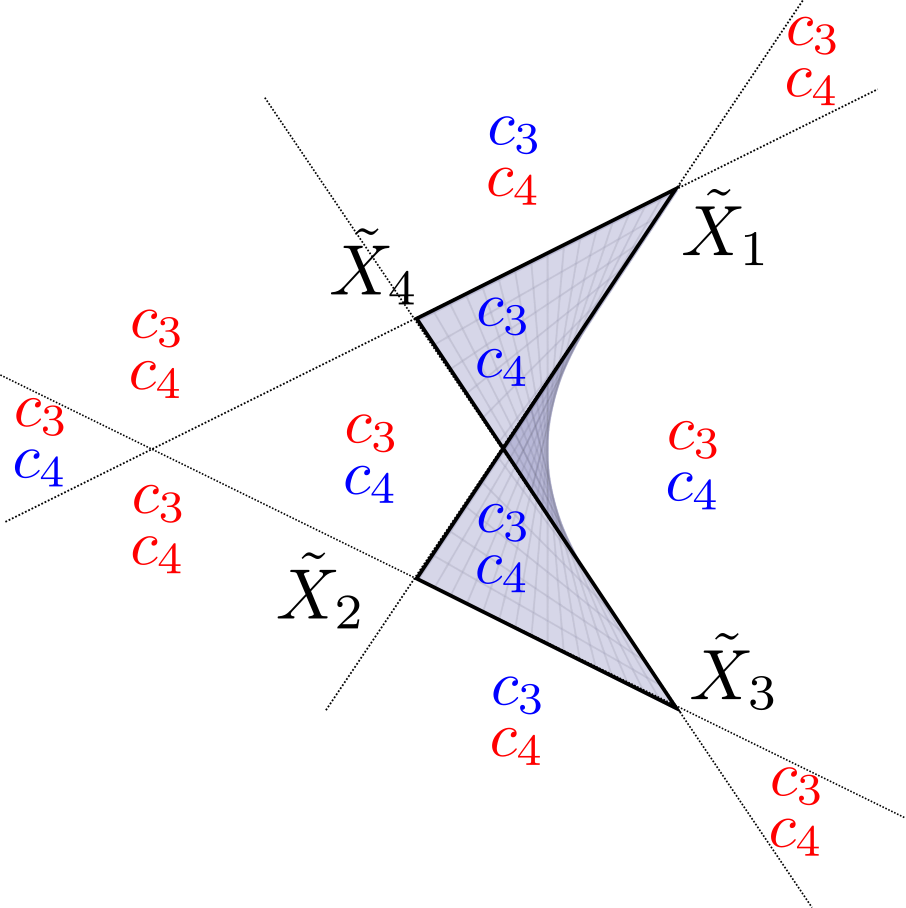}
		\caption{$\chi_1$-diagonal}
	\end{subfigure}
	\caption{The conditions of \propref{crossed}, for the respective cases, hold only when the origin is located inside the homeomorphic region of the crossed projection. When the origin is located in any other region, at least one of these conditions is violated, and the corresponding violated conditions are indicated by red.}
	\figlab{cross-proof}
\end{figure}

\begin{proof}
	The proof is similar to the proof of \propref{propdef} (see \figref{cross-proof}).
\end{proof}

A pair of sliding vector fields exists on $\Lambda$ if and only if the origin of the $yz-$plane is contained in $\tilde{\mathcal{S}}_n$, and this area is bounded by $\mathcal{L}_p$. 
As seen in \figref{cross-proof} (and as follows from simple geometry), $\mathcal{L}_p$ is formed between the endpoints:
\begin{gather*}
\tilde{X}_1 \tl{ and } \tilde{X}_3, \tl{ if } \nl \tilde{X}_1-\tilde{X}_3\nr > \nl \tilde{X}_4-\tilde{X}_2\nr,
\end{gather*}
or:
\begin{gather*}
\tilde{X}_4 \tl{ and } \tilde{X}_2, \tl{ if } \nl \tilde{X}_1-\tilde{X}_3\nr < \nl \tilde{X}_4-\tilde{X}_2\nr,
\end{gather*}
where $\nl \cdot\nr$ denotes the Euclidean norm. If ${\nl \tilde{X}_1-\tilde{X}_3\nr = \nl \tilde{X}_4-\tilde{X}_2\nr}$, then $\tilde{\mathcal{S}}_n=\emptyset,~\mathcal{L}_p=\emptyset$. 

We will present the criteria for the existence of a pair of sliding vector fields for the case where $\chi_1$ is an edge, and the case where $\chi_1$ is a diagonal can be studied similarly.

\begin{proposition}
	Assume that $\tilde{\mathcal{S}}$ is a crossed projection, according to \defnref{DistCas}, with $\chi_1$ being an edge, and define $\kappa$ as:
	\begin{align*}
	\kappa = \begin{cases} 
	1, \qquad  \tl{if} \quad 
	{|| \tilde{X}_1-\tilde{X}_3|| > || \tilde{X}_4-\tilde{X}_2||}, \\
	2, \qquad  \tl{if} \quad 
	{|| \tilde{X}_1-\tilde{X}_3|| < || \tilde{X}_4-\tilde{X}_2||},
	\end{cases}
	\end{align*}
	A pair of sliding vector fields is defined on the codimension-2 discontinuity $\Lambda$ of the PWS system \eqref{X14} if and only if:
	\begin{gather*}
	\tl{Condition 1: } \det\lp \tilde{X}_\kappa~\tilde{X}_{\kappa+2}\rp \det \lp  \tilde{X}_2~\tilde{X}_3 \rp<0,\\
	\tl{Condition 2: }\det\lp \tilde{X}_\kappa~\tilde{X}_{\kappa+2}\rp \det \lp  \tilde{X}_4~\tilde{X}_1 \rp>0,\\
	\tl{Condition 3: }\lp \det\lp \tilde{X}_4~ \tilde{X}_2\rp +\det\lp \tilde{X}_1~ \tilde{X}_3\rp\rp^2-4 \det\lp \tilde{X}_1~ \tilde{X}_2\rp\det\lp \tilde{X}_4~ \tilde{X}_3\rp  >0.
	\end{gather*}
	\proplab{cross-double}
\end{proposition}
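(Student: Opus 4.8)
The plan is to follow the same route as the proofs of \propref{propdef} and \propref{crossed}. By \corref{cor1}, a pair of distinct sliding vector fields exists on $\Lambda$ if and only if the origin of the $yz$-plane lies in the nonhomeomorphic region $\tilde{\mathcal{S}}_n$ (\defnref{tildeSnDefn}), so everything reduces to translating ``origin $\in\tilde{\mathcal{S}}_n$'' into inequalities on the determinants $\det(\tilde X_i~\tilde X_j)$. First I would fix the geometry of $\tilde{\mathcal{S}}_n$ in the crossed case with $\chi_1$ an edge: here the projected quadrilateral is a bowtie whose self-crossing sides are the segments $\tilde X_2\tilde X_3$ and $\tilde X_4\tilde X_1$ (the images of the square edges $\psi=\pm1$ under $\tilde F_x$, see \eqref{FxTil}); denoting their intersection point by $q$, and recalling from the discussion just before the proposition that $\mathcal{L}_p$ is the arc joining $\tilde X_\kappa$ to $\tilde X_{\kappa+2}$, one reads off from \figref{cross-proof}(a) that $\tilde{\mathcal{S}}_n$ is the curvilinear triangle with vertices $q,\tilde X_\kappa,\tilde X_{\kappa+2}$, bounded by the segment $q\tilde X_\kappa$ lying on the line through $\tilde X_4$ and $\tilde X_1$, the segment $q\tilde X_{\kappa+2}$ lying on the line through $\tilde X_2$ and $\tilde X_3$, and the arc $\mathcal{L}_p$. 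Thus the origin lies in $\tilde{\mathcal{S}}_n$ precisely when it lies on the correct side of each of these three curves.

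For the two straight sides I would argue exactly as in \propref{propdef}: the sign of $\det(\tilde X_i~\tilde X_{i+1})$ records which side of the line through $\tilde X_i$ and $\tilde X_{i+1}$ the origin is on, moving the origin across that line flips the sign, and multiplying by the fixed determinant $\det(\tilde X_\kappa~\tilde X_{\kappa+2})$ removes the dependence on the (unspecified) global orientation, producing the products in Conditions~1 and~2; the direction of each inequality is then fixed by the right-hand rule using \figref{cross-proof}(a), with the line through $\tilde X_2,\tilde X_3$ giving Condition~1 and the line through $\tilde X_4,\tilde X_1$ giving Condition~2.

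For the curved side I would identify ``the correct side of $\mathcal{L}_p$'' with Condition~3. As in \propref{sigmas}, eliminating $\sigma_\psi$ from $\tilde F_x(\psi,\phi)=(0,0)$ — with $\tilde F_x$ translated by the position of the origin — leaves the quadratic \eqref{quadratic} in $\sigma_\phi$, and $\mathcal{L}_p$, being the fold of $\tilde F_x$ (the locus where two preimages coalesce), is exactly where \eqref{quadratic} has a double root. A short computation gives that the discriminant of \eqref{quadratic} equals $\Delta=B^2-4A\Gamma$ with $A,B,\Gamma$ as in \eqref{delta}, so $\{\Delta=0\}$ is (the extension of) $\mathcal{L}_p$ and $\{\Delta>0\}$ is one of its two sides. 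To see it is the side carrying $\tilde{\mathcal{S}}_n$, I would evaluate $\Delta$ with the origin placed at $q$: there $\det(\tilde X_2~\tilde X_3)=\det(\tilde X_4~\tilde X_1)=0$, hence $\tilde X_4=-s_1\tilde X_1$ and $\tilde X_3=-s_2\tilde X_2$ with $s_1,s_2>0$, so $A=\det(\tilde X_1~\tilde X_2)$, $\Gamma=s_1s_2A$, $B=-(s_1+s_2)A$ and $\Delta=A^2(s_1-s_2)^2>0$ in the nondegenerate case $\tilde{\mathcal{S}}_n\neq\emptyset$ (which in particular forces $A\neq0$ and $s_1\neq s_2$). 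Since the curvilinear triangle has $q$ as a vertex and $\mathcal{L}_p$ as the opposite side, it lies on the $\{\Delta>0\}$ side, so Condition~3 is precisely the requirement that the origin lie on the $q$-side of $\mathcal{L}_p$; together with the two straight-side conditions this says the origin lies in $\tilde{\mathcal{S}}_n$. Equivalently, Condition~3 yields two distinct real branches $\sigma_\phi^\pm$ in \eqref{sigmas} and Conditions~1--2 force both pairs $(\sigma_\psi^\pm,\sigma_\phi^\pm)$ into $(0,1)^2$, so that both define sliding vector fields in the sense of \defnref{slide2}. (I would treat separately, or discard as a lower-dimensional degenerate stratum, the case $A+\Gamma-B=0$ of \propref{sigmas}, where \eqref{quadratic} is linear and no pair exists.)

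The hard part will be the first paragraph: proving rigorously, from the bilinearity of $\tilde F_x$ (which makes $\mathcal{S}$ a bounded hyperbolic paraboloid), that $\tilde{\mathcal{S}}_n$ really is this curvilinear triangle — that $\mathcal{L}_p$ meets the two crossing sides exactly at $\tilde X_\kappa$ and $\tilde X_{\kappa+2}$ and that the fold does not re-enter the relevant wedge at $q$ elsewhere — and confirming that the $\{\Delta>0\}$ side of the fold is the one containing $\tilde{\mathcal{S}}_n$ rather than its complement within that wedge. Once the shape of $\tilde{\mathcal{S}}_n$ is pinned down, the remaining steps are the orientation bookkeeping already carried out for \propref{propdef} together with the short discriminant computation above; the $\chi_1$-diagonal case then follows by the symmetry (rotation, reflection, time reversal) noted before the proposition.
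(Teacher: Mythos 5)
Your proposal is correct and follows essentially the same route as the paper's proof: reduce via \corref{cor1} to locating the origin in $\tilde{\mathcal{S}}_n$, handle the two straight boundary segments by the sign/region bookkeeping of \propref{propdef} (read off \figref{cros-double-proof}), and identify Condition~3 with the discriminant condition $\Delta>0$ of \propref{sigmas}, which vanishes exactly on $\mathcal{L}_p$. Your extra computation evaluating $\Delta$ at the self-intersection point $q$ to confirm that $\{\Delta>0\}$ is the side of $\mathcal{L}_p$ containing $\tilde{\mathcal{S}}_n$ is a correct and welcome detail that the paper only asserts.
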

	\begin{figure}[H]
	~
	\centering
	\begin{subfigure}[b]{0.4\textwidth}
		\centering
		\includegraphics[scale = 0.19]{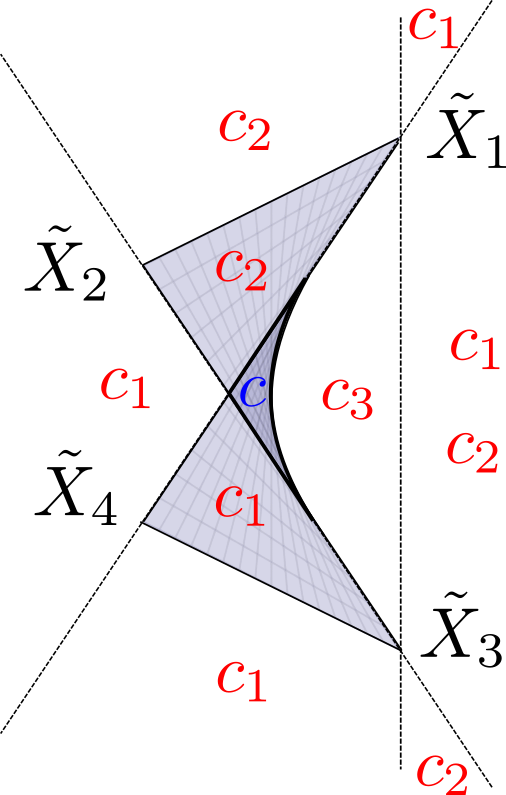}
		\caption{$\mathcal{L}_p$ between $\tilde{X}_1$ and $\tilde{X}_3$}
	\end{subfigure}
	~
	\begin{subfigure}[b]{0.5\textwidth}
		\centering
		\includegraphics[scale = 0.19]{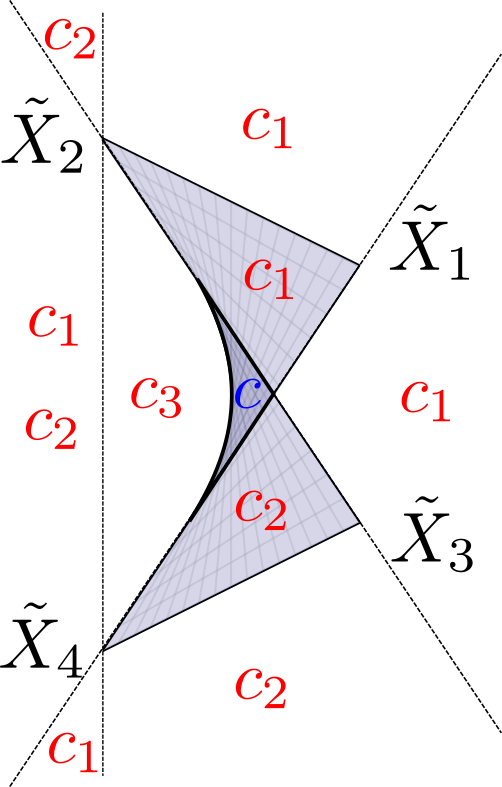}
		\caption{$\mathcal{L}_p$ between $\tilde{X}_2$ and $\tilde{X}_4$ }
	\end{subfigure}
	
	\caption{Conditions 1-3 of \propref{cross-double} hold only in the case for which the origin is located inside the nonhomeomorphic region of the crossed projetion. When the origin is located in any other region, at least one of the conditions is violated, and the corresponding violated conditions are indicated by red colour.}
	\figlab{cros-double-proof}
\end{figure}

\begin{proof}
	The proof is similar to the proof of \propref{propdef} (see \figref{cros-double-proof}). Condition 3 is obtained by requiring $\Delta>0$ in \propref{sigmas} (see \eqref{delta}). This is a sufficient and necessary condition for two real solutions $\lp \sigma_\psi^{(\pm)}, \sigma_\phi^{(\pm)}\rp$ given by \eqref{sigmas} to exist, and it therefore guarantees that the origin of the $yz-$plane lies on the same side as $\tilde{ \mathcal S}_n$ with respect to $\mathcal{L}_p$; if the origin were lying on $\mathcal{L}_p$, then $\Delta=0$ would hold.  
\end{proof}

The case where $\chi_1$ is a diagonal is described by interchanging the indices 2 and 4 {in Conditions 1-2} of \propref{cross-double}.

\subsection{The Concave Cases}
Four possible concave cases are illustrated in Figure \figref{concaves}, where each case is characterized by the vertex corresponding to the endpoint of $\tilde{X}_1$ (the other concave cases are related to the illustrated ones by reflection). We will demonstrate the results for the case shown in \figref{concaves} (a) and results for the other cases can be derived similarly.

\begin{figure}[ht]
	\centering
	\begin{subfigure}[b]{0.23\textwidth}
		\centering
		\includegraphics[scale=0.16]{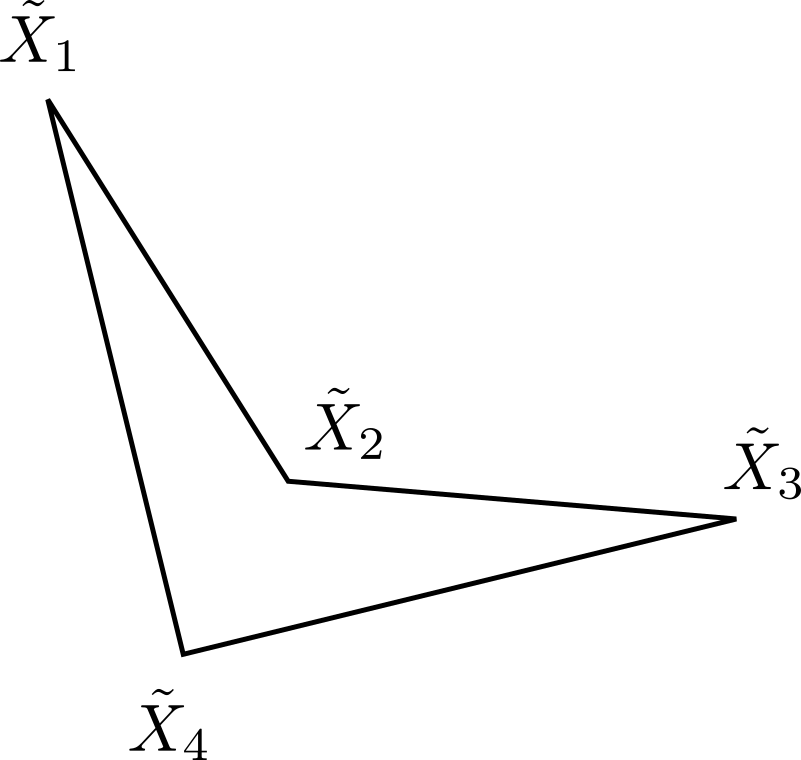}
		\caption{$\tilde{X}_1$-exterior}
	\end{subfigure}
	~
	\begin{subfigure}[b]{0.23\textwidth}
		\centering
		
		\includegraphics[scale=0.16]{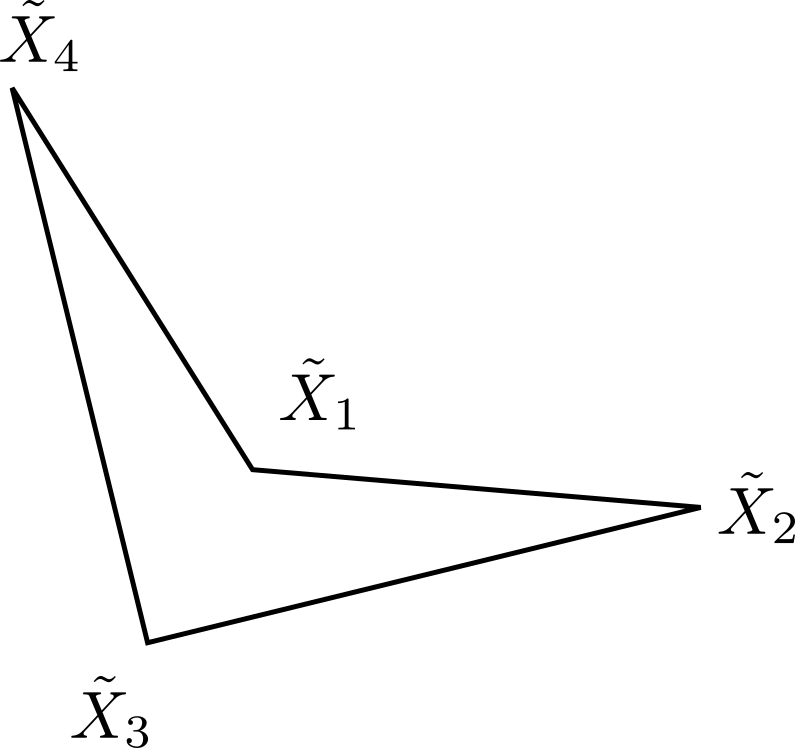}
		\caption{$\tilde{X}_1$-interior}
	\end{subfigure}
	~
	\begin{subfigure}[b]{0.23\textwidth}
		\centering
		
		\includegraphics[scale=0.16]{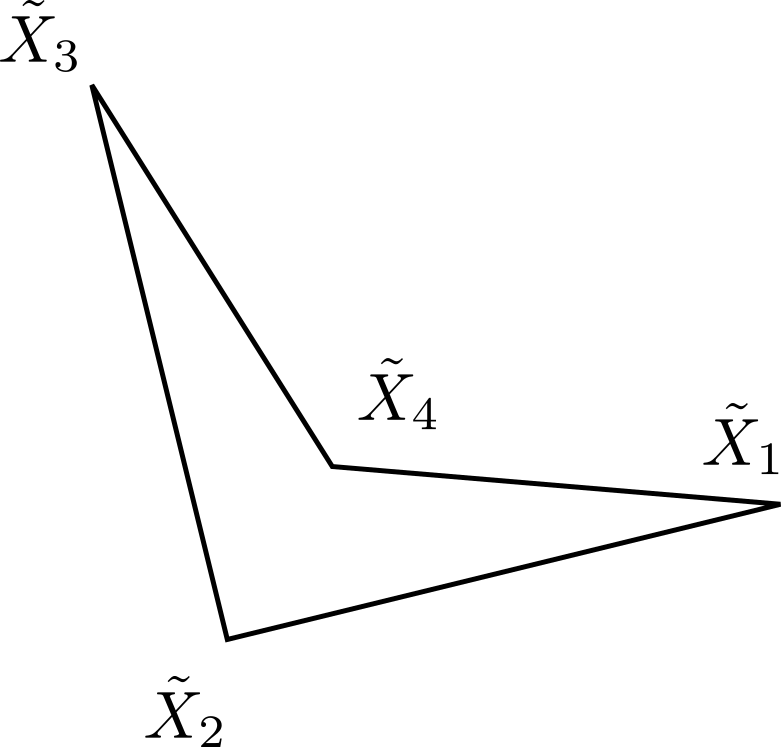}
		\caption{$\tilde{X}_1$-exterior}
	\end{subfigure}
	~
	\begin{subfigure}[b]{0.23\textwidth}
		
		\includegraphics[scale=0.16]{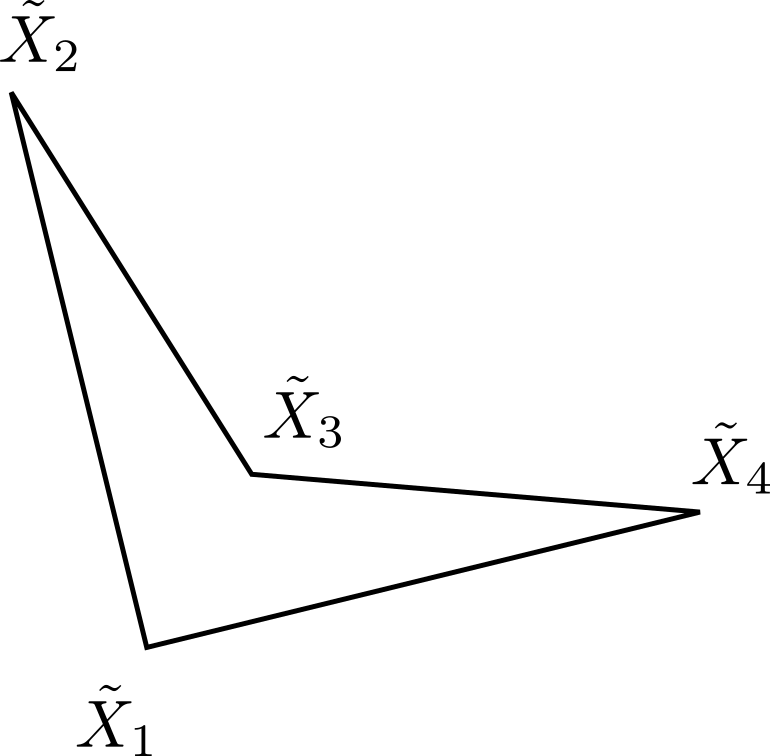}
		\caption{$\tilde{X}_1$-tip}
	\end{subfigure}
	\caption{Four possible ways to ``distribute'' the endpoints of the vector fields $\tilde{X}_{1-4}$ to the corners of a concave quadrilateral. All other concave cases are related to the illustrated ones by reflection.}
	\figlab{concaves}
\end{figure}

A unique sliding vector field exists if the origin is contained in the homeomorphic region of the concave projection. In order to investigate this, we need to divide the homeomorphic region to a convex and a crossed subregion, as shown in 	\figref{conv-double-proof} (a), in each of which a different set of conditions applies. 
\begin{proposition}
	Assume that $\tilde{\mathcal{S}}$ is a concave projection, according to \defnref{DistCas}, where the difference determinant $\delta_1$ is of different sign than $\delta_{2-4}$ (\figref{concaves} (a)). If:  
	\begin{gather*}
	\tl{(crossed subregion) } 
	\begin{cases}
	\tl{Condition 1: } \det\lp \tilde{X}_1~\tilde{X}_2\rp \det \lp  \tilde{X}_2~\tilde{X}_3 \rp< 0,\\
	\tl{Condition 2: }  \det\lp \tilde{X}_3~\tilde{X}_4\rp \det \lp  \tilde{X}_4~\tilde{X}_1 \rp >0,
	\end{cases}
	\end{gather*}
or:
	\begin{gather*}
	\tl{(convex subregion) } 
	\begin{cases}
	\tl{Condition 3: }  \det \lp  \tilde{X}_2~\tilde{X}_3 \rp \det\lp \tilde{X}_3~\tilde{X}_4\rp>0,\\
	\tl{Condition 4: }\det\lp \tilde{X}_4~\tilde{X}_1\rp \det \lp  \tilde{X}_1~\tilde{X}_2 \rp>0,
	\end{cases}
	\end{gather*}
	then a unique sliding vector field is defined on the codimension-2 discontinuity $\Lambda$ of the PWS system \eqref{X14}.
	\proplab{propconc}
\end{proposition}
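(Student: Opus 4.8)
The plan is to argue purely geometrically, in the style of the proofs of \propref{propdef} and \propref{crossed}. By \corref{cor1}, a unique sliding vector field exists on $\Lambda$ exactly when the origin of the $yz$-plane lies in $\tilde{\mathcal{S}}_h\sqcup\mathcal{L}_p$, so the whole task reduces to describing the homeomorphic region of the concave projection of \figref{concaves}~(a) in terms of the signed areas $\det\lp\tilde{X}_i~\tilde{X}_{i+1}\rp$. The dictionary is the one already in use: the straight segment through $\tilde{X}_i$ and $\tilde{X}_{i+1}$ (an edge of $\tilde{\mathcal{S}}$, the image of $\psi=\pm1$ or $\phi=\pm1$ under $\tilde{F}_x$) splits the plane, and the sign of $\det\lp\tilde{X}_i~\tilde{X}_{i+1}\rp$ records which of the two halves contains the origin.

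First I would fix the configuration of \figref{concaves}~(a): by \defnref{DistCas}, $\delta_1=\det\lp\chi_1~\chi_2\rp$ has the sign opposite to that of $\delta_2,\delta_3,\delta_4$, which places the endpoint of $\tilde{X}_1$ at the reflex vertex of a nonconvex projected quadrilateral bounded by the segments $\overline{\tilde{X}_1\tilde{X}_2}$, $\overline{\tilde{X}_2\tilde{X}_3}$, $\overline{\tilde{X}_3\tilde{X}_4}$, $\overline{\tilde{X}_4\tilde{X}_1}$. Following \figref{conv-double-proof}~(a), the interior diagonal at the reflex vertex partitions $\tilde{\mathcal{S}}_h$ into a \emph{convex} subregion, across which the four boundary lines of $\tilde{\mathcal{S}}$ bound $\tilde{\mathcal{S}}_h$ in the same combinatorial pattern as for a convex projection, and a \emph{crossed} subregion adjacent to the reflex vertex, where they bound it in the pattern of a crossed projection. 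One also checks, as in the discussion preceding \defnref{DistCas} and in the elementary analysis of $\tilde{F}_x$ on $\partial((-1,1)^2)$ that locates $\mathcal{L}_p$ in the crossed case, that for a concave projection $\tilde{\mathcal{S}}_n$ and $\mathcal{L}_p$ are either empty or lie outside both subregions, so that $\tilde{\mathcal{S}}_h$ is exactly their union.

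Next I would read off the conditions on each piece by the right-hand-rule bookkeeping already employed. On the convex subregion the origin is pinned down by its position relative to the bounding lines of $\tilde{\mathcal{S}}$, and translating those sidedness statements through the sign rule yields precisely Conditions 3 and 4 (the diagonal contributes no separately recorded condition, exactly as the ``interior'' sidedness is automatic in \propref{propdef}). On the crossed subregion the analogous argument, now of the type used in \propref{crossed}, yields Conditions 1 and 2: the origin must lie on opposite sides of $\overline{\tilde{X}_1\tilde{X}_2}$ and $\overline{\tilde{X}_2\tilde{X}_3}$ and on matching sides of $\overline{\tilde{X}_3\tilde{X}_4}$ and $\overline{\tilde{X}_4\tilde{X}_1}$. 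Since $\tilde{\mathcal{S}}_h$ is the disjoint union of the two pieces, the disjunction ``(Conditions 1 and 2) or (Conditions 3 and 4)'' is equivalent to ``origin $\in\tilde{\mathcal{S}}_h$'', hence by \corref{cor1} to the existence of a unique sliding vector field on $\Lambda$; the remaining concave arrangements (b)--(d) of \figref{concaves} follow from the rotation, reflection and time-reversal symmetries noted at the start of \secref{existence}.

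The main obstacle I expect is not the sidedness accounting, which is mechanical, but making \figref{conv-double-proof}~(a) rigorous: showing that the chosen diagonal really does cut $\tilde{\mathcal{S}}_h$ into exactly these two pieces with no overlap and no missing sliver, and that the fold of the projected (bounded) hyperbolic paraboloid, i.e. the parabolic line $\mathcal{L}_p$, does not re-enter the convex piece. This needs the precise shape of $\tilde{F}_x((-1,1)^2)$ for the concave arrangement of the $\tilde{X}_i$, obtained by the same study of $\tilde{F}_x$ restricted to $\partial((-1,1)^2)$ that identified $\mathcal{L}_p$ in the crossed case; a minor additional point is to verify that, among the four concave cases of \figref{concaves}, only (a) requires a direct argument and the others genuinely reduce to it by symmetry.
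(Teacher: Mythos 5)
Your proposal is correct and follows essentially the same route as the paper: the paper's own proof is a one-line reference to the sidedness argument of \propref{propdef} applied to the decomposition of $\tilde{\mathcal{S}}_h$ into a convex and a crossed subregion as depicted in \figref{conv-double-proof}~(a), which is exactly the reduction you carry out (with \corref{cor1} supplying the link to existence and uniqueness, and the remaining concave arrangements handled by the symmetry/index substitutions of \tabref{switches}). The rigor issues you flag about $\mathcal{L}_p$ and the exactness of the decomposition are not addressed in the paper either, so your write-up is, if anything, more explicit than the original.
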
 
\begin{proof}
	The proof is similar to the proof of  \propref{propdef} (see  \figref{conv-double-proof} (a)).
\end{proof}

\begin{figure}[ht]
	~
	\centering
	\begin{subfigure}[b]{0.4\textwidth}
		\centering
		\includegraphics[scale = 0.21]{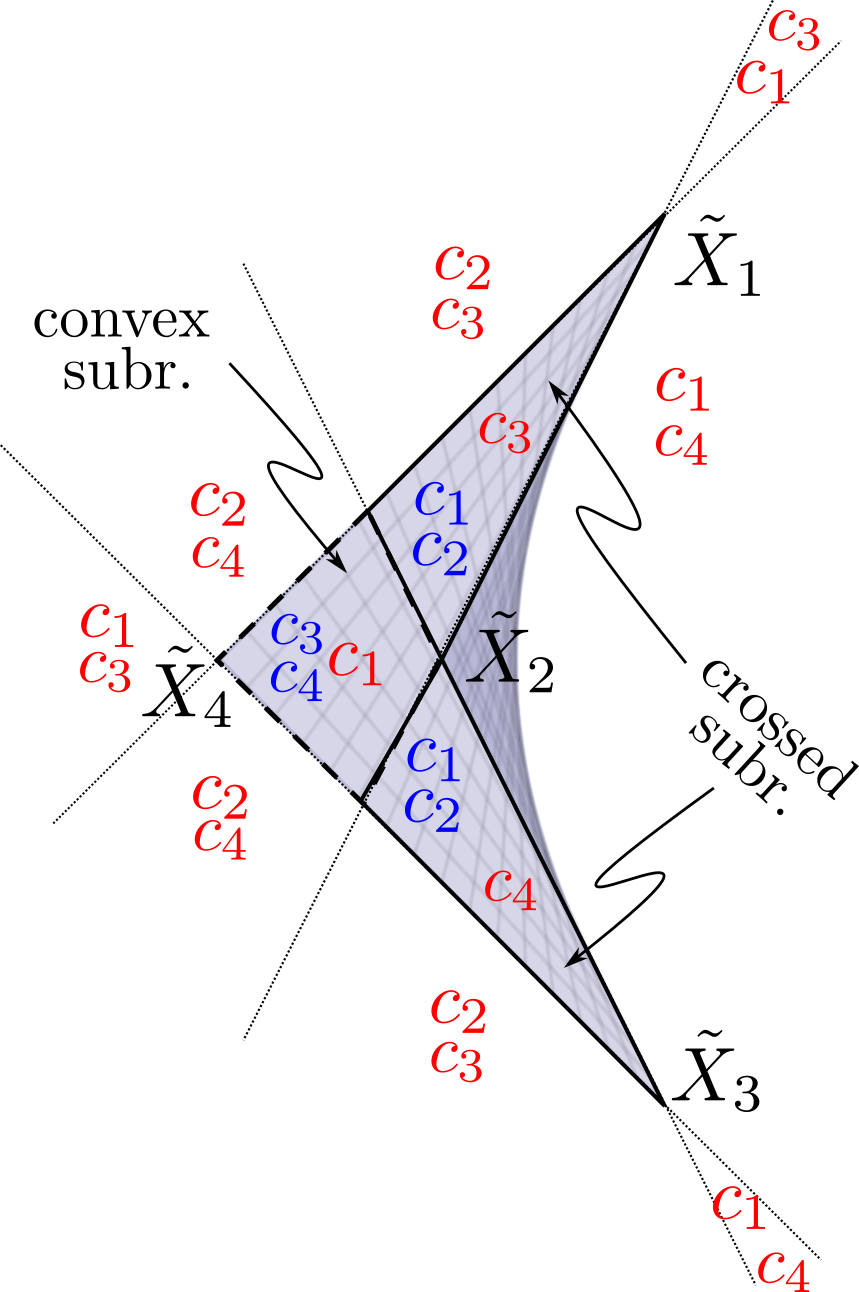}
		\caption{homeomorphic region}
	\end{subfigure}
	~
	\begin{subfigure}[b]{0.5\textwidth}
		\centering
		\includegraphics[scale = 0.21]{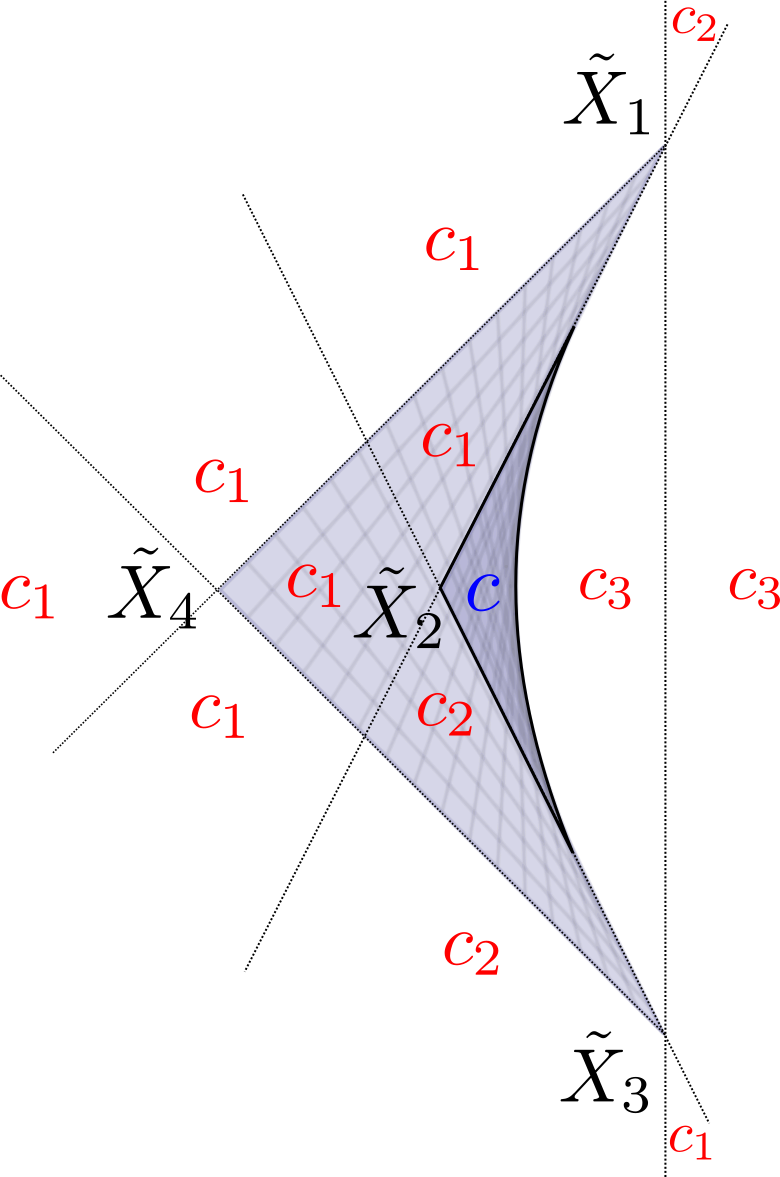}
		\caption{non-homeomorphic region}
	\end{subfigure}
	\caption{The conditions of \propref{propconc} correspond to case illustrated in (a), where the convex and crossed subregions of the concave projections are illustrated. The conditions of \propref{conv-double} correspond to the case illustrated in (b).}
	\figlab{conv-double-proof}
\end{figure}
A pair of sliding vector fields exists if the origin is contained in the nonhomeomorphic region of the concave projection. 
\begin{proposition} 
	Assume that $\tilde{\mathcal{S}}$ is a concave projection, according to \defnref{DistCas}, where the difference determinant $\delta_1$ is of different sign than $\delta_{2-4}$ (\figref{concaves} (a)). A pair of sliding vector fields is defined on the codimension-2 discontinuity $\Lambda$ of the PWS system \eqref{X14} if and only if: 
	\begin{gather*}
	\tl{Condition 1: } \det\lp \tilde{X}_1~\tilde{X}_2\rp \det \lp  \tilde{X}_{1}~\tilde{X}_{3} \rp<0,\\
	\tl{Condition 2: }\det\lp \tilde{X}_2~\tilde{X}_3\rp\det \lp  \tilde{X}_{1}~\tilde{X}_{3} \rp<0,\\
	\tl{Condition 3: }\lp \det\lp \tilde{X}_4~ \tilde{X}_2\rp +\det\lp \tilde{X}_1~ \tilde{X}_3\rp\rp^2-4 \det\lp \tilde{X}_1~ \tilde{X}_2\rp\det\lp \tilde{X}_4~ \tilde{X}_3\rp  >0.
	\end{gather*}
	\proplab{conv-double}
\end{proposition}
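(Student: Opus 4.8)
The plan is to argue exactly as in the proof of \propref{propdef}: reduce the existence of a \emph{pair} of sliding vector fields to a statement about the position of the origin of the $yz$-plane relative to a finite collection of curves, and then read off the sign conditions region by region with the right-hand rule. By \corref{cor1}, a pair of sliding vector fields is defined on $\Lambda$ if and only if the origin lies in the nonhomeomorphic region $\tilde{\mathcal S}_n$ of the projection $\tilde{\mathcal S}$. For the concave projection of \figref{concaves}(a) --- where $\delta_1$ has the sign opposite to $\delta_{2-4}$, so that the reflex (concave) vertex of the projected quadrilateral is $\tilde X_2$ --- the first step is to describe $\tilde{\mathcal S}_n$ explicitly, as in the crossed case (cf. the discussion preceding \propref{cross-double} and \figref{cros-double-proof}). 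Here $\tilde{\mathcal S}_n$ is a curvilinear triangle: two of its sides are the straight segments $\overline{\tilde X_2\tilde X_1}$ and $\overline{\tilde X_2\tilde X_3}$ emanating from the reflex vertex, and its third side is the parabolic arc $\mathcal L_p$ joining $\tilde X_1$ and $\tilde X_3$ (the image under the bilinear map $\tilde F_x$ of its fold locus); this is the region depicted in \figref{conv-double-proof}(b). In particular $\tilde{\mathcal S}_n$ is, up to replacing the chord $\overline{\tilde X_1\tilde X_3}$ by the arc $\mathcal L_p$, the ``notch'' triangle $\tilde X_1\tilde X_2\tilde X_3$ cut out of the quadrilateral.

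Next I would translate ``origin $\in\tilde{\mathcal S}_n$'' into inequalities. Since $\det(\tilde X_i~\tilde X_j)$ vanishes exactly when the origin lies on the line through $\tilde X_i$ and $\tilde X_j$, and its sign records which side of that line the origin is on, Conditions 1 and 2 --- namely $\det(\tilde X_1~\tilde X_2)\det(\tilde X_1~\tilde X_3)<0$ and $\det(\tilde X_2~\tilde X_3)\det(\tilde X_1~\tilde X_3)<0$ --- say precisely that $\det(\tilde X_1~\tilde X_2)$ and $\det(\tilde X_2~\tilde X_3)$ share a common sign opposite to that of $\det(\tilde X_1~\tilde X_3)$; equivalently, that the origin lies in the interior of the straight triangle with vertices $\tilde X_1,\tilde X_2,\tilde X_3$. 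These two inequalities thus place the origin inside the wedge at $\tilde X_2$ bounded by $\overline{\tilde X_2\tilde X_1}$ and $\overline{\tilde X_2\tilde X_3}$ and on the $\tilde X_2$-side of the chord $\overline{\tilde X_1\tilde X_3}$; the precise signs are fixed once and for all by testing one reference position of the origin in $\tilde{\mathcal S}_n$ via the right-hand rule, exactly as in \propref{propdef}. It then remains only to account for the discrepancy between the straight notch triangle and $\tilde{\mathcal S}_n$, whose third side is $\mathcal L_p$ rather than the chord. This is the role of Condition 3, which is nothing but the discriminant inequality $\Delta>0$ of \propref{sigmas} with $\Delta$ as in \eqref{delta}: $\Delta=0$ characterizes the origin lying on $\mathcal L_p$, while $\Delta>0$ characterizes the origin lying on the $\tilde{\mathcal S}_n$-side of $\mathcal L_p$, and combined with Conditions 1--2 it is exactly the requirement that both roots $(\sigma_\psi^{(\pm)},\sigma_\phi^{(\pm)})$ in \eqref{sigmas} lie in $(0,1)^2$, i.e. that both intersection points of $\mathcal S$ with $\Lambda$ are genuine sliding vectors. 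For the ``only if'' direction I would, as in \propref{propdef}, observe that any other region of the $yz$-plane is reached from $\tilde{\mathcal S}_n$ by crossing one of the three lines $\overline{\tilde X_1\tilde X_2}$, $\overline{\tilde X_2\tilde X_3}$, $\overline{\tilde X_1\tilde X_3}$ or the arc $\mathcal L_p$, and each such crossing flips the sign in exactly one of Conditions 1--3.

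The main obstacle I anticipate is the bookkeeping at this last step: verifying that Conditions 1--2 (origin in the notch triangle) together with Condition 3 ($\Delta>0$) carve out precisely $\tilde{\mathcal S}_n$ and no neighbouring region --- in particular, establishing rigorously the qualitative picture of \figref{conv-double-proof}(b), i.e. that when $\tilde X_2$ is reflex the fold arc $\mathcal L_p$ sits on the $\tilde X_2$-side of the chord $\overline{\tilde X_1\tilde X_3}$, so that $\tilde{\mathcal S}_n$ is the part of the straight triangle lying beyond $\mathcal L_p$. This amounts to a short analysis of the fold locus of the bilinear map $\tilde F_x$ and of the position of its image relative to the quadrilateral, analogous to the computation of $\mathcal L_p$'s endpoints in the crossed case. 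Once that geometry is pinned down, the sign-chasing is routine and proceeds as for the earlier propositions. The remaining concave configurations of \figref{concaves} are then obtained by the reflections indicated there, exactly as in \propref{propconc}.
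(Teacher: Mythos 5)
Your proposal follows the paper's own route exactly: \corref{cor1} reduces the claim to ``origin $\in\tilde{\mathcal S}_n$'', Conditions 1--2 are read off region by region as in \propref{propdef} (together they say the origin lies in the triangle $\tilde X_1\tilde X_2\tilde X_3$ cut out of the concave quadrilateral at the reflex vertex $\tilde X_2$), and Condition 3 is the discriminant inequality $\Delta>0$ of \propref{sigmas}, which selects the $\tilde{\mathcal S}_n$-side of $\mathcal L_p$ exactly as in the proof of \propref{cross-double} to which the paper defers. One detail of your description of $\tilde{\mathcal S}_n$ is off, though it does not affect the logic: the Jacobian of the bilinear map $\tilde F_x$ takes the values $\tfrac14\delta_4,\tfrac14\delta_1,\tfrac14\delta_2,\tfrac14\delta_3$ at the corners of $(-1,1)^2$ mapping to $\tilde X_1,\tilde X_2,\tilde X_3,\tilde X_4$, so under the hypothesis on $\delta_1$ its (affine) zero line cuts off the corner $(\psi,\phi)=(-1,1)$ and meets the edges $\lb\phi=1\rb$ and $\lb\psi=-1\rb$ at interior points; consequently $\mathcal L_p$ joins interior points of the segments $\overline{\tilde X_1\tilde X_2}$ and $\overline{\tilde X_2\tilde X_3}$ rather than the vertices $\tilde X_1$ and $\tilde X_3$, and $\tilde{\mathcal S}_n$ is a smaller curvilinear triangle with apex $\tilde X_2$, not the whole notch with its chord bent. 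The sign-chasing is unchanged by this, since Conditions 1--2 still confine the origin to the notch triangle and Condition 3 still cuts along the parabola containing $\mathcal L_p$; the residual verification you flag (that this parabola, within the notch, separates $\tilde{\mathcal S}_n$ from a region with no valid roots) is precisely what the paper also leaves to \figref{conv-double-proof}(b).
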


\begin{proof}
	The proof is similar to the proof of  	\propref{cross-double} (see \figref{conv-double-proof} (b)).
\end{proof}

\tabref{switches} demonstrates how the indices in \propref{propconc} and  \propref{conv-double} should be modified in order to obtain criteria for the existence of sliding vector fields in case $\tilde{\mathcal{S}}$ corresponds to a convex projection illustrated in \figref{concaves} (b), (c) or (d).

\begin{table}[ht]
	\begin{center}
		\caption{Modification of the indices in Conditions 1-{4} of \propref{propconc} and in Conditions 1-{2} of \propref{conv-double} in order to describe the cases (b), (c) and (d) of  \figref{concaves}.}
		\begin{tabular}{|c|c|}
			\hline
			\textbf{\figref{concaves} (b)} & $1\leftarrow4$, $2\leftarrow1$, $3\leftarrow2$, $4\leftarrow3$\\
			\hline
			\textbf{\figref{concaves} (c)} & $1\leftrightarrow3$, $2\leftrightarrow4$\\
			\hline
			\textbf{\figref{concaves} (d)} & $1\leftarrow2$, $2\leftarrow3$, $3\leftarrow4$, $4\leftarrow1$\\
			\hline
			
			\hline
		\end{tabular}
		\vskip 10pt
		\tablab{switches}
	\end{center}
\end{table}

\section{Stability of the sliding flow}
\seclab{stabsec}
According to 	\defnref{slide2}, the stability of the sliding flow is determined by the determinant and the trace of the Jacobian matrix $\textbf{J}$ given by \eqref{Jac}.
We have that:
\begin{align*}
\det\lp \textbf{J}\rp = \det\lp \textbf{D}{\tilde{F}_x}\rp\det\lp \textbf{P}\rp,
\end{align*}
and since $\det\lp\textbf{P}\rp >0$ we have:
\begin{align}
\tl{sgn}\lp\det\lp \textbf{J}\rp\rp = \tl{sgn}\lp \det\lp \textbf{D}{\tilde{F}_x}\rp\rp. \eqlab{signs}
\end{align} 
Therefore, if $\det\lp \textbf{D}{\tilde{F}_x}\rp<0$ then the stability is of saddle type and if $\det\lp \textbf{D}{\tilde{F}_x}\rp>0$ then the stability is of focus/node/center type. In the latter case, whether the sliding flow is attracting, repelling or of center type is determined by the sign of $\tl{tr}\lp\textbf{J}\rp$.

The parametrization $\tilde{F}_x$ maps the open square $\lp -1,1\rp^2$ of the $\psi\phi$-plane to $\tilde{\mathcal{S}}$ in the $yz-$plane, as shown in \figref{map}. For the Jacobian matrix $\textbf{D}{\tilde{F}_x}$ of the parametrization $\tilde{F}_{x}$ we have:
\begin{align}
 \textbf{D}{\tilde{F}_x}\lp \psi_*,\phi_*\rp = \lp \partial_\psi\tilde{F}_x\lp \psi_*,\phi_*\rp\quad \partial_\phi\tilde{F}_x\lp \psi_*,\phi_*\rp\rp,
 \eqlab{diffEquiv}
\end{align}
where the two tangent vectors $\partial_\psi\tilde{F}_x$ and  $\partial_\phi\tilde{F}_x$ are given by:
\begin{align*}
\partial_\psi\tilde{F}_x = \frac{1}{4}\begin{pmatrix}
\lp \beta_1-\beta_2\rp\lp 1+\phi\rp + \lp \beta_4-\beta_3\rp\lp 1-\phi\rp\\
\lp \gamma_1-\gamma_2\rp\lp 1+\phi\rp + \lp \gamma_4-\gamma_3\rp\lp 1-\phi\rp
\end{pmatrix}, \quad
\partial_\phi\tilde{F}_x = \frac{1}{4}\begin{pmatrix}
\lp \beta_1-\beta_4\rp\lp 1+\psi\rp + \lp \beta_2-\beta_3\rp\lp 1-\psi\rp\\
\lp \gamma_1-\gamma_4\rp\lp 1+\psi\rp + \lp \gamma_2-\gamma_3\rp\lp 1-\psi\rp
\end{pmatrix}.
\end{align*}

\begin{figure}[h!]
	\centering
	\includegraphics[scale=0.4]{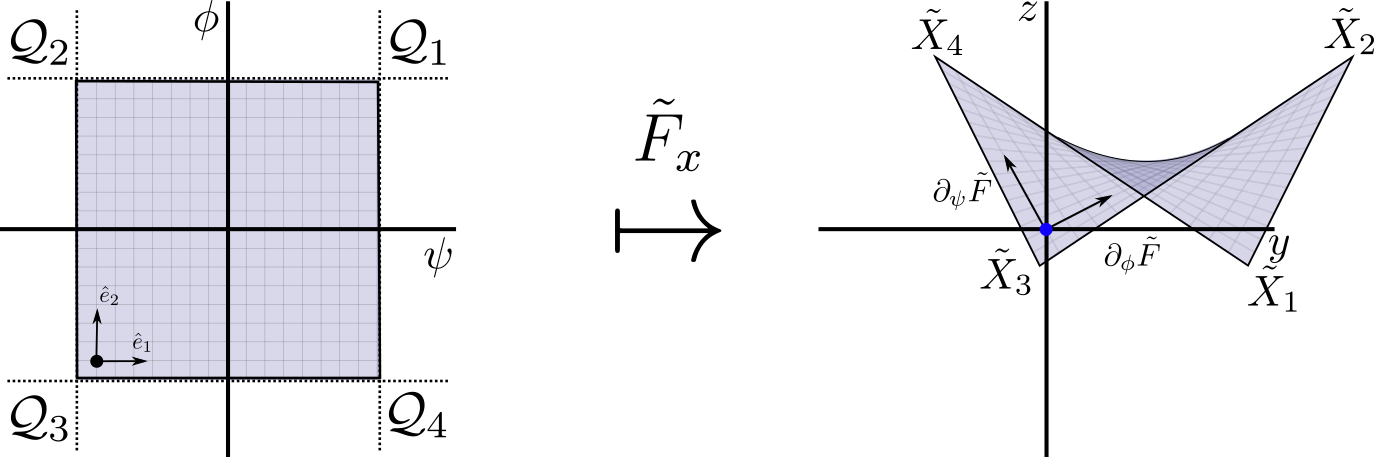}
	\caption{The area $\tilde{\mathcal{S}}$ is the image of the unit square in the $\psi\phi-$plane under $\tilde{F}_x$. For the case of a crossed projection, the map $\tilde{F}_x$ stretches the unit square and folds it back to the $\psi\phi-$plane, and the unit normal vectors are mapped to the tangent vectors of $\tilde{\mathcal{S}}$. The Jacobian matrix of $\tilde{F}_x$ is formed by these two vectors as columns.}
	\figlab{map}
\end{figure}

We are therefore able to relate the Jacobian matrix of the fast subsystem with the matrix formed by the two tangent vectors as columns. If the origin of the $yz-$plane is contained in a subregion of $\tilde{\mathcal{S}}$ where the orientation was preserved under $\tilde{F}_x$ (i.e. $\det\lp \textbf{D}{\tilde{F}_x}\rp>0$), then the stability of the sliding is of node, focus or center type. On the other hand, if the origin of the $yz-$plane is contained in a subregion of $\tilde{\mathcal{S}}$ where the orientation was reversed under $\tilde{F}_x$ (i.e. if $\det\lp \textbf{D}{\tilde{F}_x}\rp<0$) then the stability of the sliding flow is of saddle type. We are able to distinguish between these two cases by only looking at the projected smooth vector fields $\tilde{X}_i$.

\begin{proposition}
	Assume that the origin of the $yz$-plane is contained in a subregion of $\tilde{S}_h$. Then there exists $k\in \{1,2,3,4\}$ such that $\tilde X_k$ and $\tilde X_{k+1}$ are two consequent vectors whose endpoints are corners of this subregion and of $\mathcal{S}$. Furthermore, if:
	\begin{eqnarray*}
	\det\lp \tilde{X}_{k} ~ \tilde{X}_{k+1}\rp > 0,
	\end{eqnarray*}
	then the stability of the sliding vector field is of node, focus or center type.
	On the other hand, if:
	\begin{eqnarray*}
	\det\lp \tilde{X}_{k} ~ \tilde{X}_{k+1}\rp < 0,
	\end{eqnarray*}
	then the stability of the sliding vector field is of saddle type.
	\proplab{orient}
\end{proposition}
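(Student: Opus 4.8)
The plan is to reduce everything, via \eqref{signs} and the discussion following it, to a statement about the sign of the Jacobian determinant of the parametrisation $\tilde F_x$, and then to compute that sign along the relevant edge of the square $(-1,1)^2$. By \eqref{signs}, the stability type at $\textbf x=(x,0,0)\in\Lambda^{sl}$ is governed solely by $\mathrm{sgn}\det\textbf{D}\tilde F_x(\psi_*,\phi_*)$, where $(\psi_*,\phi_*)=\tilde F_x^{-1}(0,0)$ is the preimage of the origin lying in $\mathcal D:=\tilde F_x^{-1}(R)$, with $R\subseteq\tilde{\mathcal S}_h$ the subregion of the homeomorphic part of $\tilde{\mathcal S}$ that contains the origin (recall \defnref{slide2}, \defnref{tildeSnDefn} and \propref{jeffrey2014aProp}). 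So it suffices to establish the identity $\mathrm{sgn}\det\textbf{D}\tilde F_x(\psi_*,\phi_*)=\mathrm{sgn}\det(\tilde X_k~\tilde X_{k+1})$ for the appropriate consecutive pair, since the characterisation stated just before the proposition ($\det\textbf{D}\tilde F_x>0$: node/focus/center type; $\det\textbf{D}\tilde F_x<0$: saddle type) then finishes the proof. The three ingredients I would use are: $\tilde F_x$ maps each edge of the square onto the corresponding straight segment $[\tilde X_i,\tilde X_{i+1}]$ of $\partial\tilde{\mathcal S}$; every value $\tilde F_x(\psi,\phi)$ is the convex combination $\sum_i\nu_i\tilde X_i$ with the weights of \eqref{nus}, so that $\tilde{\mathcal S}\subseteq\mathrm{conv}\{\tilde X_1,\ldots,\tilde X_4\}$; and the fold locus $\{\det\textbf{D}\tilde F_x=0\}$ of the parametrisation is mapped by $\tilde F_x$ onto the parabolic line $\mathcal L_p$ of \defnref{tildeSnDefn} (it is the apparent contour of the bounded hyperbolic paraboloid $\mathcal S$, cf.\ \figref{map}).

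First I would show that $\det\textbf{D}\tilde F_x$ has constant sign on $\mathcal D$: since $R\subseteq\tilde{\mathcal S}_h$ is disjoint from $\mathcal L_p$, the connected set $\mathcal D$ is disjoint from the fold locus, hence $\det\textbf{D}\tilde F_x$ is continuous and nowhere zero on $\mathcal D$ and therefore of a single sign there; by continuity the same sign occurs on the part of the square-edge $E_k$ (one of the two regularisation variables equal to $\pm 1$) that bounds $\mathcal D$ and that $\tilde F_x$ carries onto the arc of $[\tilde X_k,\tilde X_{k+1}]$ bounding $R$. Among the consecutive corners of $R$ that are also corners of $\mathcal S$, I would choose the pair $\tilde X_k,\tilde X_{k+1}$ so that $[\tilde X_k,\tilde X_{k+1}]$ is an edge of the convex polygon $\mathrm{conv}\{\tilde X_1,\ldots,\tilde X_4\}$.

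Then I would carry out the orientation computation on $E_k$; by the cyclic symmetry of the construction it is enough to treat $k=1$, i.e.\ $E_1=\{\phi=1\}$. Along $E_1$ one has $\partial_\psi\tilde F_x=\tfrac12(\tilde X_1-\tilde X_2)$, and the inward normal $-\partial/\partial\phi$ of the square is carried by $\textbf{D}\tilde F_x$ to $w:=-\partial_\phi\tilde F_x$. Because $\det\textbf{D}\tilde F_x\ne 0$ along $E_1$, the parametrisation is a local diffeomorphism there, so $w$ points from the segment $[\tilde X_1,\tilde X_2]$ into $\tilde{\mathcal S}$; and since $[\tilde X_1,\tilde X_2]$ is a supporting edge of $\mathrm{conv}\{\tilde X_1,\ldots,\tilde X_4\}\supseteq R\ni(0,0)$, $w$ points to the side of the line through $\tilde X_1,\tilde X_2$ that contains the origin. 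Consequently $\det(\tilde X_1-\tilde X_2~~0-\tilde X_2)$ and $\det(\tilde X_1-\tilde X_2~~w)$ have the same sign. The first equals $-\det(\tilde X_1~\tilde X_2)$, and the second equals $2\det(\partial_\psi\tilde F_x~~-\partial_\phi\tilde F_x)=-2\det\textbf{D}\tilde F_x(\psi_*,\phi_*)$, using the constant-sign step to move to the base point $(\psi_*,\phi_*)$. Hence $\mathrm{sgn}\det(\tilde X_1~\tilde X_2)=\mathrm{sgn}\det\textbf{D}\tilde F_x(\psi_*,\phi_*)$, which is the identity sought.

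The hard part is not any of these computations, each being only a line or two, but the bookkeeping behind the choice of the pair $\tilde X_k,\tilde X_{k+1}$ in the first step: one must verify that the subregion $R$ of $\tilde{\mathcal S}_h$ containing the origin always has a pair of consecutive corners $\tilde X_k,\tilde X_{k+1}$ lying on an edge of $\mathrm{conv}\{\tilde X_{1-4}\}$, including the degenerate case in which the convex hull is a triangle because one vertex falls inside the triangle of the other three. This has to be done case by case over the classification of the shapes of $\tilde{\mathcal S}$ (convex; crossed with $\chi_1$ an edge or a diagonal; the four concave configurations), exactly as in the proofs of the propositions of \secref{existence} and with the help of the corresponding figures; no new idea enters, only careful inspection.
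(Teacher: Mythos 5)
Your proposal is correct and follows the same basic strategy as the paper's proof --- reduce to the sign of $\det\textbf{D}\tilde F_x(\psi_*,\phi_*)$ via \eqref{signs} and \eqref{diffEquiv}, then read that sign off from a consecutive pair $\tilde X_k,\tilde X_{k+1}$ bounding the subregion --- but you execute the decisive orientation step differently and more rigorously. The paper argues by inspection of the figures that $\partial_\psi\tilde F_x$ and $\partial_\phi\tilde F_x$ ``point towards'' particular edges and lets the reader infer the sign of their determinant; you replace this with three checkable facts: the fold locus of the bilinear map is a line whose image is $\mathcal L_p$, so $\det\textbf{D}\tilde F_x$ has one sign on the connected, fold-free preimage of the subregion and hence on the bounding edge of the square; on that edge $\partial_\psi\tilde F_x=\tfrac12(\tilde X_k-\tilde X_{k+1})$ exactly; and the supporting-line property of $\mathrm{conv}\{\tilde X_{1-4}\}\supseteq\tilde{\mathcal S}\ni(0,0)$ pins down the side to which the inward image $-\partial_\phi\tilde F_x$ points. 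This buys something real: your insistence that $[\tilde X_k,\tilde X_{k+1}]$ be a supporting edge of the convex hull (not merely a pair of corners of the subregion) is exactly what makes the statement well-posed. In the concave case the pair adjacent to the reflex vertex has both endpoints on the boundary of $\tilde{\mathcal S}_h$, yet $\det(\tilde X_k~\tilde X_{k+1})$ for that pair can have the opposite sign to $\det\textbf{D}\tilde F_x(\psi_*,\phi_*)$; the paper's phrasing (``corners of this subregion and of $\mathcal S$'') leaves this selection ambiguous, while your criterion resolves it. The only piece you defer --- that each subregion of $\tilde{\mathcal S}_h$ containing the origin is always bounded by at least one consecutive pair lying on a hull edge --- does go through on inspection of the three shape classes of \defnref{DistCas} (all four edges in the convex case; $[\tilde X_1,\tilde X_2]$ or $[\tilde X_3,\tilde X_4]$ for the two components in the crossed case; the two edges opposite the reflex vertex in the concave case, where $\tilde{\mathcal S}_h$ is connected), so I do not regard this as a gap.
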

\begin{proof}
	For $\tilde{\mathcal{S}}_h\neq\emptyset$, there is at least one edge that is boundary of $\tilde{\mathcal{S}}_h$ and of $\tilde{\mathcal{S}}$ and that is formed by connecting the endpoints of two subsequent projections $\tilde{X}_k$, $\tilde{X}_{k+1}$. Therefore there always exists at least one $k\in\lb 1,2,3,4\rb$ such that the endpoints of $\tilde{X}_k$ and of $\tilde{X}_{k+1}$ are corners of both $\tilde{\mathcal{S}}_h$ and of $\tilde{\mathcal{S}}$.
	
The homeomorphic region is foliated by straight lines, which are given by fixing one of the two parameters $\psi$ or $\phi$ in $\tilde{F}_x\lp \psi, \phi\rp$ and varying the other. The tangent vector $\partial_\psi\tilde{F}$ is directed towards the edge connecting the endpoints of $\tilde{X}_1$ and $\tilde{X}_4$, and the tangent vector $\partial_\phi\tilde{F}$ is directed towards the edge connecting the endpoints of $\tilde{X}_1$ and $\tilde{X}_2$, as illustrated in \figref{proj1}. It follows that in the cases where $\det\lp \tilde{X}_{k} ~ \tilde{X}_{k+1}\rp > 0$  we have that $\det\lp \partial_\psi\tilde{F}_x\lp \psi_*,\phi_*\rp\quad \partial_\phi\tilde{F}_x\lp \psi_*,\phi_*\rp\rp >0$ and therefore from 
\eqref{diffEquiv} and \defnref{slide2} we conclude that the stability of the sliding flow is of node, focus or center type. On the other hand, in the cases where $\det\lp \tilde{X}_{k} ~ \tilde{X}_{k+1}\rp < 0$  we have that $\det\lp \partial_\psi\tilde{F}_x\lp \psi_*,\phi_*\rp\quad \partial_\phi\tilde{F}_x\lp \psi_*,\phi_*\rp\rp <0$ and therefore from 
\eqref{diffEquiv} and \defnref{slide2} we conclude that the stability of the sliding flow is of saddle type. The above hold for all shapes of $\tilde{\mathcal{S}}$ (see e.g. \figref{cr-un}).
\end{proof}

\begin{figure}[]
	\begin{subfigure}[b]{0.23\textwidth}
		\centering
		\includegraphics[scale=0.21]{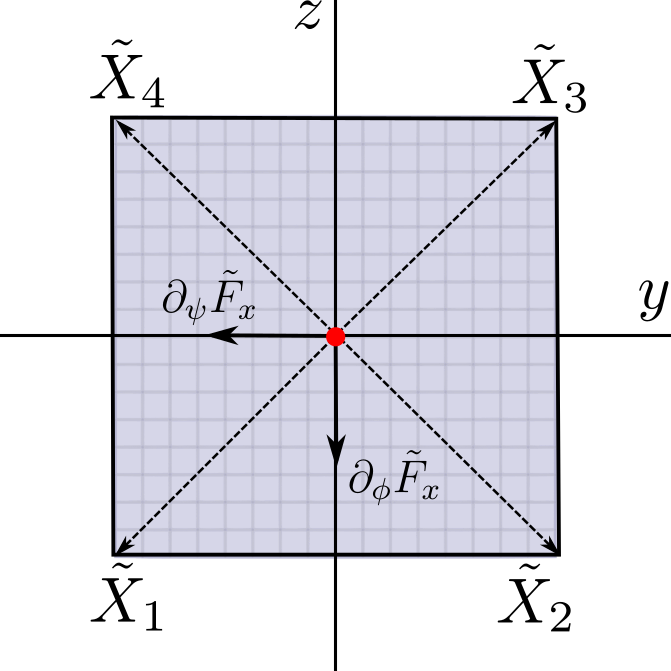}
	\end{subfigure}
	~
	\begin{subfigure}[b]{0.23\textwidth}
		
		\includegraphics[scale=0.21]{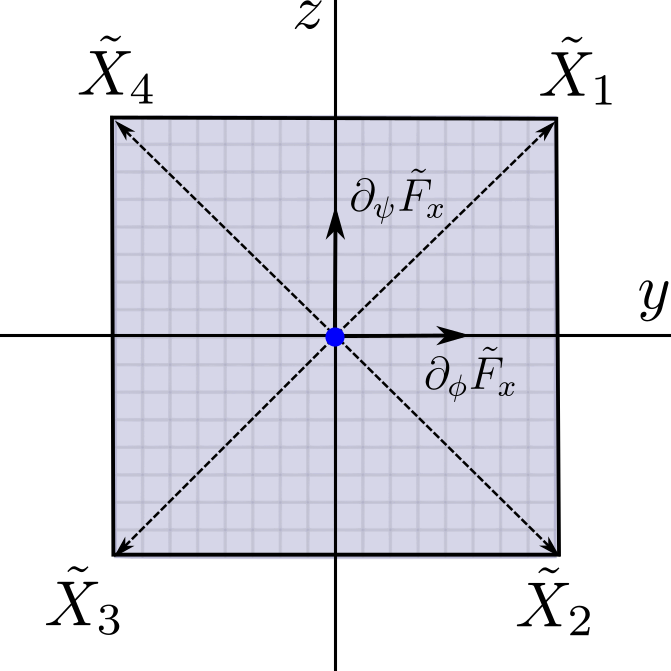}
	\end{subfigure}
	~
	\begin{subfigure}[b]{0.23\textwidth}
		\centering
		
		\includegraphics[scale=0.21]{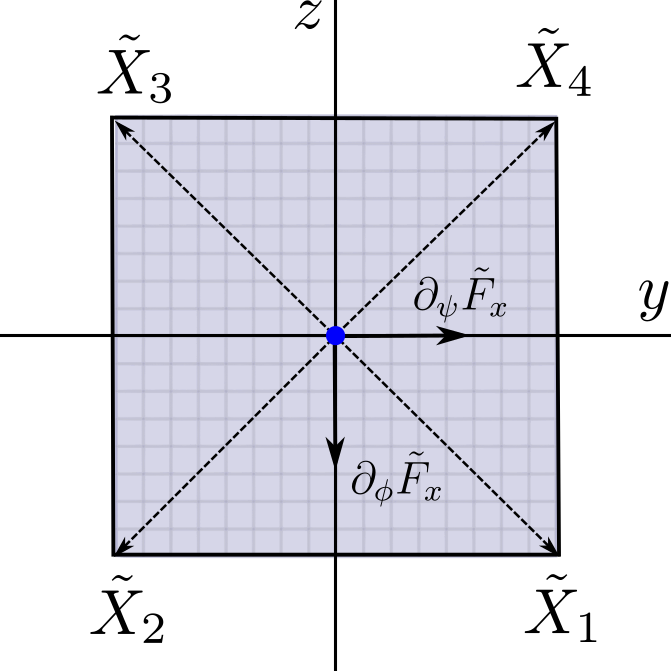}
	\end{subfigure}
	~
	\begin{subfigure}[b]{0.23\textwidth}
		
		\includegraphics[scale=0.21]{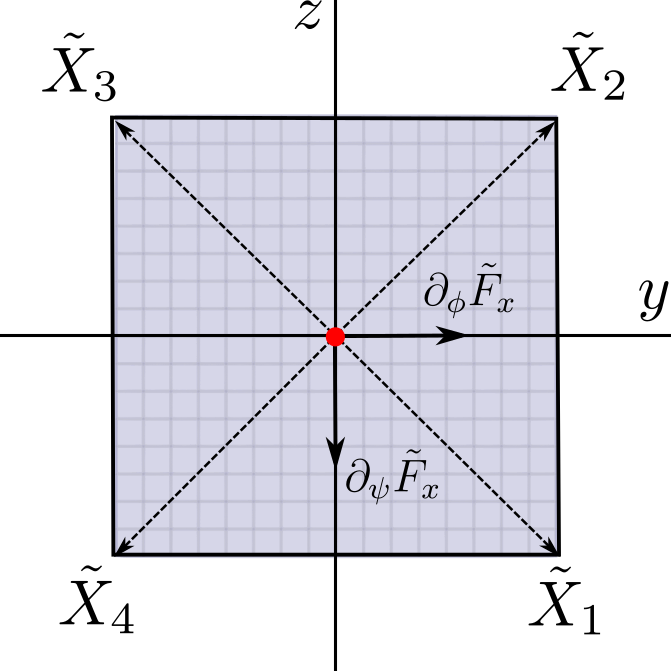}
	\end{subfigure}
	\\
	\vskip1pt
	\begin{subfigure}[b]{0.23\textwidth}
		\centering
		\includegraphics[scale=0.21]{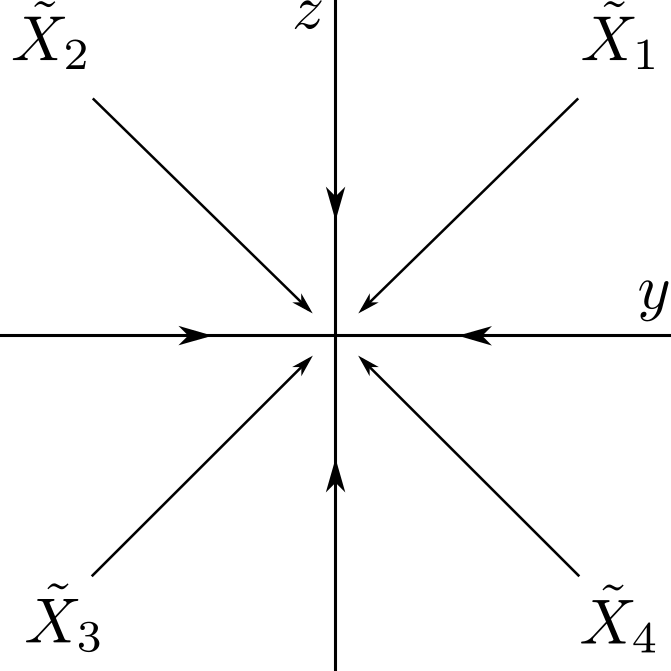}
				\caption{node/focus}
	\end{subfigure}
	~
	\begin{subfigure}[b]{0.23\textwidth}
		
		\includegraphics[scale=0.21]{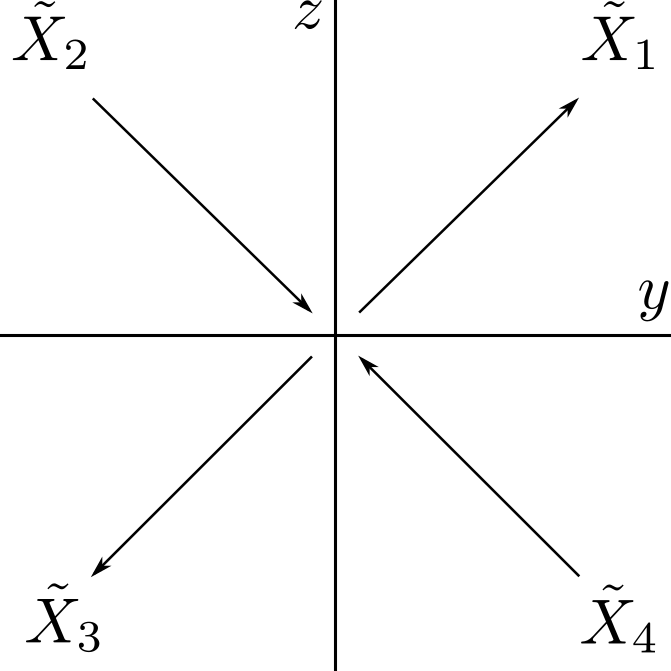}
				\caption{saddle}
	\end{subfigure}
~
	\begin{subfigure}[b]{0.23\textwidth}
		\centering
		
		\includegraphics[scale=0.21]{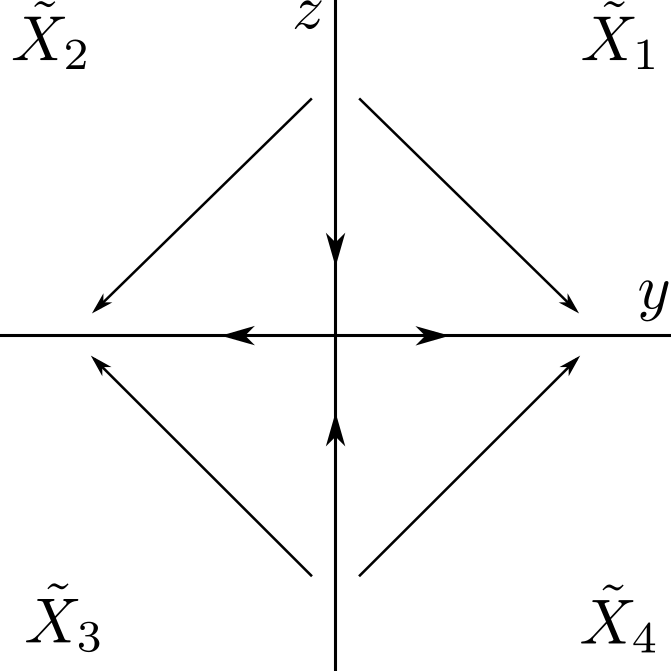}
				\caption{saddle}
	\end{subfigure}
	~
	\begin{subfigure}[b]{0.23\textwidth}
		
		\includegraphics[scale=0.21]{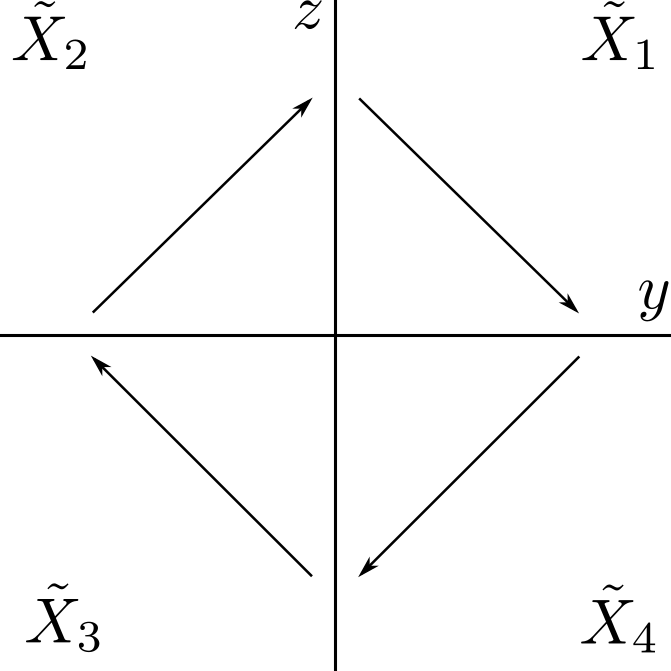}
		\caption{center/focus}
	\end{subfigure}
	\caption{The four convex projection cases. Case (a) corresponds to node/focus-type sliding, cases (b) and (c) correspond to saddle type sliding, case corresponds to center/focus-type sliding. The figure also illustrates the sliding flow on $\Pi_i$, in case it exists.}
	\figlab{proj1}
\end{figure}

An important observation from this result is that whether the sliding flow is of node/focus/center or saddle type does not depend on the choice of the regularization functions. 
In the following, we will use \propref{orient} in order to study the stability in $\tilde{\mathcal{S}}_h$ and $\tilde{\mathcal{S}}_n$ for all three possible projections. As a consequence of this analysis, it will follow that
\begin{theorem}\thmlab{uniqueStable}
 There exists at most one stable sliding vector-field.
\end{theorem}

{On the other hand, whether a focus/node-type sliding vector field is attracting or repelling does depend upon the choice of regularization function. Indeed, the trace of the Jacobian \eqref{Jac} is given by:
\begin{align*}
\begin{aligned}
\tl{tr}\lp \textbf{J}\rp &= \frac{\psi'_*}{4}\lp \lp \beta_1-\beta_2\rp\lp1+\phi_*\rp+\lp \beta_4-\beta_3\rp \lp 1-\phi_*\rp\rp \\
&\quad+\frac{\phi'_*}{4}\lp \lp \gamma_1-\gamma_4\rp\lp1+\psi_*\rp+\lp \gamma_2-\gamma_3\rp \lp 1-\psi_*\rp\rp,
\end{aligned}
\end{align*}
or:
\begin{align}
\begin{aligned} 
\tl{tr}\lp \textbf{J}\rp& = \frac{\psi'_*}{2}\lp \lp \beta_1-\beta_2\rp\sigma_\phi+\lp \beta_4-\beta_3\rp \lp 1-\sigma_\phi\rp\rp \\
&\quad+\frac{\phi'_*}{2}\lp \lp \gamma_1-\gamma_4\rp\sigma_\psi+\lp \gamma_2-\gamma_3\rp \lp 1-\sigma_\psi\rp\rp,
\end{aligned}
\eqlab{trace}
\end{align}
in terms of $(\sigma_\psi,\sigma_\phi)$. Now, although the values $(\psi_*,\phi_*)$, and therefore also $(\sigma_\psi,\sigma_\phi)$, are independent of the regularization functions, the quantities $\psi'_*$ and $\phi'_*$ do in fact depend upon $\psi$ and $\psi$. The sign of $\tl{tr}$ could therefore vary (at least when the differences $\lp\beta_i-\beta_j\rp$, $\lp\gamma_i-\gamma_j\rp$ are not all of the same sign) for different choices of regularization functions. See also \protect\cite{krihog2} where similar issues occur in the regularization of the two-fold. However, there are special cases where the nodal/focal-stability is independent of the choice of regularization functions. 
	\begin{proposition}\proplab{stabilityExample}
		Assume that the PWS system \eqref{X14} admits a sliding vector field on $\Lambda$ in the sense of \defnref{slide2Fil} which is of node/focus type (i.e. $\text{det}~F_x(\psi_*,\phi_*)>0$) according to \propref{orient}, and assume further that $s_1$, $s_2$ given by:
\begin{align*}
s_1 &= \lp \beta_1-\beta_2\rp\sigma_\phi+\lp \beta_4-\beta_3\rp \lp 1-\sigma_\phi\rp,\\
s_2 &=\lp \gamma_1-\gamma_4\rp\sigma_\psi+\lp \gamma_2-\gamma_3\rp \lp 1-\sigma_\psi\rp,
\end{align*}
are of the same sign. If $s_{1-2}$ are positive, then the sliding vector field is repelling, while if $s_{1-2}$ are negative, then the sliding vector field is attracting.
	\end{proposition}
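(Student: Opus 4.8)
The plan is to read the conclusion directly off the trace formula \eqref{trace} together with the sign information already extracted in \eqref{signs}. First I would recall that, by hypothesis, the sliding vector field is of node/focus type, which by \propref{orient} and \eqref{signs} means $\det(\textbf{D}\tilde{F}_x(\psi_*,\phi_*)) > 0$, and hence $\det(\textbf{J}) > 0$ at the corresponding equilibrium $(x,\psi^{-1}(\psi_*),\phi^{-1}(\phi_*))$ of the layer problem \eqref{layprob}. Since $\textbf{J}$ is a $2\times 2$ matrix with positive determinant, its eigenvalues $\lambda_1,\lambda_2$ are either a complex-conjugate pair or a pair of real numbers of the same sign; in either case $\operatorname{sgn}(\operatorname{Re}\lambda_1) = \operatorname{sgn}(\operatorname{Re}\lambda_2) = \operatorname{sgn}(\operatorname{tr}\textbf{J})$, so that by \defnref{slide2} the sliding flow is attracting precisely when $\operatorname{tr}\textbf{J} < 0$ and repelling precisely when $\operatorname{tr}\textbf{J} > 0$.

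Next I would invoke the explicit trace formula \eqref{trace}, which in terms of $s_1$ and $s_2$ reads $\operatorname{tr}\textbf{J} = \tfrac{1}{2}\psi'_* s_1 + \tfrac{1}{2}\phi'_* s_2$, where $\psi'_* = \psi'(\hat y_*)$ and $\phi'_* = \phi'(\hat z_*)$. Because $\psi$ and $\phi$ are regularization functions in the sense of \defnref{phiFuncs}, they are strictly increasing, so $\psi'_* > 0$ and $\phi'_* > 0$ regardless of which admissible regularization functions were chosen. Consequently, if $s_1$ and $s_2$ are both positive then $\operatorname{tr}\textbf{J}$ is a positive combination of positive terms, hence $\operatorname{tr}\textbf{J} > 0$ and the sliding vector field is repelling; symmetrically, if $s_1$ and $s_2$ are both negative then $\operatorname{tr}\textbf{J} < 0$ and the sliding vector field is attracting.

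The proof involves no real obstacle; the only point that warrants a sentence is why the sign of the trace alone suffices to decide stability here, namely that the node/focus hypothesis excludes the saddle case $\lambda_1\lambda_2 < 0$ in which $\operatorname{tr}\textbf{J}$ would carry no information about attraction. I would also remark that this is exactly the mechanism by which the hypothesis ``$s_1,s_2$ of the same sign'' removes the regularization dependence discussed just above: although $\psi'_*$ and $\phi'_*$ individually vary with the choice of $\psi,\phi$, they only ever enter as strictly positive weights, so a common sign of $s_1$ and $s_2$ forces the sign of $\operatorname{tr}\textbf{J}$, and hence the nodal/focal stability type, independently of those weights.
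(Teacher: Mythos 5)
Your proof is correct and follows essentially the same route as the paper: both split $\operatorname{tr}\mathbf{J}$ from \eqref{trace} into the terms $\tfrac{\psi'_*}{2}s_1$ and $\tfrac{\phi'_*}{2}s_2$ and use the positivity of $\psi'_*,\phi'_*$ from \defnref{phiFuncs} to conclude that a common sign of $s_1,s_2$ fixes the sign of the trace. Your additional remark that $\det\mathbf{J}>0$ is what makes the trace sign decisive for attraction versus repulsion is a useful explicit justification of a step the paper leaves implicit.
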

\begin{proof}
	We brake \eqref{trace} into the two terms:
	\begin{align*}
\frac{\psi'_*}{2}\lp \lp \beta_1-\beta_2\rp\sigma_\phi+\lp \beta_4-\beta_3\rp \lp 1-\sigma_\phi\rp\rp=\frac{\psi'_*}{2}s_1,
	\end{align*}
	and:
	\begin{align*}
\frac{\phi'_*}{2}\lp \lp \gamma_1-\gamma_4\rp\sigma_\psi+\lp \gamma_2-\gamma_3\rp \lp 1-\sigma_\psi\rp\rp=\frac{\phi'_*}{2}s_2.	
	\end{align*}
	By \defnref{phiFuncs} and the fact that $\phi'_*, \psi'_*>0$ the signs of these two terms coincide with the sign of $s_{1}$ and $s_2$, respectively. The result therefore follows.
\end{proof}
} 

{A more restricted class of PWS systems of the form \eqref{X14} for which the stability of the nodal/focal sliding flow on $\Lambda$ does not depend on the regularization is described in the following corollary.
\begin{corollary}\corlab{stabilityExample2}
Assume that the PWS system \eqref{X14} admits a sliding vector field in the sense of \defnref{slide2Fil} which is of node/focus type according to \propref{orient}, and assume further that the differences:
\begin{align*}
\lp \beta_1-\beta_2\rp, \quad \lp \beta_4-\beta_3 \rp,\quad  \lp \gamma_1-\gamma_4\rp,\quad  \lp \gamma_2-\gamma_3\rp,
\end{align*}
are of the same sign. If the above differences are positive, then the sliding vector field is repelling, while if the above differences are negative, then the sliding vector field is attracting. 
\end{corollary}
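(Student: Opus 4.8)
The plan is to obtain this as an immediate specialization of \propref{stabilityExample}: I will show that the stronger sign hypothesis on the four differences forces the quantities $s_1$ and $s_2$ appearing in \propref{stabilityExample} to have the same sign, after which the conclusion is exactly that proposition's. First I would recall that, by \defnref{slide2} and \propref{sigmas}, a sliding vector field on $\Lambda$ corresponds to a pair $(\sigma_\psi,\sigma_\phi)\in(0,1)^2$, so that the expressions
\begin{align*}
s_1 &= \lp \beta_1-\beta_2\rp\sigma_\phi+\lp \beta_4-\beta_3\rp \lp 1-\sigma_\phi\rp,\\
s_2 &= \lp \gamma_1-\gamma_4\rp\sigma_\psi+\lp \gamma_2-\gamma_3\rp \lp 1-\sigma_\psi\rp,
\end{align*}
are genuine convex combinations: $s_1$ of the numbers $\beta_1-\beta_2$ and $\beta_4-\beta_3$, and $s_2$ of $\gamma_1-\gamma_4$ and $\gamma_2-\gamma_3$.

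The key step is then the elementary fact that a convex combination of two real numbers having a common sign inherits that sign. Hence, if $\beta_1-\beta_2$, $\beta_4-\beta_3$, $\gamma_1-\gamma_4$, $\gamma_2-\gamma_3$ are all positive, then $s_1>0$ and $s_2>0$, so $s_1$ and $s_2$ share the (positive) sign required by \propref{stabilityExample}, and that proposition yields that the node/focus-type sliding vector field is repelling. Symmetrically, if the four differences are all negative, then $s_1<0$ and $s_2<0$, and \propref{stabilityExample} gives that the sliding vector field is attracting. The node/focus hypothesis (equivalently $\det\lp \textbf{D}{\tilde{F}_x}\rp>0$ at $(\psi_*,\phi_*)$, cf.\ \propref{orient}) is carried over unchanged: it is only used to guarantee that the stability type is of focus/node/center, so that the sign of $\tl{tr}(\textbf J)$ in \eqref{trace} decides between attracting and repelling.

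I expect no real obstacle here beyond bookkeeping; the content is entirely in \propref{stabilityExample} together with the observation that the weights lie strictly inside $(0,1)$. The one point that needs a word of care is that $\sigma_\psi$ and $\sigma_\phi$ must be \emph{strictly} between $0$ and $1$ for the sign-of-convex-combination argument to be valid — but this is precisely the defining condition of the sliding region $\Lambda^{sl}$ in \defnref{slide2}, so it holds whenever a sliding vector field exists in the sense of \defnref{slide2Fil} (equivalently \defnref{slide2}, by \thmref{slideEquiv}). No relabeling of the $\tilde X_i$ is needed, since under assumption (A) the differences enter $s_1$ and $s_2$ in the fixed cyclic pattern displayed above.
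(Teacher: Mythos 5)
Your proposal is correct and follows exactly the paper's route: the paper likewise observes that under the common-sign hypothesis the convex combinations $s_1$ and $s_2$ of \propref{stabilityExample} inherit that sign, and then invokes that proposition. Your added care about the weights $\sigma_\psi,\sigma_\phi$ lying strictly in $(0,1)$ is a harmless elaboration of what the paper leaves implicit.
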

\begin{proof}
	In this case $s_{1-2}$ in \propref{stabilityExample} have the same sign. The result is therefore follows from the conclusions in \propref{stabilityExample}.
\end{proof}}

{The references \protect\cite{guglielmi2015a} and \protect\cite{guglielmi2017a} provide examples of systems where the stability of the sliding flow changes with different regularization functions.}
\subsection{Convex projections}
This is the simplest of the three possible projections. Four representative examples of $\tilde{\mathcal{S}}$ are illustrated in \figref{proj1}
\begin{corollary}
	Assume that $\tilde{\mathcal{S}}$ is a convex projection, according to \defnref{DistCas}. If:
	\begin{eqnarray*}
	\det\lp \tilde{X}_{i} ~ \tilde{X}_{i+1}\rp < 0, \quad i = 1,2,3,4,
	\end{eqnarray*}
	then the stability of the sliding vector field is of saddle type.
 On the other hand, if:
	\begin{eqnarray*}
	\det\lp \tilde{X}_{i} ~ \tilde{X}_{i+1}\rp > 0,  \quad i = 1,2,3,4,
	\end{eqnarray*}
	then the stability of the sliding vector field is of node, focus or center type.
\end{corollary}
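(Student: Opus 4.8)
The plan is to read the statement as a direct specialization of \propref{orient} to the convex case. First I would recall the structural fact, noted just above \propref{propdef}, that for a convex projection one has $\tilde{\mathcal{S}}_n=\emptyset$ and hence $\tilde{\mathcal{S}}=\tilde{\mathcal{S}}_h$. Consequently, whenever a sliding vector field exists on $\Lambda$---equivalently, whenever the origin of the $yz$-plane lies in $\tilde{\mathcal{S}}$---the origin necessarily lies in the homeomorphic region, which is exactly the hypothesis needed to invoke \propref{orient}.

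Next I would apply \propref{orient}: it produces an index $k\in\{1,2,3,4\}$ such that the endpoints of $\tilde{X}_k$ and $\tilde{X}_{k+1}$ are corners of the relevant subregion of $\tilde{\mathcal{S}}_h$ and of $\mathcal{S}$, and such that the stability type is governed by $\tl{sgn}\det\lp\tilde{X}_k~\tilde{X}_{k+1}\rp$---positive yielding node/focus/center type, negative yielding saddle type. The concluding observation is that the corollary's hypothesis forces all four determinants $\det\lp\tilde{X}_i~\tilde{X}_{i+1}\rp$, $i=1,\dots,4$, to share one common sign. Therefore, regardless of which $k$ is produced by \propref{orient} (it need not be unique), the sign of $\det\lp\tilde{X}_k~\tilde{X}_{k+1}\rp$ equals this common sign: all four negative gives saddle type, all four positive gives node/focus/center type.

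The only delicate point---more a matter of bookkeeping than a genuine obstacle---is confirming that the determinant appearing in \propref{orient} is literally one of the four determinants named in the hypothesis. This is transparent in the convex case, because all four edges $\tilde{X}_i\tilde{X}_{i+1}$ of the projected quadrilateral are boundary edges of $\tilde{\mathcal{S}}=\tilde{\mathcal{S}}_h$, so any $k$ selected by \propref{orient} indexes one of these four edges. No case analysis among the several convex sub-configurations of \figref{proj1} is needed, and no computation beyond \propref{orient} is required; the result is thus an immediate corollary of \propref{orient} together with the hypothesis that the four edge determinants are sign-definite.
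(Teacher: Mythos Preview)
Your proposal is correct and follows essentially the same approach as the paper, which simply records ``Follows from \propref{orient}.'' You have merely spelled out the two observations that make this immediate---that in the convex case $\tilde{\mathcal{S}}=\tilde{\mathcal{S}}_h$, and that the common sign hypothesis renders the choice of $k$ in \propref{orient} irrelevant---so your argument is a faithful expansion of the paper's one-line proof.
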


\begin{proof}
	Follows from \propref{orient}.
\end{proof}

We now investigate the dependence on the choice of the regularization function for the individual cases:
\begin{enumerate}
	\item Saddle type: In case $\det\lp \tilde{X}_{i} ~ \tilde{X}_{i+1}\rp < 0$, the stability of the sliding is of saddle type and does not depend on the choice of the regularization functions $\psi(\cdot)$ and $\phi(\cdot)$; see \figref{proj1} (b) and (c).
	\item Node/focus/center type: In case $\det\lp \tilde{X}_{i} ~ \tilde{X}_{i+1}\rp > 0$, then whether the sliding flow is attracting or repelling depends on the sign of the trace of $\textbf{J}$ {as given by \eqref{trace} and it generally depends on the choice of the regularization functions $\psi(\cdot)$ and $\phi(\cdot)$.} However, in \figref{proj1} (a) the sliding flow is always attracting, as $\tl{tr}\lp \textbf{J}\rp<0$ for any values of $\psi_{\hat{y}}^*$ and $\psi_{\hat{z}}^*$ (since all differences between $\beta_{1-4}$ and $\gamma_{1-4}$ in \eqref{trace} are negative). Recall also \corref{stabilityExample2}. On the other hand, for cases similar to the one illustrated in \figref{proj1} (d), the sign of $\tl{tr}\lp \textbf{J}\rp$ for fixed $\beta_{1-4}$ and $\gamma_{1-4}$ can vary for different values of $\phi_{\hat{y}}^*$ and $\psi_{\hat{z}}^*$, i.e it generally depends on the choice of the regularization functions $\psi(\cdot)$ and $\phi(\cdot)$;  i.e. there exist two separate pairs of regularization functions $\psi_1\lp\cdot\rp,\phi_1\lp\cdot\rp$ and $\psi_2\lp\cdot\rp,\phi_2\lp\cdot\rp$ such that for $\psi_1\lp\cdot\rp,\phi_1\lp\cdot\rp$ the sliding flow is of stable focus type, while for $\psi_2\lp\cdot\rp,\phi_2\lp\cdot\rp$ the sliding flow is of unstable focus type. Similar observations were made in \protect\cite{guglielmi2015a} and \protect\cite{guglielmi2017a}.
\end{enumerate}

\subsection{Crossed projection}
The two disjoint subregions of  $\tilde{\mathcal{S}}_h$ (see \figref{cr-un}) in the crossed projection are characterized by different kinds of stabilities. 
\begin{corollary}\corlab{this}
	Assume that $\tilde{\mathcal{S}}$ is a crossed projection (according to \defnref{DistCas}). Then one of the two disjoint subregions of $\tilde{\mathcal{S}}_h$ corresponds to stability of saddle type and the other corresponds to stability of focus/node/center type.
\end{corollary}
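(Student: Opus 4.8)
The plan is to apply \propref{orient} to each of the two disjoint components of $\tilde{\mathcal{S}}_h$ in the crossed case, and to exploit the combinatorial constraint imposed by \defnref{DistCas} on the signs of the relevant determinants. First I would recall the structure of a crossed projection: by \defnref{DistCas}, exactly two of the difference determinants $\delta_{1-4}$ are positive and two are negative, and (as discussed around \figref{cross-proof}) the vector $\chi_1$ is then either an edge or a diagonal of the quadrilateral, with all remaining cases obtained by the symmetries rotation, reflection and time reversal. In either configuration the region $\tilde{\mathcal S}_h$ splits into two disjoint pieces, each a ``triangular'' lobe sticking out past $\mathcal L_p$, and each such lobe has as two of its corners the endpoints of a consecutive pair $\tilde X_k,\tilde X_{k+1}$ that is also a pair of corners of $\mathcal S$ (this is exactly the hypothesis needed to invoke \propref{orient}).

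The key step is then to show that the two lobes are governed by consecutive pairs $\tilde X_k,\tilde X_{k+1}$ and $\tilde X_\ell,\tilde X_{\ell+1}$ whose determinants $\det(\tilde X_k~\tilde X_{k+1})$ and $\det(\tilde X_\ell~\tilde X_{\ell+1})$ have opposite signs; by \propref{orient} this immediately yields that one lobe is of saddle type and the other of focus/node/center type. I would establish the opposite-sign claim by a direct inspection of the two combinatorial pictures in \figref{cross-proof}: in the $\chi_1$-edge case the two outer lobes of $\tilde{\mathcal S}_h$ are bounded by the edges $\tilde X_1\tilde X_2$ and $\tilde X_3\tilde X_4$ (the two ``parallel-ish'' edges not crossed by the self-intersection), and the crossed condition $\det(\tilde X_1~\tilde X_2)\det(\tilde X_3~\tilde X_4)<0$ from \propref{crossed} (Condition 1) is precisely the statement that these two determinants differ in sign; in the $\chi_1$-diagonal case the analogous role is played by $\tilde X_2\tilde X_3$ and $\tilde X_4\tilde X_1$, and \propref{crossed} (Condition 4) gives $\det(\tilde X_2~\tilde X_3)\det(\tilde X_4~\tilde X_1)<0$, again an opposite-sign statement. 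In both cases \propref{orient} then assigns saddle type to one lobe and focus/node/center type to the other. Finally, since the conditions and the conclusion are invariant under the symmetries used to reduce to these two representative cases (relabelling the indices cyclically or reversing orientation only permutes/reflects the lobes and flips an even number of signs), the statement holds for every crossed projection.

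The main obstacle I anticipate is purely bookkeeping rather than conceptual: one must verify carefully that, for each of the two lobes, the consecutive pair $\tilde X_k,\tilde X_{k+1}$ picked out by \propref{orient} is indeed the one whose determinant appears with the sign dictated by \propref{crossed}, i.e. that the ``boundary edge of $\tilde{\mathcal S}_h$ which is also an edge of $\mathcal S$'' for a given lobe is the geometrically correct one in \figref{cross-proof}, and that the two lobes genuinely correspond to \emph{different} such edges (so that their determinants are the two factors of the product constrained to be negative). This amounts to checking a finite number of pictures, but it is where an error could slip in; once it is done, the opposite-sign conclusion and hence the corollary follow at once from \propref{orient}.
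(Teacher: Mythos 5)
Your proposal is correct and follows essentially the same route as the paper: the paper's proof likewise consists of applying \propref{orient} to each of the two disjoint subregions of $\tilde{\mathcal{S}}_h$ and observing (with reference to \figref{cr-un}) that the consecutive pairs $\tilde{X}_k,\tilde{X}_{k+1}$ attached to the two lobes have determinants of opposite sign. Your extra step of anchoring that opposite-sign claim in Conditions 1 and 4 of \propref{crossed} (for the $\chi_1$-edge and $\chi_1$-diagonal cases, respectively) is a sound elaboration of what the paper leaves to the figure.
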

\begin{proof}
	Follows from \propref{orient}, as for in one of the two distinct subregions of $\tilde{\mathcal{S}}_h$ we have $\det\lp \tilde{X}_{k} ~ \tilde{X}_{k+1}\rp < 0$, while  the other distinct subregion of $\tilde{\mathcal{S}}_h$ we have $\det\lp \tilde{X}_{k} ~ \tilde{X}_{k+1}\rp > 0$ (see \figref{cr-un}).
\end{proof}

\begin{figure}[h!]
	\centering
	\centering
	\begin{subfigure}[b]{0.3\textwidth}
		\centering
		\includegraphics[scale = 0.2]{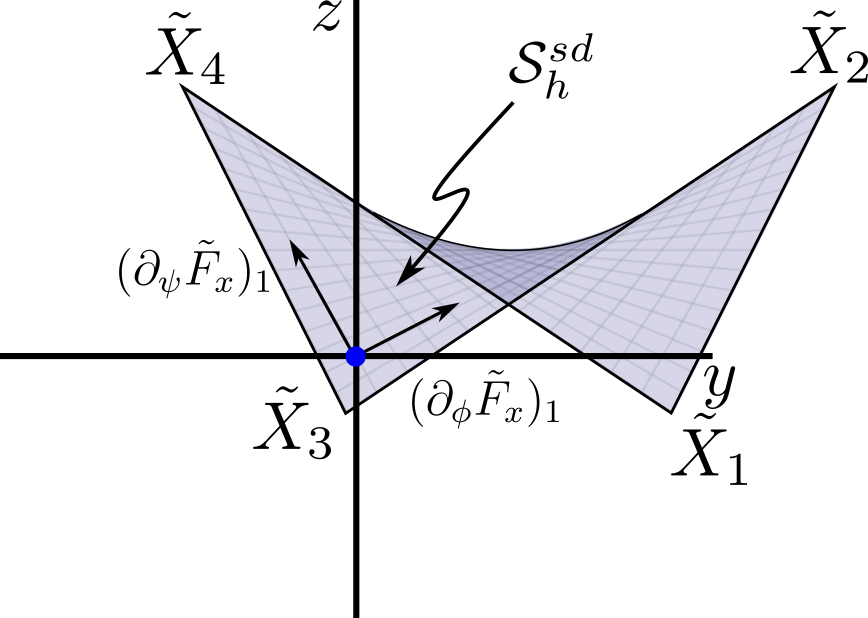}
		\caption{}
	\end{subfigure}
	~
	\begin{subfigure}[b]{0.3\textwidth}
		\centering
		\includegraphics[scale = 0.2]{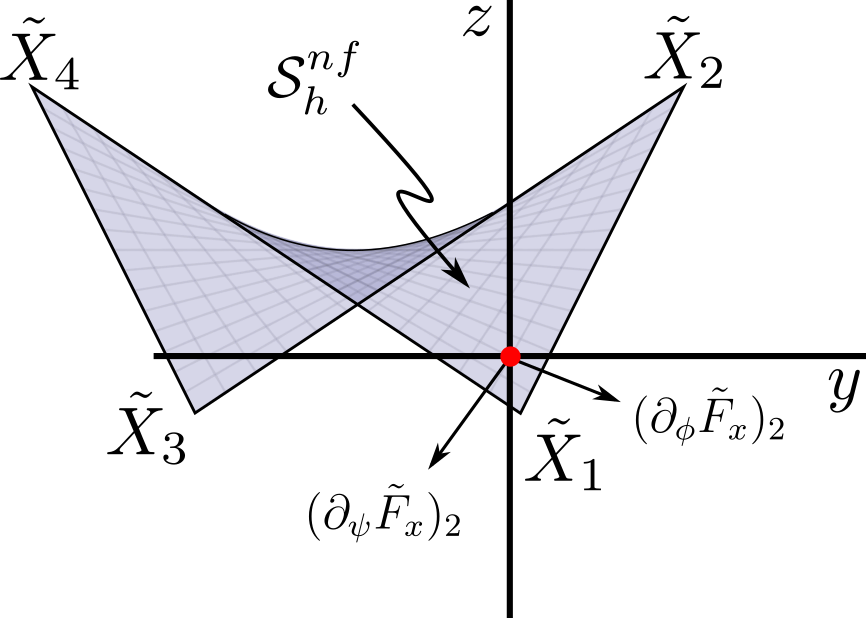}
		\caption{}
	\end{subfigure}
	~
	\begin{subfigure}[b]{0.3\textwidth}
		\centering
		\includegraphics[scale = 0.2]{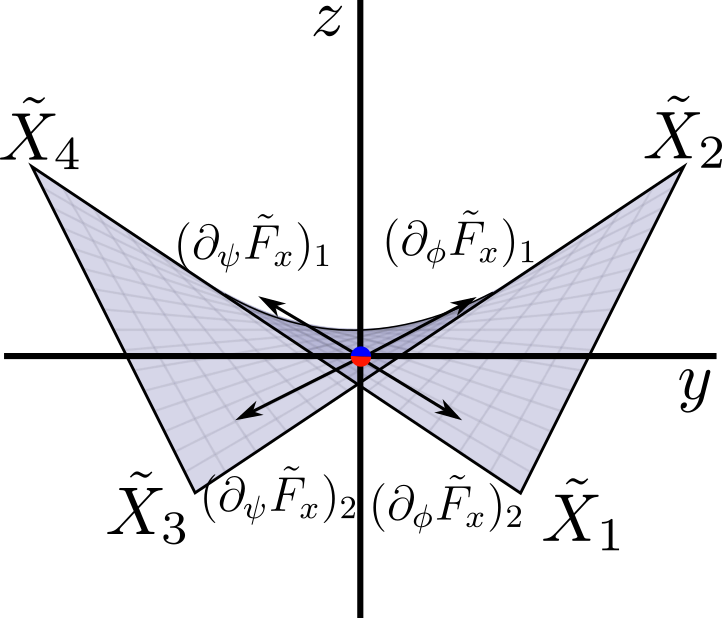}
		\caption{}		
	\end{subfigure}
	\caption{The tangent vectors in each of the distinct subregions of $\tilde{\mathcal{S}}$ for the case of a crossed projection are illustrated. (a) When the origin $(y,z) = (0,0)$ is contained in $\tilde{\mathcal{S}}_h^{sd}$ (see text for definition), then we have $\det\lp \textbf{D}{\tilde{F}_x\lp \psi_*, \phi_*\rp}\rp<0$ and the stability of the sliding flow is of saddle type. (b) When the origin $(y,z) = (0,0)$ is contained in $\tilde{\mathcal{S}}_h^{nf}$, then we have $\det\lp \textbf{D}{\tilde{F}_x\lp \psi_*, \phi_*\rp}\rp>0$ and the stability of the sliding flow is of node/focus/center type. (c) When the origin $(y,z) = (0,0)$ is contained in $\tilde{\mathcal{S}}_n$, then there exist two pairs $\lp \psi_*, \phi_*\rp_1$ and $\lp \psi_*, \phi_*\rp_2$ 
	for which $\tilde{\mathcal{S}}$ intersects with $\Lambda$, and therefore two sliding vector fields exist. In addition, we have $\det\lp \textbf{D}{\tilde{F}_x\lp \psi_*, \phi_*\rp_1}\rp<0$ and $\det\lp\textbf{D}{\tilde{F}_x\lp \psi_*, \phi_*\rp_2}\rp>0$, hence the stability of one sliding vector field is of saddle type, while the stability of the other is of node/focus/center type.}
	\figlab{cr-un}
\end{figure}

In the following, we will use $\tilde{\mathcal{S}}_h^{nf}$ to refer to the subregion of $\tilde{\mathcal{S}}_h$ for which the orientation of the unit box has been preserved under $\tilde{F}_x$. Similarly,  we will use $\tilde{\mathcal{S}}_h^{sd}$ to refer to the subregion of $\tilde{\mathcal{S}}_h$ for which the orientation of the unit box has been reversed under $\tilde{F}_x$. By \corref{this} it therefore follows that if the origin of the $yz-$plane is contained inside $\tilde{\mathcal{S}}_h^{nf}$ ($\tilde{\mathcal{S}}_h^{sd}$), then the stability of the sliding flow on $\Lambda$ is of node/focus/center type (saddle type, respectively). 

Recall that when the origin of the $yz-$plane is contained in $\tilde{\mathcal{S}}_n$, then a pair of sliding vectors exist. The results concerning the stability of these vector fields are general and do not depend on the shape of $\tilde{\mathcal{S}}$.

\begin{proposition}
	Assume that a pair of sliding vector fields exists. Then,  the stability of one of the sliding vector fields is of saddle type and the stability of the other sliding vector field is of node/focus type.
	\proplab{pairstab}
\end{proposition}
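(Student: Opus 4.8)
The plan is to reduce the statement to the elementary fact that a quadratic polynomial has derivatives of opposite sign at its two distinct roots. By \corref{cor1} and \propref{sigmas}, a pair of sliding vector fields exists exactly when the quadratic $q(\sigma)=\lp \tl{A}+\Gamma-\tl{B}\rp\sigma^2+\lp \tl{B}-2\Gamma\rp\sigma+\Gamma$ of \eqref{quadratic} has two distinct real roots $\sigma_\phi^{(+)}\neq\sigma_\phi^{(-)}$ in $(0,1)$; in particular $\tl{A}+\Gamma-\tl{B}\neq 0$ and the discriminant of $q$ (which equals $\Delta$ of \eqref{delta}) is positive. The two sliding vector fields correspond to the two points of $C_0$ with $\lp\sigma_\psi,\sigma_\phi\rp=\lp\sigma_\psi^{(\pm)},\sigma_\phi^{(\pm)}\rp$ as in \eqref{sigmas}, and by \defnref{slide2} together with \eqref{signs} the one attached to $\lp\sigma_\psi^{(\pm)},\sigma_\phi^{(\pm)}\rp$ is of saddle type if $\det\lp \textbf D\tilde F_x\rp<0$ there and of node/focus type if $\det\lp \textbf D\tilde F_x\rp>0$ there. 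So everything reduces to showing that $\det\lp \textbf D\tilde F_x\rp$ has opposite signs at the two points.

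First I would rewrite $\tilde F_x$ in the coordinates $\lp u,\sigma\rp=\lp\sigma_\psi,\sigma_\phi\rp$: using assumption (A) and \eqref{FxTil}, $\tilde F_x=(P,Q)$ with $P=uB_1+(1-u)B_2$ and $Q=uG_1+(1-u)G_2$, where $B_1,B_2$ are the affine-in-$\sigma$ interpolants of $(\beta_1,\beta_4)$ and $(\beta_2,\beta_3)$ and $G_1,G_2$ the corresponding ones for the $\gamma$'s. Since $P$ and $Q$ are bilinear in $\lp u,\sigma\rp$, the $u\sigma$-terms in $\det\lp\textbf D\tilde F_x\rp=\partial_uP\,\partial_\sigma Q-\partial_\sigma P\,\partial_uQ$ cancel, so $\det\lp\textbf D\tilde F_x\rp$ is an \emph{affine} function of $\lp u,\sigma\rp$. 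Its zero set is therefore a straight line --- the fold curve of $\tilde F_x$, whose image is the parabolic line $\mathcal L_p$ (see \defnref{tildeSnDefn}); and since the origin of the $yz$-plane lies in the \emph{open} set $\tilde{\mathcal S}_n$, which is disjoint from $\mathcal L_p$, both of its preimages under $\tilde F_x$ are regular points, so $\det\lp\textbf D\tilde F_x\rp\neq 0$ at each.

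The core computation is then to evaluate $\det\lp\textbf D\tilde F_x\rp$ on $C_0$. On $C_0$ the relation $P=0$ gives $u=B_2/(B_2-B_1)$ (assuming $B_1\not\equiv B_2$, otherwise one exchanges the roles of $y$ and $z$ and runs the same argument with the analogous quadratic in $\sigma_\psi$), so the restriction of $Q$ to $C_0$ becomes $h(\sigma)=\lp B_2G_1-B_1G_2\rp/\lp B_2-B_1\rp=-q(\sigma)/\lp B_2-B_1\rp$, the identity $B_2G_1-B_1G_2=-q(\sigma)$ being a one-line expansion in terms of the determinants of \eqref{delta}. On the other hand, differentiating $Q\lp u(\sigma),\sigma\rp$ along $C_0$ and substituting $u'=-\partial_\sigma P/\partial_uP$ from $P\equiv 0$ yields $h'(\sigma)=\det\lp\textbf D\tilde F_x\rp/\partial_uP$. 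Since $\partial_uP=-(B_2-B_1)$ and $q\lp\sigma_\phi^{(\pm)}\rp=0$, the two formulas for $h'$ combine to give $\det\lp\textbf D\tilde F_x\rp=q'\lp\sigma_\phi^{(\pm)}\rp$ at the corresponding point of $C_0$, up to the positive Jacobian factor from the change of variables back to $\lp\psi,\phi\rp$ (irrelevant for signs, cf.\ \eqref{diffEquiv}). As $q$ is a genuine quadratic, $q'\lp\sigma_\phi^{(+)}\rp=-q'\lp\sigma_\phi^{(-)}\rp\neq 0$, so $\det\lp\textbf D\tilde F_x\rp$ is positive at one point of $C_0$ and negative at the other; by \eqref{signs} and \defnref{slide2}, one of the two sliding vector fields is of node/focus type and the other of saddle type.

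I expect the main obstacle to be bookkeeping: chaining the signs correctly through $h\mapsto h'\mapsto\det\lp\textbf D\tilde F_x\rp\mapsto q'$, and cleanly handling the degenerate sub-case $B_1\equiv B_2$ where $P=0$ cannot be solved for $u$. For the latter I would fall back on the purely geometric version: the zero line of the affine function $\det\lp\textbf D\tilde F_x\rp$ is the fold curve of $\tilde F_x$, it separates the open square into a region mapped orientation-preservingly and one mapped orientation-reversingly, and the two (regular) preimages of the origin --- being the two sheets over a point of $\tilde{\mathcal S}_n$ that merge along $\mathcal L_p$, exactly as in \figref{cr-un} --- lie one in each region; hence $\det\lp\textbf D\tilde F_x\rp$ takes opposite signs there.
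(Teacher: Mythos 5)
Your proposal is correct, and your primary argument takes a genuinely different route from the paper. The paper's own proof is a one-sentence geometric observation: every point of $\tilde{\mathcal{S}}_n$ has two preimages which lie on two overlapping ``sheets'' of opposite orientation, so $\det\lp \textbf{D}\tilde F_x\rp$ has opposite signs at the two preimages; this is exactly your fallback argument (your added remark that bilinearity makes $\det\lp\textbf{D}\tilde F_x\rp$ affine in $\lp\sigma_\psi,\sigma_\phi\rp$, so that the fold set is a straight line mapping onto $\mathcal L_p$, is a useful sharpening the paper does not state). Your main argument instead reduces everything to the quadratic $q$ of \eqref{quadratic}: I checked the key identity $B_2G_1-B_1G_2=-\lp \textnormal{A}\sigma^2+\textnormal{B}\sigma(1-\sigma)+\Gamma(1-\sigma)^2\rp=-q(\sigma)$ and the implicit-differentiation step giving $\det\lp\textbf{D}\tilde F_x\rp=\tfrac14\,q'\lp\sigma_\phi^{(\pm)}\rp$ at the two points of $C_0$, and both are right; since a nondegenerate quadratic has derivatives of opposite sign at its two distinct roots, the conclusion follows from \eqref{signs} and \defnref{slide2}. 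What your computation buys is a fully explicit, sign-verifiable proof that the two preimages really do lie on sheets of opposite orientation --- a fact the paper asserts by appeal to \figref{map} and \figref{cr-un} --- together with the quantitative statement that the two determinants are exact negatives of each other (up to the common positive factor $\tfrac14\psi_*'\phi_*'$ in $\textbf{J}$); what the paper's shorter argument buys is independence from the coordinate degeneracy $B_1\equiv B_2$ that forces you into a case split. Your handling of that degenerate case (swap the roles of $y$ and $z$, or revert to the geometric argument) is adequate.
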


\begin{proof}
	Every point on $\tilde{\mathcal{S}_n}$ has two pre-images under $\tilde{F}_x$, and we can view $\tilde{\mathcal{S}_n}$ as two overlapping ``sheets'' with different orientations (see \figref{map} and \figref{cr-un} (c)). Therefore, one sheet corresponds to an area of the unit box where the orientation has been preserved under $\tilde{F}_x$ and one sheet corresponds to an area of the unit box that the orientation has been reversed under $\tilde{F}_x$.
\end{proof}

The important conclusion of \propref{pairstab}, stated as \thmref{uniqueStable}, is that even in cases where two sliding vector fields are defined on $\Lambda$, at most one of them is stable. 

\subsection{Concave cases} 
In the concave cases, we have the following
\begin{corollary}
	Assume that $\tilde{\mathcal{S}}$ is a concave projection (according to  \defnref{DistCas}) with the endpoint of $\tilde{X}_k$ being the tip. If:
	\begin{eqnarray*}
	\det\lp \tilde{X}_k ~ \tilde{X}_{k+1}\rp< 0 \quad \tl{ and }\quad 	\det\lp \tilde{X}_k ~ \tilde{X}_{k-1}\rp< 0,
	\end{eqnarray*}
	then the stability of the sliding vector field is of saddle type. On the other hand, if:
	\begin{eqnarray*}
	\det\lp \tilde{X}_k ~ \tilde{X}_{k+1}\rp> 0 \quad \tl{ and }\quad 	\det\lp \tilde{X}_k ~ \tilde{X}_{k-1}\rp> 0,
	\end{eqnarray*}
	then the stability of the sliding vector field is of node/focus/center type.
\end{corollary}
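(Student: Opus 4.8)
The plan is to derive the corollary from \propref{orient}, exactly as the analogous statement for convex projections and \corref{this} for crossed projections were obtained; the whole content is to identify, for a concave projection with tip at the endpoint of $\tilde X_k$, which consecutive pairs of projected vectors play the role of the ``corner pair'' appearing in \propref{orient}.

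First I would restrict to the case in which a unique sliding vector field exists, i.e. the origin of the $yz$-plane lies in $\tilde{\mathcal S}_h$: if instead it lies in $\tilde{\mathcal S}_n$ then \propref{pairstab} already produces one saddle and one focus/node, and if it lies outside $\tilde{\mathcal S}$ there is no sliding vector field. For a concave projection, $\tilde{\mathcal S}_h$ decomposes into a convex subregion and a crossed subregion --- the same decomposition used for \propref{propconc}, see \figref{conv-double-proof}(a) --- so the sliding point lies in exactly one of them and it suffices to treat each.

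Next comes the key geometric observation: the tip of the concave quadrilateral, the endpoint of $\tilde X_k$, is a corner shared by the convex subregion, the crossed subregion and $\tilde{\mathcal S}$ itself, and the two boundary edges of $\tilde{\mathcal S}$ meeting at this corner are precisely the segments $\tilde X_{k-1}\tilde X_k$ and $\tilde X_k\tilde X_{k+1}$, one bounding the convex subregion and the other the crossed subregion. Applying \propref{orient} with the tip and its neighbour along the relevant edge as the corner pair then gives, recalling also \eqref{signs}, that the stability is of saddle type when the corresponding determinant among $\det(\tilde X_k~\tilde X_{k+1})$ and $\det(\tilde X_k~\tilde X_{k-1})$ is negative, and of node/focus/center type when it is positive. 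Since the hypothesis forces both determinants to share a sign, the classification is the same whichever subregion contains the origin, which is the assertion. The remaining concave configurations of \figref{concaves} follow by the reflections and cyclic relabellings recorded in \tabref{switches}.

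The only delicate point, and the one I expect to be the main obstacle, is the orientation bookkeeping hidden in ``the corresponding determinant'': one must verify, case by case against the pictures (as in the proofs of \propref{propdef} and \propref{orient}), that for each of the two $\tilde{\mathcal S}_h$ subregions the tip edge is genuinely a boundary edge shared with $\tilde{\mathcal S}$, and that the origin lies on the side of that edge for which \propref{orient} yields precisely the inequalities as written --- in particular that the ``incoming'' edge $\tilde X_{k-1}\tilde X_k$ enters through $\det(\tilde X_k~\tilde X_{k-1})$ rather than through $\det(\tilde X_{k-1}~\tilde X_k)$. Once the signs are pinned down in one representative concave case, everything else is routine.
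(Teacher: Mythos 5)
Your overall route --- reduce everything to \propref{orient} --- is exactly the paper's (its proof of this corollary is the single line ``Follows from \propref{orient}''), but the step you yourself flag as ``the only delicate point'' is where the argument breaks, and it is not routine bookkeeping. First, \propref{orient} is stated for a \emph{cyclically ordered} consecutive pair, so the edge joining $\tilde X_{k-1}$ to $\tilde X_k$ enters through $\det\lp \tilde X_{k-1}~\tilde X_k\rp=-\det\lp \tilde X_k~\tilde X_{k-1}\rp$. Under the corollary's hypothesis the two \emph{written} determinants share a sign, hence the two cyclic determinants $\det\lp \tilde X_{k-1}~\tilde X_k\rp$ and $\det\lp \tilde X_{k}~\tilde X_{k+1}\rp$ have \emph{opposite} signs; applying \propref{orient} to your two candidate subregions would therefore return opposite classifications, contradicting your concluding claim that ``the classification is the same whichever subregion contains the origin.'' Second, the ``key geometric observation'' is not correct as stated: for a concave projection the reversed-orientation flap folds into the notch at the reflex vertex, so $\tilde{\mathcal S}_n$ sits in the notch and $\tilde{\mathcal S}_h$ is the concave quadrilateral itself, covered with a single orientation. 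The subregion containing the origin need not have the tip as a corner at all, and even when it abuts a tip edge, the quadrilateral straddles the extended line through $\tilde X_k$ and $\tilde X_{k\pm1}$ (precisely because $\tilde X_k$ is the reflex vertex), so membership in that subregion does \emph{not} determine the sign of $\det\lp \tilde X_k~\tilde X_{k\pm1}\rp$, and the hypothesis of \propref{orient} ("corners of this subregion and of $\mathcal S$") is not met for that pair.

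That this cannot be repaired by sign conventions alone is shown by a concrete configuration: take $\tilde X_1=(-0.5,-0.2)$, $\tilde X_2=(-1.5,0.8)$, $\tilde X_3=(-0.5,-2.2)$, $\tilde X_4=(0.5,0.8)$. Then $\delta_4<0<\delta_1,\delta_2,\delta_3$, so the projection is concave with the tip (reflex vertex, the reading you adopt) at $\tilde X_1$, and $\det\lp\tilde X_1~\tilde X_2\rp=-0.7<0$, $\det\lp\tilde X_1~\tilde X_4\rp=-0.3<0$, i.e. the ``saddle'' hypothesis holds. Yet $\tilde F_x$ has a unique zero in $(-1,1)^2$, at $(\psi_*,\phi_*)\approx(0.45,-0.55)$, where $\det\lp\textbf{D}\tilde F_x\rp\approx 0.55>0$, so by \eqref{signs} the sliding vector field is of node/focus/center type. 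Any correct proof must therefore identify which boundary edge of $\tilde{\mathcal S}$ the origin's subregion actually shares with $\tilde{\mathcal S}_h$ --- for a concave projection these are the two edges \emph{not} adjacent to the tip, whose supporting lines do not cut through $\tilde{\mathcal S}_h$ --- rather than the two tip edges your argument uses; as it stands the reduction to \propref{orient} does not go through.
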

\begin{proof}
	Follows from \propref{orient}.
\end{proof}


\section{Bifurcations of sliding vector fields using blowup}\seclab{bifurcation}
In the following, we will describe how sliding can appear or disappear along $\Lambda$ and what consequences this has on the dynamics near $\Lambda$. In line with PWS theory, we will call such bifurcations \textit{sliding bifurcations}. To illustrate the findings we will promote the following blowup approach, also used by Peter Szmolyan in \protect\cite{peterTalk} to study a gene regulatory network in $\mathbb R^2$. Consider the following transformation
\begin{align}
 (x,r,(\bar y,\bar z,\bar \varepsilon))\mapsto (x,y,z,\varepsilon) = (x,r \bar y,r\bar z,r\bar \varepsilon),\eqlab{blowup1}
\end{align}
for $r\ge 0$ and $(\bar y,\bar z,\bar \varepsilon)\in S^2 = \{\bar y^2+\bar z^2+\bar \varepsilon^2=1\}$. Clearly this mapping just corresponds to introducing spherical coordinates in the $(y,z,\varepsilon)$-space. Therefore it is one-to-one for $r>0$ but $\{r=0\}$ is mapped onto $\Lambda \times \{0\}$. In this sense, the inverse process of \eqref{blowup1} blows up $\Lambda\times \{0\}$ to a cylinder $\mathcal I\times S^2$. See \figref{LambdaBlowup}.
\begin{figure}[ht] 
	\begin{center}
		\centering
		{\includegraphics[scale=0.1]{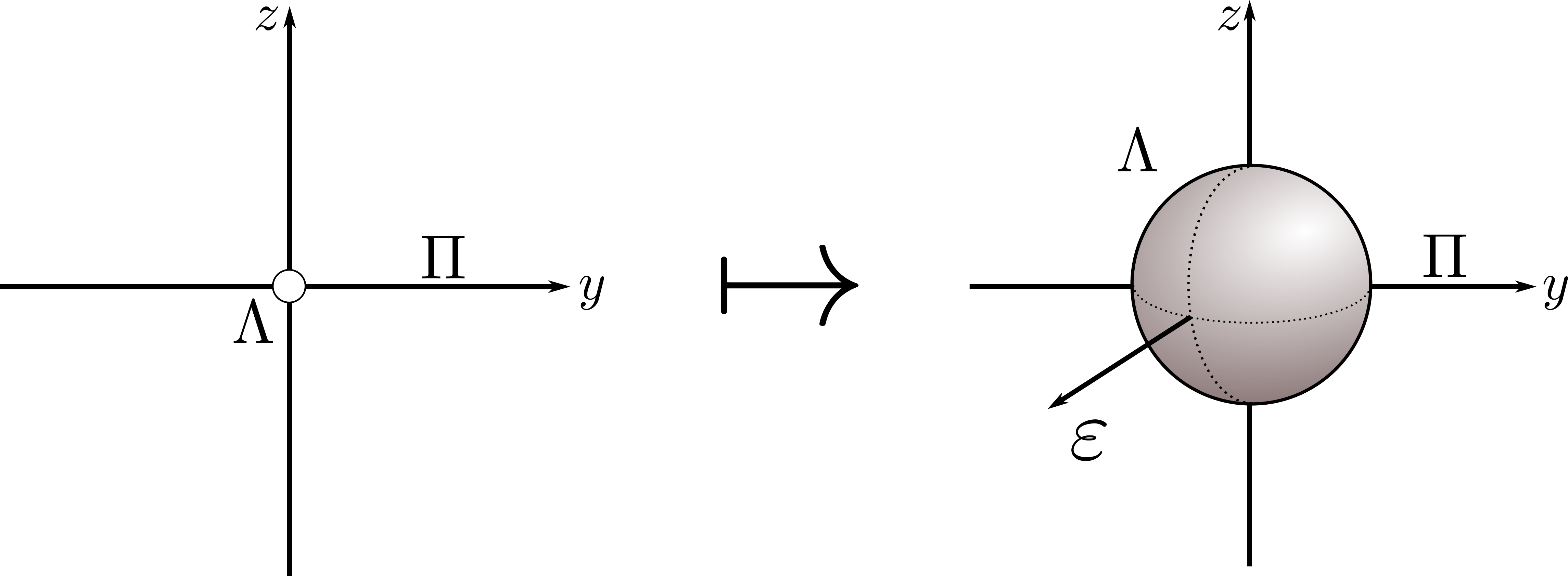}}
	\end{center}
	\caption{ The blowup \eqref{blowup1}.}
	\figlab{LambdaBlowup}
\end{figure}
Notice that by \eqref{varhat} and \eqref{blowup1} we have that
\begin{eqnarray}
\begin{aligned}
 \hat y =& \epsilon^{-1} y = \frac{\bar y}{\bar \epsilon},\\
 \hat z =& \epsilon^{-1} z = \frac{\bar z}{\bar \epsilon},
\end{aligned}
\eqlab{hatyhatz}
\end{eqnarray}
We can therefore think of $(\hat y,\hat z)$ as coordinates for the chart obtained by the central projection from the sphere $(\bar y,\bar z,\bar \varepsilon)\in S^2$ onto the plane $\bar \epsilon=1$.  In this way, \eqref{blowup1} is identical to a Poincare compactification \protect\cite{carmen} of the $(x,\hat y,\hat z)$-system with $r=\epsilon=0$. By using central projections onto the other relevant planes $\bar z=\pm 1$, $\bar y=\pm 1$, we can therefore connect the $(x,\hat y,\hat z)$-dynamics with the PWS system outside $r>0$. 

However, the system on $r\ge 0$, $(\bar y,\bar z,\bar \varepsilon)\in S^2$ is still singular. $\psi(\varepsilon^{-1}y)$, for example, becomes 
$\psi(\bar \varepsilon^{-1} \bar y)$, using \eqref{blowup1} which is not defined along $r\ge 0,(\bar y,\bar z,\bar \varepsilon) = (0, \pm 1,0)$. Consider first $r\ge 0,(\bar y,\bar z,\bar \varepsilon) = (0, 1,0)$. Notice that this set, under \eqref{blowup1}, gets mapped to $\Pi_1$. We therefore blowup $(\bar y,\bar z,\bar \varepsilon)=(0,1,\bar \varepsilon)$ by applying the transformation:
\begin{align}
(\rho,(\bar{\bar y},\bar{\bar \epsilon}))\mapsto (\bar z^{-1}\bar y,\bar z^{-1}\bar \varepsilon) =\rho ( \bar{\bar y},\bar{\bar \varepsilon}),\eqlab{blowup2}
\end{align}
for $\rho \ge 0$ and $(\bar{\bar y},\bar{\bar \varepsilon})\in S^1$. In this way, under the image process of \eqref{blowup2} and \eqref{blowup1} we have blown up $\Pi_1$ to a cylinder $r\ge 0,x\in \mathcal I, (\bar{\bar y},\bar{\bar \varepsilon})\in S^1$. We proceed in a similar way for $(\bar y,\bar z,\bar \varepsilon)=(-1,0,0), (\bar y,\bar z,\bar \varepsilon)=(0,-1,0)$ and $(\bar y,\bar z,\bar \varepsilon)=(1,0,0)$. Notice that these points are mapped to $\Pi_2$, $\Pi_3$ and $\Pi_4$ under \eqref{blowup1} for $r\ge 0$, respectively, and these objects are therefore also under the inverse process blown up to cylinders. This produces the final diagram in \figref{LambdaBlowup2}.

\begin{figure}[h!] 
	\begin{center}
		\centering
		{\includegraphics[scale=0.1]{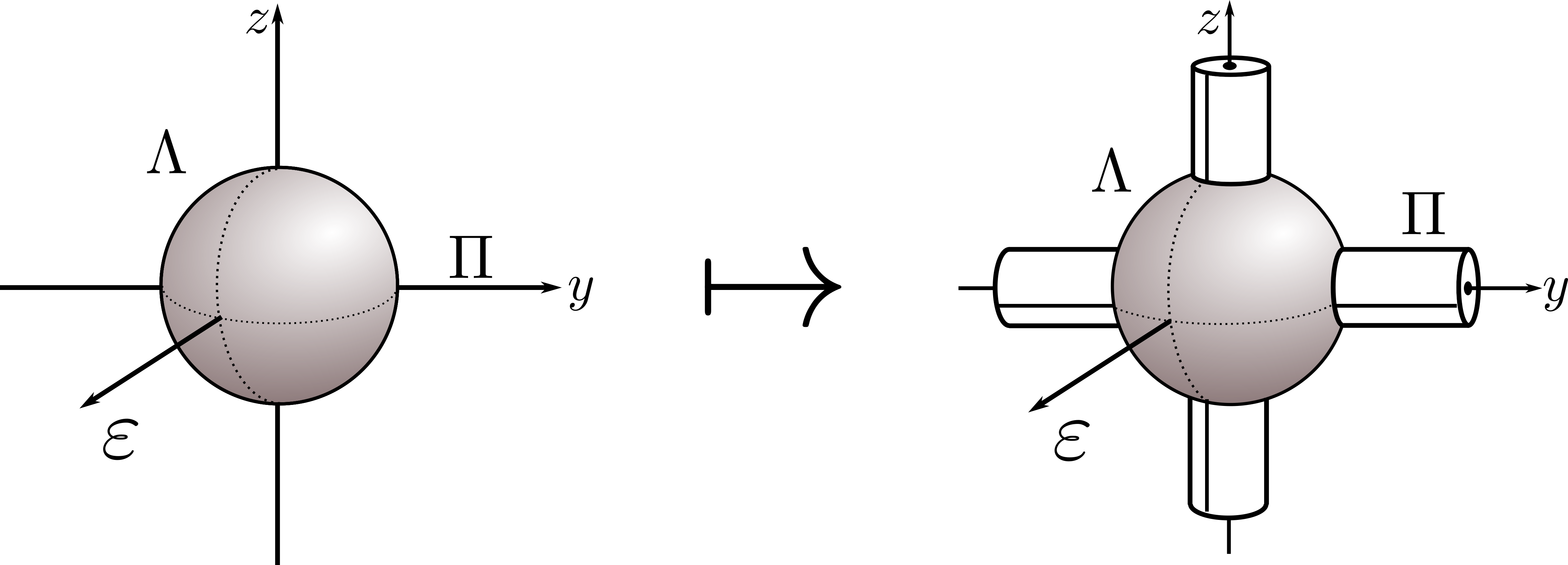}}
	\end{center}
	\caption{The blowup \eqref{blowup2}.}
	\figlab{LambdaBlowup2}
\end{figure}

Now, to illustrate our blowup approach, we will consider the case where $\tilde{\mathcal{S}}$ is a crossed projection for all $x\in \mathcal I$. We suppose  that the location of $\tilde{\mathcal{S}}$ in the $yz-$plane depends upon $x$ in a translational fashion. We consider two different examples in the following sections below. For simplicity, we leave out all the necessary calculations (the interested reader can consult \appref{aa} for a short description and \protect\cite{krupa_extending_2001,KristiansenHogan2018,kriBlowup} for similar computations in other settings) and just present the results in diagrams. 
\subsection{Entering (leaving) $\tilde{\mathcal{S}}_h$ through one of the straight segments}
The first example is seen in \figref{sh-out}. The top row shows the different regions $\tilde{\mathcal S}$ for different $x$: $x<x_b$, $x=x_b$ and $x>x_b$. In (a) where $x<x_b$, for example, $\tilde{\mathcal S}_h$ intersects $(y,z)=(0,0)$, meaning that $\mathcal S$ intersects $\Lambda$ at one single point. In this case, the critical manifold (and hence the sliding) is of saddle type and we illustrate the dynamics of the layer problem, see \eqref{layprob} in the $(\hat y,\hat z)$-coordinates, on the sphere, using the central projection, below $\tilde{\mathcal S}$. Recall that $\dot x=0$ for this layer problem so it is actually $2D$. We only have dynamics on $x$ on the reduced problem on $C_0$ (see \eqref{c0dyn}).  In the four quadrants around the sphere, we illustrate the projections $\tilde X_i$ of the four vector-fields $X_i$, appearing as corners of the region $\tilde{\mathcal S}$. Along the blown up $\Pi_1$-cylinder, emanating from $(\bar y,\bar z,\bar \varepsilon)=(1,0,0)$ on the sphere, we then illustrate the dynamics of the layer problem and the direction (in this $yz$-projection) of the corresponding reduced flow (or equivalently sliding flow),  see \eqref{layer} and \eqref{reduced} in the $(z,\hat y)$-coordinates. 

In (b) we present the same diagram for a different $x$-value $x=x_b$ where we suppose that $\tilde{\mathcal S}$ now intersects the origin in the $yz$-plane along the edge $\chi_3$. Then $\tilde X_4$ and $\tilde X_3$ are anti-parallel vectors and hence the sliding vector-field along $\Pi_4 \cap\{x=x_b\}$ vanishes.\footnote{Notice that this is a consequence of our assumption (A). General nonlinear unfoldings of $X_{3}$ and $X_4$, will produce a locally unique pseudo-equilibrium of the sliding vector-field along $\Pi_4$ at $x=x_b$, $y=z=0$ in \figref{sh-out} (b).} Going from (a) to (b) we see that $C_0$ intersects this line of equilibria at precisely $x=x_b$. In (c) where $x>x_b$, $\tilde{\mathcal S}$ does not intersect $(y,z)=(0,0)$ and therefore sliding along $\Lambda$ has seized to exist. Also, as a consequence, we see that the direction of the sliding vector-field along $\Pi_4$ has changed direction. This example demonstrates how the blowup approach can be used, not only as a computational and dynamical method, but also as an informative, illustrative approach to present the consequences of the sliding bifurcations.  

It is also possible to study the case where $\tilde{\mathcal S}$ crosses $(y,z)=(0,0)$ along the edges $\chi_1$ and $\chi_3$ in such a way that the fast subsystem always has an equilibrium. 
We illustrate the bifurcations in \figref{sn-sh} focussing on the generic cases. In the top row we see that $\tilde{\mathcal S}$ transverse the $yz$-plane in such a way that it always intersects $(y,z)=(0,0)$. \figref{sn-sh} (a) is identical to \figref{sh-out} (a). The details are similar to the case illustrated in \figref{sh-out}, the main difference being that the dynamics on the sphere has two equilibria inside $\tilde{\mathcal S}_n$. 
%
%
%
%
From (a) to (b), $\tilde{\mathcal S}$ has crossed $(y,z)=(0,0)$ along $\chi_1$. As a result a sliding bifurcation occurs along $\Pi_1$ and, in (b), bottom row, a stable node appears. Notice the resulting change of direction of the sliding flow along the corresponding cylinder. Similarly, from (b) to (c), bottom row, the saddle has disappeared due to a collision with the blowup of $\Pi_4$. This collision is due to the sliding bifurcation that occurs along $\Pi_4$ when $\tilde{\mathcal S}$ intersects $(y,z)=(0,0)$ along $\chi_3$. As a result, in agreement with \figref{sh-out}, we see that the sliding flow along $\Pi_4$ changed direction from (b) to (c). We emphasize, that in case (b), we can only prove that the dynamics on the sphere is as illustrated in \figref{sn-sh}(b), when the origin is sufficiently close to $\chi_1$. Further away, Hopf bifurcations could occur, producing global limit cycles that we cannot study by our local methods. The diagram in (c) is therefore also just a potential phase portrait (which we can reproduce numerically for specific values).

\begin{figure}[h!]
	\centering
	\begin{subfigure}[b]{0.3\textwidth}
		\centering
		\includegraphics[scale = 0.45]{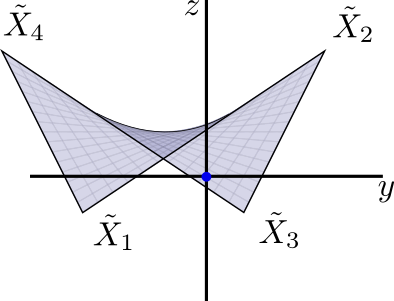}
	\end{subfigure}
	~
	\begin{subfigure}[b]{0.3\textwidth}
		\centering
		\includegraphics[scale = 0.45]{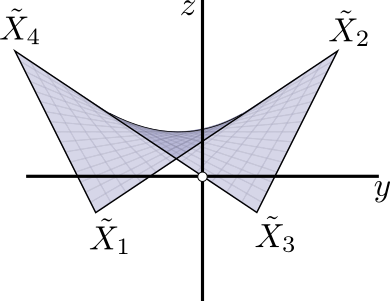}
	\end{subfigure}
	~
	\begin{subfigure}[b]{0.3\textwidth}
		\centering
		\includegraphics[scale = 0.45]{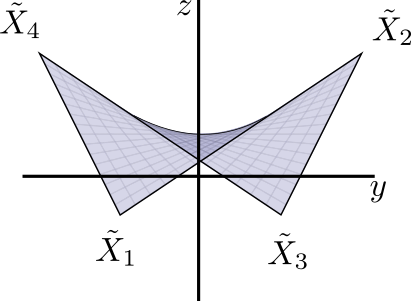}
	\end{subfigure}\\
	\vskip2pt
	\begin{subfigure}[b]{0.3\textwidth}
		\centering
		\includegraphics[scale = 0.41]{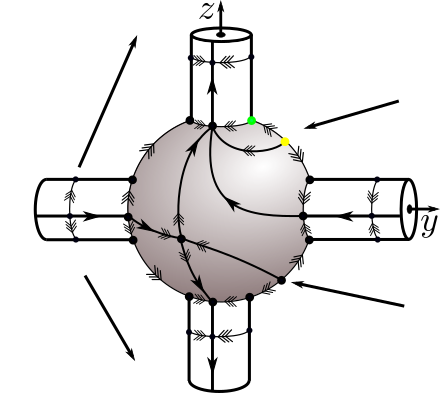}
		\caption{}
	\end{subfigure}
	~
	\begin{subfigure}[b]{0.3\textwidth}
		\centering
		\includegraphics[scale = 0.41]{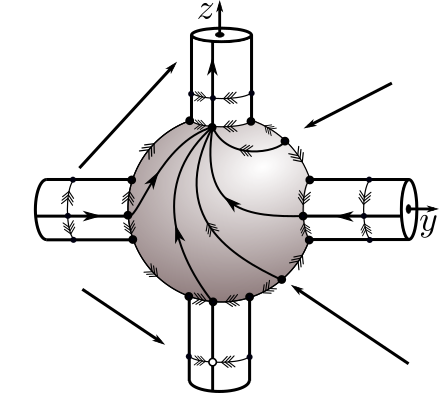}
		\caption{}
	\end{subfigure}
	~
	\begin{subfigure}[b]{0.3\textwidth}
		\centering
		\includegraphics[scale = 0.41]{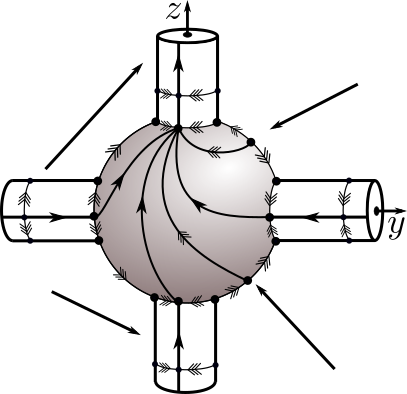}
		\caption{}
	\end{subfigure}
	~
	\caption{Disappearance of a unique sliding vector field by escaping $\tilde{\mathcal{S}}_h$. (a) For $x<x_b$,  the origin $(y,z) = (0,0)$ lies inside $\tilde{\mathcal{S}_h}$ and a sliding vector fields exists. (b) For $x=x_b$, $(y,z) = (0,0)$ lies on $\chi_3$ and at the unique equilibrium point of the fast subsystem disappears. (c) For $x>x_b$, the origin $(y,z) = (0,0)$ lies outside $\tilde{\mathcal{S}}_n$ and the fast subsystem has no equilibrium point, therefore no sliding vector field exists. Triple-headed arrows are used to indicate hyperbolic directions. Single-headed arrows, on the other hand, are representing center directions. All the analysis can be done in directional charts, see \appref{aa} and \protect\cite{krupa_extending_2001,KristiansenHogan2018,kriBlowup}. }
	\figlab{sh-out}
\end{figure}

\begin{figure}[h!]
	\centering
	\begin{subfigure}[b]{0.3\textwidth}
		\centering
		\includegraphics[scale = 0.45]{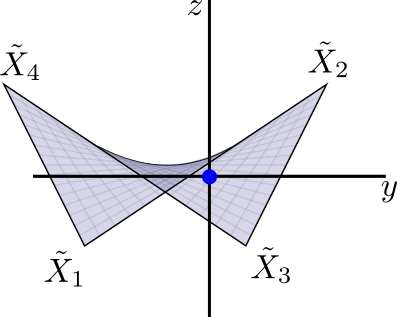}
	\end{subfigure}
	~
	\begin{subfigure}[b]{0.3\textwidth}
		\centering
		\includegraphics[scale = 0.45]{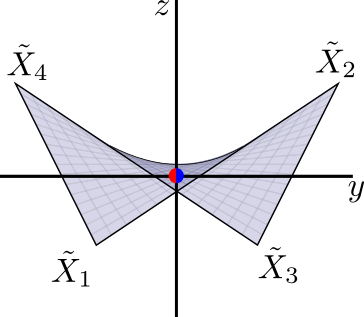}
	\end{subfigure}
	~
	\begin{subfigure}[b]{0.3\textwidth}
		\centering
		\includegraphics[scale = 0.45]{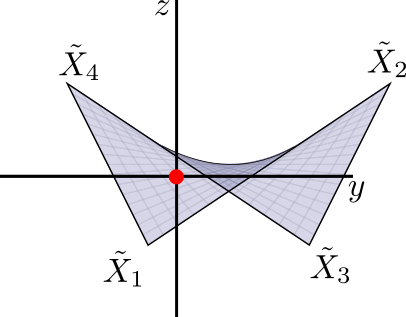}
	\end{subfigure}\\
	\vskip2pt
	\centering
	\begin{subfigure}[b]{0.3\textwidth}
		\centering
		\includegraphics[scale = 0.41]{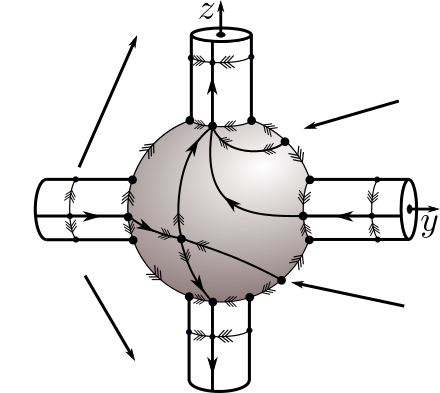}
		\caption{}
	\end{subfigure}
	~
	\begin{subfigure}[b]{0.3\textwidth}
		\centering
		\includegraphics[scale = 0.41]{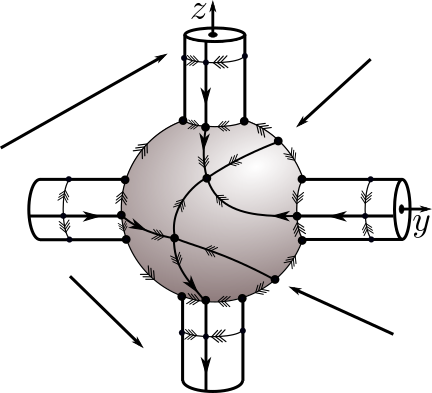}
		\caption{}
	\end{subfigure}
	~
	\begin{subfigure}[b]{0.3\textwidth}
		\centering
		\includegraphics[scale = 0.41]{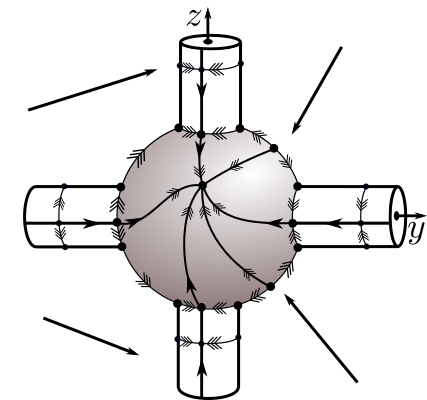}
		\caption{}
	\end{subfigure}
	\caption{Disappearance of one of the two sliding vector fields when entering $\tilde{\mathcal{S}}_h$ from $\tilde{\mathcal{S}}_n$. In (a) and (c), the origin $(y,z) = (0,0)$ lies inside $\tilde{\mathcal{S}}_h$ and a unique sliding vector field exists, of saddle and of node/focus type respectively. In (b), the origin $(y,z) = (0,0)$ lies inside $\tilde{\mathcal{S}}_n$ and a pair of sliding vector fields.}
	\figlab{sn-sh}
\end{figure}

\subsection{Entering (leaving) $\tilde{\mathcal{S}}_n$ through $\mathcal{L}_p$: The saddle-node bifurcation}
In \figref{Lp} we illustrate another sliding bifurcation. In column (a), we see that $\tilde{\mathcal S}$ does not intersect the origin in the $yz$-plane. As a consequence, the dynamics on the sphere does not have any equilibria and the equilibrium that appears along the south pole is the global attractor for the dynamics on the sphere. This creates a mechanism from going from $z>0$ to $z<0$ through stable sliding along $\Pi_4$. In (b) for $x=x_b$, $\tilde{\mathcal S}$ has moved upwards such that $\tilde{\mathcal S}$ now intersects $(y,z)=(0,0)$ along the parabolic line. As a result, there exists a (nonhyperbolic) saddle-node equilibrium of the layer problem \eqref{layprob} which in \figref{Lp}(c) has become a saddle and a stable node for $x>x_b$. Notice, that as opposed to the sliding bifurcation in \figref{sh-out}, this bifurcation does not alter the sliding dynamics along either of the codimension-1 sliding planes $\Pi_i$, $i=1,\ldots, 4$. It is a bifurcation on the sphere, but it has global consequences. In (c), points with $z>0$ cannot get to $z<0$ due to the existence of the unstable manifold of the saddle. All the analysis is based on calculations done in charts, see \appref{aa}, working sufficiently close to the parabolic line. Recall again that the  $(\hat y,\hat z)$-system \eqref{layprob} is a global nonlinear system and hence, further away the bifurcation in (b), limit cycles and homoclinics could appear, which we are not able to study by our predominantly local methods (without imposing additional structure). Interestingly, the bifurcation in \figref{Lp} actually has two generic types depending on the location of the strong stable manifold of the saddle-node and the unique center manifold coming from $\Pi_4$. The two cases are illustrated in \figref{Lp}(b) and \figref{LpNew}(b). The boundary of the two cases is illustrated in \figref{LpNew}(a). There are therefore also further variations of \figref{Lp}(c), the details which we do not present here. 


\begin{figure}[h!]
	\centering
	\begin{subfigure}[b]{0.3\textwidth}
		\centering
		\includegraphics[scale = 0.45]{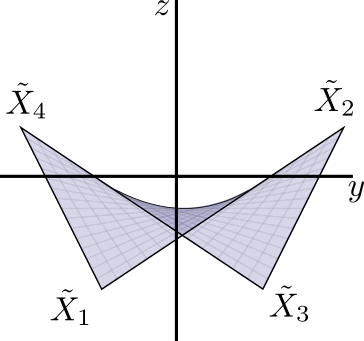}
	\end{subfigure}
	~
	\begin{subfigure}[b]{0.3\textwidth}
		\centering
		\includegraphics[scale = 0.45]{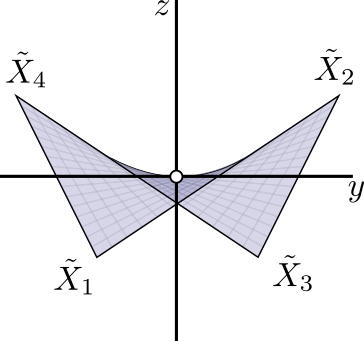}
	\end{subfigure}
	~
	\begin{subfigure}[b]{0.3\textwidth}
		\centering
		\includegraphics[scale = 0.45]{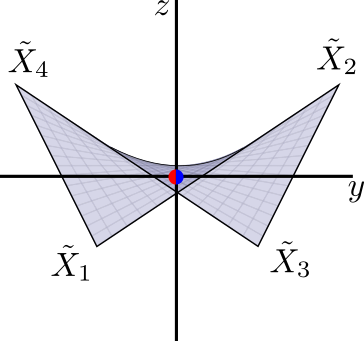}
	\end{subfigure}\\
	\vskip2pt
	\centering
	\begin{subfigure}[b]{0.3\textwidth}
		\centering
		\includegraphics[scale = 0.41]{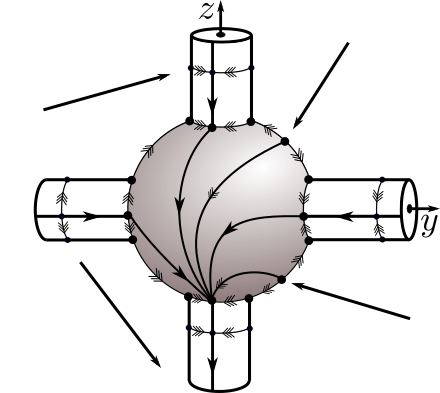}
		\caption{}
	\end{subfigure}
	~
	\begin{subfigure}[b]{0.3\textwidth}
		\centering
		\includegraphics[scale = 0.41]{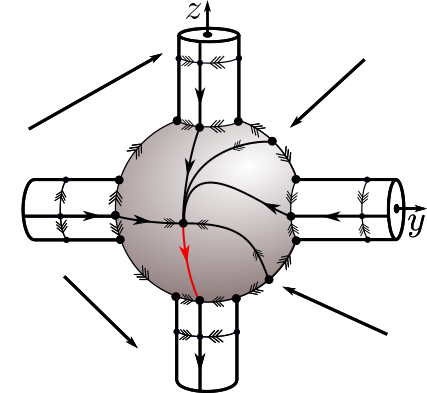}
		\caption{}
	\end{subfigure}
	~
	\begin{subfigure}[b]{0.3\textwidth}
		\centering
		\includegraphics[scale = 0.41]{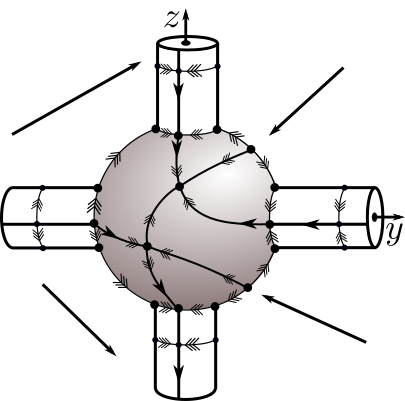}
		\caption{}
	\end{subfigure}
	~
	\caption{Appearance of a pair of sliding vector fields by entering $\tilde{\mathcal{S}}_n$ through $\mathcal{L}_p$. (a) For $x<x_b$, the origin  $(y,z) = (0,0)$ lies outside $\tilde{\mathcal{S}}$ and no sliding vector field exists. (b) For $x=x_b$, the origin $(y,z) = (0,0)$ lies on $\mathcal{L}_p$ and a nonhyperbolic equilibrium point of the fast subsystem emanates, corresponding to a unique sliding vector field. (c) For $x>x_b$, the origin $(y,z) = (0,0)$ lies inside $\tilde{\mathcal{S}}_n$ and the fast subsystem has two hyperbolic equilibrium points, one of saddle and one of node/focus type. Therefore, a pair of sliding vector fields exist, with respective types of stability.}
	\figlab{Lp}
\end{figure}

\begin{figure}[h!]
	\centering
	\begin{subfigure}[b]{0.4\textwidth}
		\centering
		\includegraphics[scale = 0.45]{sn-bif1b.png}
	\end{subfigure}
	~
	\begin{subfigure}[b]{0.48\textwidth}
		\centering
		\includegraphics[scale = 0.45]{sn-bif1b.png}
	\end{subfigure}\\
	\vskip2pt
	\centering
	\begin{subfigure}[b]{0.4\textwidth}
		\centering
		\includegraphics[scale = 0.41]{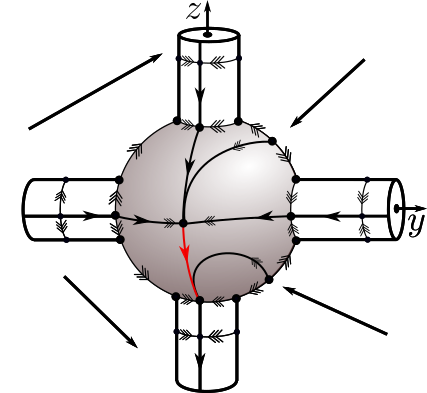}
		\caption{}
	\end{subfigure}
	~
	\begin{subfigure}[b]{0.48\textwidth}
		\centering
		\includegraphics[scale = 0.41]{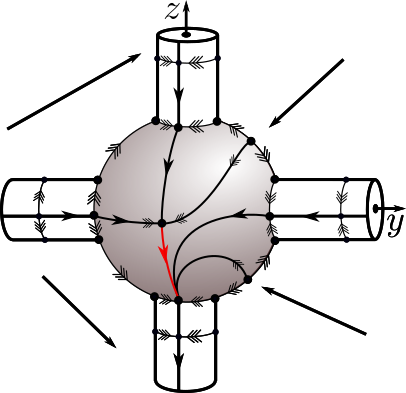}
		\caption{}
	\end{subfigure}
	\caption{The bifurcation in \figref{Lp}(b) comes in different generic form, shown in (b). The case (a) is at the boundary of these two. Here the unique center manifold coming from $\Pi_4$ coincides on one side with the strong stable manifold of the saddle-node.  }
	\figlab{LpNew}
\end{figure}

\subsection{Global dynamics and (non)uniqueness of solutions}
From a PWS perspective, the system in \figref{Lp} (c) with multiple sliding vectors along $\Lambda$ is ill-posed. Which sliding vector should one follow? But by regularization and blowup we can resolve this by replacing the Filippov system with the well-defined limit of solutions as the regularized system approaches the PWS one ($\epsilon\rightarrow 0$). Indeed, the stable manifold of the saddle produces a $2D$ separatrix in the full $3D$ space (the stable manifold of the saddle-type critical manifold). This manifold separates things above, that reaches the node, from things below that eventually follow $\Pi_4$. Along the node, we have reduced flow described by the associated sliding vector-field. The separatrix itself produces a canard phenomenon for $0<\varepsilon\ll 1$ where an exponentially thin set $(\mathcal O(e^{-c/\varepsilon})$) of initial conditions follows the sliding vector-field corresponding to the reduced problem along the saddle for an extended period of time. Away from the stable manifold, the forward flow is well-defined for $\epsilon\rightarrow 0$ and single valued. There is no nonuniquness of forward orbits in the PWS limit of the regularization. 

In the $(x,\hat y,\hat z)$-variables, the saddle-node bifurcation in \figref{Lp}(b) means that the critical manifold has a folded structure. See \figref{foldedC0}. Here Fenichel's theory breaks down. However, in the generic case, where the sliding vector-field does not vanish at the fold, there is only one orbit (red \figref{Lp}(b)) of the system for $\varepsilon=0$ that leaves this point in forward time. This orbit is the one-sided unstable manifold of the saddle-node that reaches $\Pi_4$ in forward time. Also, by reduction to a $2D$ center manifold near the fold it follows from general results in \protect\cite{krupa_extending_2001} that the system with $0<\varepsilon\ll 1$ will follow this unique forward orbit for $\varepsilon$ and therefore, once reaching $\Pi_4$, it can be approximated by the stable sliding along $\Pi_4$. Again, the forward flow is well-defined for $\epsilon\rightarrow 0$ and there is no nonuniquness of forward orbits in the PWS limit.

On the other hand, if the sliding vector-field vanishes at the fold, then generically there are several forward orbits of the $\varepsilon=0$-system that leave the fold following the saddle part of $C_0$, see \figref{foldedC0}. This situation is seen in \figref{foldedC0} using orbits of different colour. This situation produces a canard explosion. 
\begin{theorem}\thmlab{canards}
Let $\phi$ and $\psi$ be fixed regularization functions. Then canard explosions are generic (nondegenericity conditions depending upon $X_{1-4}$ only are stated in (a),(b) and (c) below) for regularizations $X_\epsilon$ \eqref{reg3a} of PWS systems \eqref{X14} depending on a single unfolding parameter. The canard point for $\epsilon\rightarrow 0$ is independent of $\phi$ and $\psi$. 
\end{theorem}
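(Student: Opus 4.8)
The plan is to localize the regularized system near a fold point of the critical manifold $C_0$ and reduce it, by a center manifold argument, to the planar canard normal form of Krupa and Szmolyan \cite{krupa_extending_2001}, from which the existence and $\varepsilon$-asymptotics of the canard explosion follow. First I would note, using \propref{sigmas}, that the fold set of $C_0$ --- viewed as a critical manifold of the layer problem \eqref{layprob} --- is exactly the locus where $\det\textbf{D}\tilde{F}_x=0$, equivalently $\Delta=0$, i.e. where the two solution branches $(\sigma_\psi^\pm,\sigma_\phi^\pm)$ coalesce; this is the parabolic line $\mathcal{L}_p$ introduced in \secref{para} (cf. \figref{Lp}(b)). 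Writing $X_{1-4}=X_{1-4}(\,\cdot\,;\mu)$ for the assumed one-parameter family, the equation $\Delta(x;\mu)=0$ generically cuts out a curve in the $(x,\mu)$-plane, and I would fix a point $p^{*}=(x^{*},\hat{y}^{*},\hat{z}^{*})\in C_0$ on it, at the parameter value $\mu^{*}$. At $p^{*}$ the Jacobian $\textbf{J}$ of \eqref{Jac} has a simple zero eigenvalue; the first nondegeneracy condition (a) is that its remaining eigenvalue is nonzero --- a condition on $\textbf{D}\tilde{F}_x$ at $p^{*}$, hence on $X_{1-4}$ only --- so that near $p^{*}$ the complementary fast direction of $C_0$ is uniformly normally hyperbolic (attracting on the node side, cf. \figref{foldedC0}).

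Next I would apply the center manifold theorem to the fast system \eqref{SFFNew} augmented by $\dot\mu=\dot\varepsilon=0$. This yields a locally invariant center manifold $W^{c}$ through $p^{*}$, smooth in $(\mu,\varepsilon)$, on which the hyperbolic fast direction has been eliminated, leaving a planar slow--fast system in one fast center coordinate $u$ and the slow coordinate $x$, with a folded critical curve $\gamma_0=C_0\cap W^{c}$. The second nondegeneracy condition (b) is that this fold is quadratic, i.e. the restriction of $\tilde{F}_x$ to the center direction has nonvanishing second derivative at $p^{*}$; by the bilinearity of $\tilde{F}_x$ this unwinds to an explicit transversality condition between the conic $\{\tilde{F}_x=0\}$ in the $(\sigma_\psi,\sigma_\phi)$-square and the degeneracy locus $\{\Delta=0\}$, again involving $X_{1-4}$ only. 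By \thmref{slideEquiv} and \eqref{c0dyn} the reduced flow on $\gamma_0$ is, up to the diffeomorphism $\Phi$ of \thmref{holy}, the Filippov sliding vector field $X^{sl}$; hence the hypothesis ``the sliding vector field vanishes at the fold'' says precisely that $p^{*}$ is a \emph{canard point} of the planar system on $W^{c}$. I would then bring this planar system into the Krupa--Szmolyan form
\begin{align*}
\dot u &= -v\,h_1(u,v,\lambda,\varepsilon)+u^{2}h_2(u,v,\lambda,\varepsilon)+\varepsilon\,h_3(u,v,\lambda,\varepsilon),\\
\dot v &= \varepsilon\bigl(u\,h_4(u,v,\lambda,\varepsilon)-\lambda\,h_5(u,v,\lambda,\varepsilon)+v\,h_6(u,v,\lambda,\varepsilon)\bigr),
\end{align*}
where $h_i>0$ at the origin and $\lambda=\lambda(\mu)$ is the rescaled unfolding parameter, by the standard chain of smooth coordinate changes straightening the fold and the fast fibration. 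The third nondegeneracy condition (c) then consists of $\lambda'(\mu^{*})\neq 0$ (so that $\mu$ moves the canard point transversally) together with $a_1\neq 0$, where $a_1$ is the Krupa--Szmolyan invariant assembled from the one-jets of the $h_i$, i.e. ultimately from the one-jet of $X_{1-4}$ at $x^{*}$.

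With the normal form in hand and (a)--(c) in force, the conclusion is quoted directly from \cite{krupa_extending_2001}: there is a $C^{\infty}$ function $\mu_c(\sqrt\varepsilon)=\mu^{*}+\mathcal{O}(\sqrt\varepsilon)$ and, for $|\mu-\mu_c(\sqrt\varepsilon)|$ exponentially small, a branch of canard cycles whose amplitude grows from $\mathcal{O}(\sqrt\varepsilon)$ to $\mathcal{O}(1)$ as $\mu$ traverses this exponentially thin interval; these cycles are exactly the orbits of the $\varepsilon=0$ system that follow the saddle part of $C_0$ out of the fold, as in \figref{foldedC0}. For the last sentence of the theorem I would observe that the defining relations of the canard point at $\varepsilon=0$ --- namely $\Delta(x;\mu)=0$ (the fold), $X^{sl}(x;\mu)=0$ (vanishing slow flow), the quadratic-fold condition and $a_1\neq 0$ --- are all expressed, through \propref{sigmas} and \eqref{c0dyn}, purely in terms of $(\sigma_\psi,\sigma_\phi,x,\mu)$ and the components $\alpha_i,\beta_i,\gamma_i$, with no reference to $\psi$ or $\phi$; hence $\mu^{*}$ and the location $x^{*}$ of the canard point on $\Lambda$ are independent of the choice of regularization functions --- only its lift $\hat{y}^{*}=\psi^{-1}(\psi_*(x^{*}))$, $\hat{z}^{*}=\phi^{-1}(\phi_*(x^{*}))$ into the blown-up space depends on $\psi,\phi$.

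The step I expect to be the main obstacle is the center manifold reduction together with the verification that the resulting planar vector field genuinely satisfies the Krupa--Szmolyan hypotheses: one has to track how $\textbf{D}\tilde{F}_x$, the matrix $\textbf{P}$ and the bilinear structure of $\tilde{F}_x$ combine so that (a)--(c) emerge as clean conditions on $X_{1-4}$, and one has to ensure that the complementary fast direction stays uniformly hyperbolic on a whole neighborhood of the fold so that Fenichel theory applies away from $p^{*}$ and glues to the local canard analysis. Everything past that point is an application of \cite{krupa_extending_2001}.
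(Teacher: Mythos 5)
Your proposal is correct and follows essentially the same route as the paper: a center-manifold reduction at a fold of $C_0$ (your identification of the fold set with $\Delta=0$, i.e.\ the parabolic line, matches the paper's \figref{Lp}(b) and \figref{foldedC0}), followed by an appeal to the Krupa--Szmolyan canard-explosion theorem, with the independence of the canard point from $\psi,\phi$ coming from the fact that the fold condition and the vanishing of the sliding vector field are expressed through $(\sigma_\psi,\sigma_\phi)$ and $X_{1-4}$ alone. The only slip is bibliographic: the canard-explosion statement you need is \protect\cite[Theorem 3.3]{krupa_relaxation_2001} (the relaxation-oscillation/canard-explosion paper), not \protect\cite{krupa_extending_2001}, and the paper phrases its nondegeneracy conditions (b),(c) as transversality of the $x$-nullcline $\mathcal N_x$ with $C_0$ at the fold and nonzero speed of the unfolding across $\mathcal N_x$, which repackages the same requirements you impose via the quadratic fold and $\lambda'(\mu^*)\neq 0$.
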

\begin{proof}
Suppose that 
\begin{itemize}
 \item[(a)] the linearization at the fold point has only one non-zero eigenvalue for $\epsilon=0$.
\end{itemize}
Then by Center Manifold Theory we can therefore reduce the situation in \figref{foldedC0} to a local $2D$ center manifold where $C_0$ is a quadratic, $1D$, folded curve of equilibria for $\epsilon=0$. This is precisely the situation studied in e.g. \protect\cite{krupa_relaxation_2001} and the existence of canard explosions therefore follow from  \protect\cite[Theorem 3.3]{krupa_relaxation_2001}. The canard point is for $\epsilon=0$ just determined by the condition that the $x$-nullcline, say $\mathcal N_x$, a $2D$ surface, intersects the curve $C_0$ at the fold. It therefore follows that the canard point for $\epsilon=0$ only depends upon $X_{1-4}$. The nondegenericity conditions of \protect\cite[Theorem 3.3]{krupa_relaxation_2001} are satisfied if 
\begin{itemize}
 \item[(b)]  the intersection of $\mathcal N_x$ with the critical $C_0$ is transverse at the fold,
\end{itemize}
and if 
\begin{itemize}
 \item[(c)] the $1$-parameter unfolding of $X_{1-4}$ transverses $\mathcal N_x$ along $C_0$ with non-zero speed.
\end{itemize}
%
\end{proof}

We discuss this result further in the conclusion.

\begin{figure}[h!]
	\centering
\includegraphics[scale = 0.35]{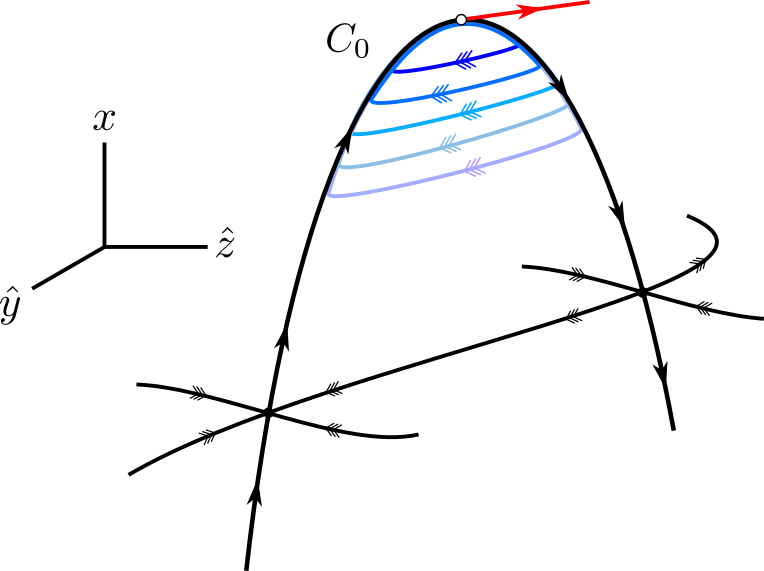}
\caption{Folded critical manifold appearing as a result of the sliding bifurcation \figref{Lp}(b) where $\tilde{\mathcal S}$ intersects $(y,z)=(0,0)$ along the parabolic line. Here we have drawn the case where the reduced flow has an equilibrium at the fold. The codimension-$1$ parameter unfolding of this situation with $0<\varepsilon\ll 1$ produces the canard explosion phenomenon where limit cycles follow the singular cycles indicated in the figure using different colours in an exponentially thin region of parameter space.}
\figlab{foldedC0}
\end{figure}

\section{Conclusion}\seclab{conclusion}
In this paper, we presented general results on the existence and multiplicity of sliding along a {codimension-2 intersection of two codimension-1} switching manifolds (see \secref{existence}, \propref{propdef} to \propref{conv-double}). Our approach was geometric and based upon studying the canopy of \protect\cite{jeffrey2014a}. By defining sliding through a regularization of the PWS system, we also introduced a concept of stability of sliding. We presented some general results on stability, most importantly showing (see \thmref{uniqueStable}) that there can be at most one stable sliding vector-field; when sliding vectors co-exists then one has to be of saddle type while the other sliding vector corresponds to a focus/node or center. Unfortunately, the downside of this definition of stability is that it generically depends upon the regularization functions used. Different regularizations may produce different stability of the focus/node or center; the result does not only depend upon the prescribed vector-fields $X_{1-4}$. We emphasize that another regularization approach to the problem would be to replace $\psi(\varepsilon^{-1} y)$ by $\psi(\delta^{-1} y)$ in \eqref{reg3a} and consider a regularization $X_{\delta,\varepsilon}$ depending on two small parameters $\varepsilon\ll 1$ and $\delta\ll 1$. This would produce different results from ours when $\delta\ll \varepsilon\ll 1$ (or  $\varepsilon \ll \delta\ll 1$).

In \secref{bifurcation} we demonstrated a blowup approach for the study of bifurcations of sliding. We showed how this approach could be used, together with the canopy surface $\mathcal S$ and its projection $\tilde{ \mathcal S}$, to analyse the emergence and disappearance of sliding and its global consequences. We focused on two specific cases. In the first case, recall \figref{sh-out} (b),  sliding along $\Lambda$ disappears because of two adjacent projected vectors $\tilde X_i$ and $\tilde X_{i+1}$ becoming anti-parallel. In terms of the projected area $\tilde{\mathcal S}$, this means that the origin intersects $\mathcal S$ along one of the straight edges. In the second case, recall \figref{Lp} (b), sliding along $\Lambda$ disappears in a way which is less apparent in terms of the PWS system. But in terms of the blowup system, the bifurcation appears as a saddle-node in the layer problem. Furthermore, for the set $\tilde{\mathcal S}$, the bifurcation occurs exactly when the origin is along the parabolic line. These two examples are generic in $\mathbb R^3$. It is $x$ that unfolds the bifurcations and these ``PWS bifurcations'' therefore replace the generic folds from classical PWS system along codimension-1 discontinuity sets. 
However, the bifurcation in \figref{Lp} (b) is also a fold of a critical manifold and this situation can therefore, under variation of one single unfolding parameter, produce canard explosions of limit cycles. We collected this result in \thmref{canards}. Notice that the canard point is independent of the regularization functions. This example demonstrates that, although we may have forward nonuniqueness of our $\varepsilon=0$-system, we may use more complicated results from geometric singular perturbation theory or simply blowup again, see e.g. \protect\cite{KristiansenHogan2018},  to capture a well-defined PWS limit of our regularization and in this way obtain ``higher order corrections'' to the PWS system. Some of the qualitative details of this approach will, however, in general depend on the regularization functions. 

Our analysis was presented for the case of PWS systems in $\mathbb{R}^3$ with $\Lambda$ being $1D$. In particular, $x\in \mathbb R$. But, since $x$ is parameter of the layer problem \eqref{layprob}, all results in \secref{existence} and \secref{stabsec} also apply to the case of PWS systems in $\mathbb{R}^n$ with $x\in \mathbb R^{n-2}$. The result in \thmref{canards}, however, only applies to $x\in \mathbb R$. In higher dimensions canards are generic, like canards in slow-fast systems in $\mathbb R^3$, see \protect\cite{szmolyan_canards_2001}. Interestingly, there are also different canards of the regularization $X_\epsilon$ of $X_{1-4}$ even in $\mathbb R^3$ that connect stable sliding along $\Pi_i$ with unstable sliding along $\Pi_j$ with $j\ne i$. These canards could be a direction for future research.

Another possible direction for future work would be to study the canard explosion phenomena in \thmref{canards} and the associated global dynamics in further details, in particular describing possible examples of relaxation oscillations that are produced by the explosion of the small, local limit cycles near $\Lambda$. Also, from a modelling perspective, one may view the fact that the stability of sliding depends upon the regularization function as lack of sufficient modelling. Knowing $X_{1-4}$ is not enough to determine the outcome of the system. In this regard, it would therefore be interesting to further classify all of the cases where the result is independent of the regularization function, and hence where additional modelling is not required. An example of such a case is shown in \figref{proj1} (a). This gives an attracting focus/node for every regularization function. In contrast, it is known that different regularization functions can change the stability of certain focus/node/center sliding vectors. Therefore Hopf bifurcations can be produced in this way. The emerging limit cycles therefore produce normally hyperbolic invariant (for the layer problem \eqref{layprob}) cylinders for $\varepsilon=0$. The reduced problem on such a manifold, see \protect\cite{fen3}, also defines a ``sliding vector-field'' $\Lambda$ upon projection $(x,\hat y,\hat z)\mapsto x$. To our knowledge, such ``sliding vectors'' have not been studied before. Finally, in this manuscript we only described the simplest possible bifurcation scenarios in \secref{bifurcation}. Other interesting bifurcations can also occur (for example when the parabolic line disappears) which require further analysis. 

\section*{Acknowledgements}\seclab{aknow}
The main results of this paper were obtained during the first author's affiliation with the Technical University of Denmark as an M.Sc. student (Jan-Aug 2017) and subsequently as a Research Assistant (Nov-Dec 2017) at the Department of Applied Mathematics \& Computer Science, under the supervision of the second author. The authors would like to thank John Hogan for critically reading previous versions of the manuscript and then providing constructive feedback.

\appendix 
\section{Calculations in charts}\applab{aa}
The results in \secref{bifurcation}, presented in \figref{sh-out}, \figref{sn-sh} and \figref{Lp}, are based upon calculations done in directional charts obtained by central projections. For example, setting $\bar z=1$ in \eqref{blowup1} gives
\begin{align}
 (x,y,z,\varepsilon) = (x,r_1 y_1, r_1, r_1\varepsilon_1), \eqlab{chartZBar1}
\end{align}
in local coordinates $(r_1,y_1, \varepsilon_1)$ defined by $r_1 = r\bar z$ and
\begin{align}
 (\bar y,\bar z,\bar \varepsilon ) \mapsto (y_1,\varepsilon_1) = \bar z^{-1} (\bar y,\bar \varepsilon).\eqlab{chartapp}
\end{align}
By \eqref{hatyhatz}
we can therefore change coordinates between $(\hat y,\hat z)$ and $(y_1,\varepsilon_1)$ as follows
\begin{align*}
 y_1 = \hat z^{-1} \hat z,\,\varepsilon_1 = \hat z^{-1}.
\end{align*}
Inserting \eqref{chartZBar1} into $((x,y,z)'=X_\epsilon,\epsilon'=0)$ gives the following system
 \begin{eqnarray}
\begin{aligned}
\dot{x}= &\frac{r_1}{2}\lp \frac{\alpha_1}{2}{\lp 1+\phi_+\lp \varepsilon_1\rp \rp}+\frac{\alpha_4}{2}{\lp 1-\phi_+\lp \varepsilon_1\rp \rp}\rp{\lp 1+\psi\lp \varepsilon_1^{-1}y_1\rp\rp}\\
&+\frac{r_1}{2}\lp \frac{\alpha_2}{2}{\lp 1+\phi_+\lp \varepsilon_1\rp \rp}+\frac{\alpha_3}{2}{\lp 1-\phi_+\lp \varepsilon_1\rp \rp}\rp{\lp 1-\psi\lp \varepsilon_1^{-1}y_1\rp\rp},\\
\dot{r}_1= &\frac{r_1}{2}\lp \frac{\gamma_1}{2}{\lp 1+\phi_+\lp \varepsilon_1\rp \rp}+\frac{\gamma_4}{2}{\lp 1-\phi_+\lp \varepsilon_1\rp \rp}\rp{\lp 1+\psi\lp \varepsilon_1^{-1}y_1\rp\rp}\\
&+\frac{r_1}{2}\lp \frac{\gamma_2}{2}{\lp 1+\phi_+\lp \varepsilon_1\rp \rp}+\frac{\gamma_3}{2}{\lp 1-\phi_+\lp \varepsilon_1\rp \rp}\rp{\lp 1-\psi\lp \varepsilon_1^{-1}y_1\rp\rp},\\
\dot{y}_1= &\frac{1}{2}\lp \frac{\beta_1}{2}{\lp 1+\phi_+\lp \varepsilon_1\rp \rp}+\frac{\beta_4}{2}{\lp 1-\phi_+\lp \varepsilon_1\rp \rp}\rp{\lp 1+\psi\lp \varepsilon_1^{-1}y_1\rp\rp}\\
&+\frac{1}{2}\lp \frac{\beta_2}{2}{\lp 1+\phi_+\lp \varepsilon_1\rp \rp}+\frac{\beta_3}{2}{\lp 1-\phi_+\lp \varepsilon_1\rp \rp}\rp{\lp 1-\psi\lp \varepsilon_1^{-1}y_1\rp\rp}\\
&-\frac{y_1}{2}\lp \frac{\gamma_1}{2}{\lp 1+\phi_+\lp \varepsilon_1\rp \rp}+\frac{\gamma_4}{2}{\lp 1-\phi_+\lp \varepsilon_1\rp \rp}\rp{\lp 1+\psi\lp \varepsilon_1^{-1}y_1\rp\rp}\\
&-\frac{y_1}{2}\lp \frac{\gamma_2}{2}{\lp 1+\phi_+\lp \varepsilon_1\rp \rp}+\frac{\gamma_3}{2}{\lp 1-\phi_+\lp \varepsilon_1\rp \rp}\rp{\lp 1-\psi\lp \varepsilon_1^{-1}y_1\rp\rp},\\
\dot{\varepsilon}_1 &=-\frac{\varepsilon_1}{2}\lp \frac{\gamma_1}{2}{\lp 1+\phi_+\lp \varepsilon_1\rp \rp}+\frac{\gamma_4}{2}{\lp 1-\phi_+\lp \varepsilon_1\rp \rp}\rp{\lp 1+\psi\lp \varepsilon_1^{-1}y_1\rp\rp}\\
&-\frac{\varepsilon_1}{2}\lp \frac{\gamma_2}{2}{\lp 1+\phi_+\lp \varepsilon_1\rp \rp}+\frac{\gamma_3}{2}{\lp 1-\phi_+\lp \varepsilon_1\rp \rp}\rp{\lp 1-\psi\lp \varepsilon_1^{-1}y_1\rp\rp},
\end{aligned}
\eqlab{here}
\end{eqnarray}
using \eqsref{RegSysS}{phipm},
after multiplication of the right hand side by $r_1$. 
By \eqref{chartapp}, \eqref{blowup2} becomes
\begin{align}
 (y_1,\varepsilon_1 ) = \rho (\bar{\bar y},\bar{\bar \epsilon}), \eqlab{blowup2App}
\end{align}
in the $(y_1, \varepsilon_1)$-coordinates.
Setting $\bar{\bar \epsilon}=1$ here gives
\begin{align*}
 (y_1,\varepsilon_1 ) = (\rho_1 y_{11},\rho_1),
\end{align*}
in new local coordinates $(\rho_1,y_{11})$. 
Therefore, in total
\begin{align*}
  y &=r_1 \rho_1 y_{11},\\
  z &=r_1,\\
 \epsilon &=r_1 \rho_1,
\end{align*}
 using  \eqref{chartZBar1}.
By eliminating $r_1$ and $\rho_1$, we simply obtain $y=\epsilon \hat y$, which is just \eqref{yhat}, and therefore also the equations in \eqref{SFF}. 
Then by \thmref{holy} we therefore obtain a normally hyperbolic critical manifold within this chart whenever the corresponding PWS system $(X_1,X_4)$ have sliding along $\Pi_1$. 
The advantages of using the coordinates $(r_1,\rho_1,y_{11})$, however, is that $C_0$ is normally hyperbolic all the way up to $r_1= 0$. This enables an extension of $C_0$ onto $\mathcal I\times S^2$, the blowup of $\Lambda$, as a local center manifold. This is the typical advantage of the blowup method, see also \protect\cite{krupa_extending_2001,krupa_extending2_2001,kuehn2015,KristiansenHogan2015,krihog2} where this approach is used in different contexts. Also, if the sliding flow is nonvanishing then the direction of the flow on the local center manifold is in the same direction, see e.g. the (nonunique) center manifold (in red in \figref{Lp} (b)) of the partially hyperbolic equilibrium on the blowup of $\Pi_4$: The dynamics on the local center manifold has $\dot{\hat y}<0$, in correspondence with $\dot y<0$ on the sliding flow along $\Pi_4$. 

If we instead put $\bar{\bar y}=1$ in \eqref{blowup2App} then 
\begin{align*}
 (y_1,\varepsilon_1 ) = (\rho_2,\rho_2 \varepsilon_{12}).
\end{align*}
in new local coordinates $(\rho_2,\varepsilon_{12})$. 
Inserting this into \eqref{here} gives the following equations
\begin{eqnarray}
\begin{aligned}
\dot{x}= &\frac{r_1\rho_2}{2}\lp \frac{\alpha_1}{2}{\lp 1+\phi\lp \rho_2 \varepsilon_{12}\rp\rp}+\frac{\alpha_4}{2}{\lp 1-\phi\lp \rho_2 \varepsilon_{12}\rp\rp}\rp{\lp 1+\psi_+\lp \varepsilon_{12}\rp \rp}\\
&+\frac{r_1\rho_2}{2}\lp \frac{\alpha_2}{2}{\lp 1+\phi\lp \rho_2 \varepsilon_{12}\rp\rp}+\frac{\alpha_3}{2}{\lp 1-\phi\lp\rho_2^{-1} \varepsilon_{12}^{-1}\rp\rp}\rp{\lp 1-\psi_+\lp \varepsilon_{12}\rp \rp},\\
\dot{r}_1= &\frac{r_1\rho_2 }{2}\lp \frac{\gamma_1}{2}{\lp 1+\phi\lp \rho_2 \varepsilon_{12}\rp\rp}+\frac{\gamma_4}{2}{\lp 1-\phi\lp \rho_2 \varepsilon_{12}\rp\rp}\rp{\lp 1+\psi_+\lp \varepsilon_{12}\rp \rp}\\
&+\frac{r_1\rho_2}{2}\lp \frac{\gamma_2}{2}{\lp 1+\phi\lp \rho_2 \varepsilon_{12}\rp\rp}+\frac{\gamma_3}{2}{\lp 1-\phi\lp \rho_2 \varepsilon_{12}\rp\rp}\rp{\lp 1-\psi_+\lp \varepsilon_{12}\rp \rp},\\
\dot{\rho}_2= &\frac{\rho_2}{2}\lp \frac{\beta_1}{2}{\lp 1+\phi\lp \rho_2 \varepsilon_{12}\rp\rp}+\frac{\beta_4}{2}{\lp 1-\phi_+\lp \varepsilon_1\rp \rp}\rp{\lp 1+\psi\lp \varepsilon_1^{-1}y_1\rp\rp}\\
&+\frac{\rho_2}{2}\lp \frac{\beta_2}{2}{\lp 1+\phi\lp \rho_2 \varepsilon_{12}\rp\rp}+\frac{\beta_3}{2}{\lp 1-\phi\lp \rho_2 \varepsilon_{12}\rp\rp}\rp{\lp 1-\psi_+\lp \varepsilon_{12}\rp \rp}\\
&-\frac{\rho_2^2}{2}\lp \frac{\gamma_1}{2}{\lp 1+\phi\lp \rho_2 \varepsilon_{12}\rp\rp}+\frac{\gamma_4}{2}{\lp 1-\phi\lp \rho_2 \varepsilon_{12}\rp\rp}\rp{\lp 1+\psi_+\lp \varepsilon_{12}\rp \rp}\\
&-\frac{\rho_2^2}{2}\lp \frac{\gamma_2}{2}{\lp 1+\phi\lp \rho_2 \varepsilon_{12}\rp\rp}+\frac{\gamma_3}{2}{\lp 1-\phi\lp \rho_2 \varepsilon_{12}\rp\rp}\rp{\lp 1-\psi_+\lp \varepsilon_{12}\rp \rp},\\
\dot{\varepsilon}_{12} &=-\frac{\varepsilon_{12}}{2}\lp \frac{\beta_1}{2}{\lp 1+\phi_+\lp \varepsilon_1\rp \rp}+\frac{\beta_4}{2}{\lp 1-\phi\lp \rho_2 \varepsilon_{12}\rp\rp}\rp{\lp 1+\psi_+\lp \varepsilon_{12}\rp \rp}\\
&-\frac{\varepsilon_{12}}{2}\lp \frac{\beta_2}{2}{\lp 1+\phi\lp \rho_2 \varepsilon_{12}\rp\rp}+\frac{\beta_3}{2}{\lp 1-\phi\lp \rho_2 \varepsilon_{12}\rp\rp}\rp{\lp 1-\psi_+\lp \varepsilon_{12}\rp \rp},
\end{aligned}\eqlab{here2}
\end{eqnarray}
after multiplication of $\rho_2$ on the right hand side. Here we have again used \eqref{phipm} to introduce $\psi_+(s) = \psi(s^{-1})$, $s\ge 0$. 
Now, $r_1=\varepsilon_{12}=0$ corresponds to the subset  of the equator $\bar \epsilon=1$ of the sphere in \figref{LambdaBlowup} with $\bar y>0,\bar z>0$. This is an invariant set for \eqref{here2} with the following dynamics: $\dot x=0$ and 
\begin{align}
  \dot \rho_2 &= {\rho_2} \left(\beta_1 -{\rho_2} \gamma_1\right). \eqlab{rho2Eqn}
\end{align}
Notice, that in \figref{sh-out} (a) for example, $\beta_1<0$ and $\gamma_1<0$ and therefore there exists two hyperbolic equilibrium for \eqref{rho2Eqn} at $\rho_2=0$ (stable, green in \figref{sh-out} (a)) and $\rho_2=\gamma^{-1} \beta_1$ (unstable, yellow in \figref{sh-out} (a)). By linearization of the full system \eqref{here2} about any point $\rho_2=\gamma(x)^{-1}\beta_1(x),r_1=0,\varepsilon_{12}=0,x\in \mathcal I$, we obtain an additional positive eigenvalue $\beta_1(x)$ with associated eigenvector contained in the $(\rho_2,\epsilon_{12})$-plane and notice that $\rho_2 = \gamma_1(x)^{-1} \beta_1(x)$, $r_1\ge 0$, $\epsilon_{12}=0,x\in \mathcal I$ is a $2D$ stable manifold of the curve $\rho_2=\gamma(x)^{-1} \beta_1(x), x\in \mathcal I$ of equilibrium points. For the PWS system, this invariant manifold corresponds to all the points in $\mathcal Q_1$ that reach $\Lambda$ by following $X_1$. Linearization about $\rho_2=0,r_1=0,\varepsilon_{12}=0$ gives one single positive eigenvalue $(-\beta_1)$ with an associated eigenvector purely in the $\varepsilon_{12}$-direction. In fact, any point $\rho_2=\varepsilon_{12}=0$ with $r_1\ge 0$, $x\in \mathcal I$ is an equilibrium of \eqref{here2} having a $1D$ stable and a $1D$ unstable manifolds. 

We obtain similar results depending on the signs of $\beta_i$ and $\gamma_i$ along the equator $\bar \epsilon=1$ of the sphere in \figref{LambdaBlowup}. Together with a phase portrait analysis of the \eqref{layprob} we can then produce the results that are collected in the diagrams \figref{sh-out}, \figref{sn-sh} and \figref{Lp} in \secref{bifurcation} (albeit with some limitations on the global dynamics in \figref{sn-sh} and \figref{Lp} that are explained in the text).

\bibliography{refs}
\bibliographystyle{plain}

\newpage
\end{document}